\newcommand{\res}{\upharpoonright}
\newcommand{\M}{{\mathbb M}}
\newcommand{\Fraisse}{Fra{\"i}ss{\'e}}
\newcommand{\orient}[1]{\accentset{\rightharpoonup}{#1}}
\theoremstyle{plain}
\newtheorem{theorem}{Theorem}[section]
\newtheorem{corollary}{Corollary}[section]
\newtheorem{lemma}{Lemma}[section]
\newtheorem{proposition}{Proposition}[section]
\newtheorem{definition}{Definition}[section]
\theoremstyle{remark}
\newtheorem*{claim}{Claim}
\theoremstyle{definition}
\newtheorem*{construction}{Construction of a generic sequence}
\newtheorem*{property}{Lifting property for $\mathbb{M}$}
\begin{document}

\title[A combinatorial model for the Menger curve]{A combinatorial model for the Menger curve}

\author{Aristotelis Panagiotopoulos}
\address{Department of Mathematics, Caltech, 1200 E. California Blvd, MC 253-37
Pasadena, CA 91125}
\email{panagio@caltech.edu}
\urladdr{http://www.its.caltech.edu/~panagio/}

\author{S\l awomir Solecki}
\address{Department of Mathematics, Malott Hall, Cornell University, Ithaca, NY 14853}
\email{ssolecki@cornell.edu}
\urladdr{https://math.cornell.edu/slawomir-solecki}

\thanks{Research of Solecki supported by NSF grants DMS-1800680 and 1700426.}

\subjclass[2010]{03C30, 54F15}
\keywords{Projective \Fraisse{} limits, Menger curve, homogeneity, universality, homology Menger compactum}

\begin{abstract} 
We represent the universal Menger curve as the topological realization $|\mathbb{M}|$ of the projective Fra{\"i}ss{\'e} limit ${\mathbb M}$ of 
the class of all finite connected graphs.
We show that $\mathbb{M}$ satisfies combinatorial analogues of the Mayer--Oversteegen--Tymchatyn homogeneity theorem and 
the Anderson--Wilson projective universality theorem. Our arguments involve only $0$-dimensional topology and constructions on finite graphs. 
Using the topological realization $\mathbb{M}\mapsto|\mathbb{M}|$, we transfer some of these properties to the Menger curve: we prove 
the approximate projective homogeneity theorem, 
recover Anderson's finite homogeneity theorem, and  prove a variant of Anderson--Wilson's theorem. The finite homogeneity theorem is 
the first instance of an ``injective" homogeneity theorem 
being proved using the projective Fra{\"i}ss{\'e} method. 
We indicate how our approach to the Menger curve may extend to higher dimensions.  
\end{abstract}

\maketitle

\section*{Introduction}

The Menger curve is a $1$-dimensional Peano continuum that is classically extracted from the cube in the same way  that the Cantor space is extracted from the interval: subdivide $C_0=[0,1]^3$ into $3^3$ congruent subcubes; let $C_1$ be the union of these subcubes which intersect the one-skeleton of $[0,1]^3$; repeat this process on each subcube again and again to define $C_n$; the Menger curve is defined to be the intersection $\bigcap_n{}C_n$.
With this construction Menger found the first example of a universal space for the class of $1$-dimensional continua, that is, 
a $1$-dimensional continuum in which every other $1$-dimensional continuum embeds \cite{Mg}. 

\begin{figure}[ht!]
\centering
\includegraphics[scale=0.3]{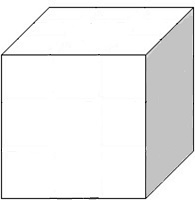} $\quad$ $\quad$ \includegraphics[scale=0.3]{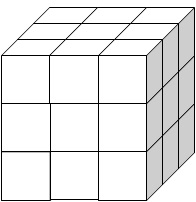} $\quad$ $\quad$ \includegraphics[scale=0.6]{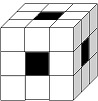}
\caption{From $C_0$ to $C_1$.}
\end{figure}

The Menger curve is a canonical continuum whose topological properties do not depend on the various geometric parameters of the 
above iterative process. In fact, many other constructions of universal $1$-dimensional continua (e.g.,  \cite{Lf,No}) which appeared soon after  \cite{Mg}  
were later shown to produce the same space; see \cite{An}.

In this paper, we develop a combinatorial model for the Menger curve using an analogue of projective \Fraisse{} theory from  \cite{IS}. The \emph{Menger prespace}  
$\mathbb{M}$ is a compact graph-structure on the Cantor space. In a sense,  $\mathbb{M}$ is the \emph{generic inverse limit} in the category  $\mathcal{C}$ of all \emph{connected epimorphisms} between \emph{finite connected graphs}. The edge relation on  $\mathbb{M}$ turns out to be an equivalence relation and the Menger curve is then defined to be the quotient $|\mathbb{M}|=\mathbb{M}/R$ of $\mathbb{M}$ with respect to this relation.

This definition of the Menger curve as the \emph{topological realization} $|\mathbb{M}|$  of the combinatorial object $\mathbb{M}$  has certain technical and foundational advantages. 
On the foundational side, the definition of $\mathbb{M}$ is  canonical since it is constructed through $\mathcal{C}$ without making any ad-hoc choices for the bonding maps. Moreover, the
definition of $|\mathbb{M}|$ is intrinsic,  in that it makes no reference to external spaces such as such as $[0,1]^3$. On the technical side, when proving results about the Menger curve, we can often replace various complications coming from $1$-dimensional topology of $|\mathbb{M}|$ with combinatorial problems about graphs in 
$\mathcal{C}$. 
Moreover, like any other projective \Fraisse{} limit, $\mathbb{M}$ has  the following \emph{projective extension property} built in by the construction:  for every $g\in\mathcal{C}$ and any connected epimorphism $f$ as in the diagram, there is a connected epimorphism $h$ with $g \circ h=f$.
\begin{center}
\medskip{}
\begin{tikzcd}[column sep=small]
\mathbb{M} \arrow[drrrr, dotted,  "h", swap] \arrow[rrrrrr, "f" ] & & & & & & A\\
& & & & B  \arrow[urr, "g", swap] & & 
\end{tikzcd}
\medskip{}
\end{center}
Having this universal property of $\mathbb{M}$ as our point of departure, and expanding on it using combinatorial properties of $\mathcal{C}$, 
we can integrate various aspects of the Menger curve into a unified theory as follows. 
\begin{itemize}

\item Anderson's homogeneity theorem \cite{An} states that any bijection between finite subsets of $|\mathbb{M}|$ extends to a global homeomorphism of $|\mathbb{M}|$. This theorem was later generalized in \cite{MOT} to the strongest possible homogeneity result for $|\mathbb{M}|$, namely, that every homeomorphism between locally non-separating, closed subsets of $|\mathbb{M}|$ extends to a global homeomorphism of $|\mathbb{M}|$. In Theorem \ref{T:2}, we prove a homogeneity result for ${\mathbb M}$ analogous to the homogeneity result for the Menger curve in \cite{MOT}. From that we recover Anderson's homogeneity result for $|\mathbb{M}|$. 
Our proof of Theorem \ref{T:2} relies on $\mathcal{C}$ being closed under a certain mapping cylinder construction. 

\item   Anderson--Wilson's projective universality theorem states that $|\mathbb{M}|$ admits an open, continuous, and connected map onto any Peano continuum\footnote{In this paper we use the newer term \emph{connected map} in place of the synonymous term \emph{monotone map} used in \cite{An2,Wi}.}. 
Moreover, all preimages of points under this map can be taken to be homeomorphic to the Menger curve  \cite{An2,Wi}. In Theorem \ref{T:preMengerProjUniversality}, we prove a combinatorial analogue of the Anderson--Wilson theorem for $\mathbb{M}$. In the process, we isolate a combinatorial property of $\mathcal{C}$ 
that is responsible for this strong form of projective universality. In Corollary \ref{C:MengerProjUniversality}, 
we  establish a variant of Anderson--Wilson's theorem for $|\mathbb{M}|$ where the map produced is weakly locally-connected instead of open. 

\item 
In Theorem \ref{T:ApproximateHomogeneityProperty}, we prove that $|\mathbb{M}|$ satisfies an approximate projective homogeneity property 
that is analogous to the property satisfied by many other continua presented as topological realizations of projective Fra{\"i}ss{\'e} limits; 
see \cite{BK} and \cite{IS} for examples. Namely, we show that 
if $\gamma_0,\gamma_1\colon |\mathbb{M}|\to X$ are continuous connected maps onto a Peano continuum $X$, 
then there is a sequence $(h_n)$ of homeomorphisms of  $|\mathbb{M}|$ so that $(\gamma_0\circ h_n)$ converges uniformly to $\gamma_1$.
\end{itemize}

It is worth mentioning that throughout Section \ref{S: Homogeneity} one can find analogies with the abstract homotopy theory in the spirit of model categories. 

Finally, pursuing an extension of this approach to higher dimensional Menger compacta, we define higher dimensional analogues of 
$\mathcal{C}$, $\mathbb{M}$, and 
$|\mathbb{M}|$. For every $n\in\{0,1,\ldots\}\cup\{\infty\}$, we define  $\mathcal{C}_n$ to be the class of all $(n-1)$-connected epimorphisms between finite, $n$-dimensional, 
$(n-1)$-connected simplicial complexes. We show that  $\mathcal{C}_n$ is a projective \Fraisse{} class. Interestingly, the same is shown to hold for the class 
$\widetilde{\mathcal{C}}_n$, which is defined by replacing ``$(n-1)$-connected'' with ``$(n-1)$-acyclic'' in the definition of $\mathcal{C}_n$. As far as we are aware, 
these ``homology $n$-Menger spaces'' introduced here---and for $n=\infty$ this ``homology Hilbert cube''---have not been considered before.

\tableofcontents

\section{The class $\mathcal{C}$ of finite connected graphs}\label{S:graphs}

Let $A$ be a set and let $R $ be any subset of  $A^2$. We say that $R$ is a {\bf reflexive} if $R(a,a)$ holds for all $a\in A$. We say that $R$ is {\bf symmetric} if for every $a,b\in A$ we have that  $R(a,b)$ implies $R(b,a)$. We finally say that  $R$ is {\bf transitive} if the conjunction of $R(a,b)$ and $R(b,c)$ implies $R(a,c)$.  By a {\bf graph} $(A,R^A)$,  simply denoted by $A$, we mean a set $A$ together with a specified subset $R^{A}$ of $A^2$ that is both reflexive and symmetric.
Notice that reflexivity makes our definition of a graph non-standard but it allows us to treat graphs as $1$-dimensional simplicial complexes. A {\bf clique} of a graph $(A,R^A)$ is any subset $C$ of $A$ with the property that for all $a,b\in C$ we have that $R^A(a,b)$.
A map $f\colon B\to A$ is a {\bf homomorphism} between graphs if it maps edges to edges, that is, if $R^{B}(b,b')$ implies $R^{B}(f(b),f(b'))$, for every $b,b'\in B$. A homomorphism $f$ is  an {\bf epimorphism} if it is moreover surjective on both vertices and edges. An isomorphism is an injective epimorphism. By a {\bf subgraph} of a graph we understand an induced subgraph.

We isolate a collection $\mathcal{C}$ of finite graphs together with special epimorphisms between them,  the point being, that various topological and dynamical properties of the Menger curve are reflections of combinatorial properties of $\mathcal{C}$. A subset $X$ of a finite graph $A$ is {\bf connected} if, for all non-empty $U_1, U_2\subseteq X$ with $X = U_1\cup U_2$, there exist $x_1\in U_1$ and $x_2\in U_2$ such that $R^A(x_1, x_2)$. A graph $A$ is {\bf connected} if the domain of $A$ is a connected subset. An epimorphism $f\colon B\to A$ is {\bf connected} if the preimage of each connected subset of $A$ is a connected subset of $B$.

\begin{definition} 
Let $\mathcal{C}$ be the category of all finite connected graphs with morphisms in ${\mathcal C}$ 
being connected epimorphisms. 
\end{definition}

Our first task is to establish that $\mathcal{C}$ is a projective \Fraisse{} class. Projective \Fraisse{} theory was developed in \cite{IS} in the more general setting of 
$\mathcal{L}$-structures. For the sake of perspective, we recall from \cite{IS} the \Fraisse{} class axioms in this more general setup. For the unfamiliar reader, we point out that a graph is just an example of an $\mathcal{L}$-structure where the language $\mathcal{L}$ consists of a binary relation symbol $R$. An important difference between the definition below and the one in \cite{IS} is that here, a \Fraisse{} class is allowed to consist of a strict subcollection of epimorphisms, e.g. only the epimorphisms which are connected.
   
Let $\mathcal{F}$ be a class of finite $\mathcal{L}$-structures with a fixed family of morphisms among the structures in $\mathcal{F}$. We assume that each morphism is an epimorphism with respect to $\mathcal L$. 
We say that $\mathcal F$ is a {\bf projective Fra{\"i}ss{\'e} class} if 
\begin{enumerate}
\item $\mathcal F$ is countable up to isomorphism, that is, any sub-collection of pairwise non-isomorphic structures of $\mathcal F$ is countable;
\item morphisms are closed under composition and each identity map is a morphism;
\item for $B, C\in {\mathcal F}$; 
there exist $D\in {\mathcal F}$ and morphisms $f\colon D\to B$ and $g\colon D\to C$; and
\item for every two morphisms $f\colon B\to A$ 
and $g\colon C\to A$, 
there exist morphisms $f'\colon D\to B$ and $g'\colon D\to C$ such that $f\circ f' = g\circ g'$. 
\end{enumerate}

We will refer to the last property above as the {\bf projective amalgamation property}.
We have the following theorem.
 
\begin{theorem}\label{T:1}
$\mathcal{C}$ is a projective Fra{\"i}ss{\'e} family.
\end{theorem}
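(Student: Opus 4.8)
The plan is to check the four axioms of a projective \Fraisse{} class for $\mathcal{C}$ directly from the definition, with the projective amalgamation property carrying essentially all of the content. Axiom (1) is immediate: up to isomorphism a finite graph is a reflexive symmetric relation on $\{1,\dots,n\}$, and there are only countably many of these. For axiom (2), the identity map is trivially a connected epimorphism, and if $f\colon C\to B$ and $g\colon B\to A$ are connected epimorphisms then $g\circ f$ is a homomorphism that is onto on vertices and onto on edges, and for connected $X\subseteq A$ one has $(g\circ f)^{-1}(X)=f^{-1}\big(g^{-1}(X)\big)$, which is connected since $g^{-1}(X)$ is connected in $B$ and hence its $f$-preimage is connected in $C$. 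For axiom (3) I would first observe that the one-vertex graph $\mathbf{1}$ (a single point with its reflexive loop) lies in $\mathcal{C}$ and that, for every $A\in\mathcal{C}$, the constant map $A\to\mathbf{1}$ is a connected epimorphism (it is onto on vertices, onto on edges because each vertex of $A$ carries a loop, and the preimage of every subset of $\mathbf{1}$ is $\emptyset$ or $A$); then (3) for $B,C\in\mathcal{C}$ follows by amalgamating $B\to\mathbf{1}\leftarrow C$ using the construction for axiom (4).

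For axiom (4), given connected epimorphisms $f\colon B\to A$ and $g\colon C\to A$, I would let $D$ be the \emph{fibered product}: its vertices are the pairs $(b,c)\in B\times C$ with $f(b)=g(c)$, and $R^{D}\big((b,c),(b',c')\big)$ holds precisely when $R^{B}(b,b')$ and $R^{C}(c,c')$. This $R^{D}$ is reflexive and symmetric, so $D$ is a finite graph, and the coordinate projections $\pi_{B}\colon D\to B$ and $\pi_{C}\colon D\to C$ satisfy $f\circ\pi_{B}=g\circ\pi_{C}$ by construction. Surjectivity of $\pi_{B}$ on vertices uses that $g$ is onto; surjectivity on edges uses that an edge $(b,b')$ of $B$ is sent by the homomorphism $f$ to an edge $(f(b),f(b'))$ of $A$, which by surjectivity of $g$ on edges lifts to an edge $(c,c')$ of $C$, producing the edge $\big((b,c),(b',c')\big)$ of $D$ above $(b,b')$; the same applies to $\pi_{C}$.

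The crux --- and the step I expect to be the main obstacle, though it stays purely combinatorial --- is to show that $D$ is connected and that $\pi_{B}$ and $\pi_{C}$ are connected epimorphisms. I would extract two facts about the fibers of $\pi_{B}$: (i) each fiber $\pi_{B}^{-1}(b)$ is isomorphic to $g^{-1}(f(b))$, which is connected because $\{f(b)\}$ is a connected subset of $A$ and $g$ is a connected epimorphism; and (ii) for each edge $(b,b')$ of $B$ the union $\pi_{B}^{-1}(b)\cup\pi_{B}^{-1}(b')$ is connected, because $g^{-1}\big(\{f(b),f(b')\}\big)$ is a connected subset of $C$ equal to $g^{-1}(f(b))\cup g^{-1}(f(b'))$, so it contains an edge $(x,x')$ of $C$ with $g(x)=f(b)$ and $g(x')=f(b')$, and then $\big((b,x),(b',x')\big)$ is an edge of $D$ joining the two fibers. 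Granting (i) and (ii), I would deduce connectedness of $\pi_{B}^{-1}(X)$ for an arbitrary connected $X\subseteq B$ --- in particular of $D=\pi_{B}^{-1}(B)$ --- straight from the definition: writing $\pi_{B}^{-1}(X)=V_{1}\cup V_{2}$ with $V_{1},V_{2}$ nonempty (and, using reflexive loops, with $V_{1}\cap V_{2}=\emptyset$), either some fiber over a point of $X$ meets both $V_{i}$, in which case (i) furnishes the required $R^{D}$-edge inside that fiber, or else $X=W_{1}\cup W_{2}$ with $W_{i}=\{b\in X:\pi_{B}^{-1}(b)\subseteq V_{i}\}$ nonempty, and connectedness of $X$ produces an edge $(b,b')$ of $B$ with $b\in W_{1}$, $b'\in W_{2}$, so that (ii) applied to $\pi_{B}^{-1}(b)\cup\pi_{B}^{-1}(b')$ yields an $R^{D}$-edge between $V_{1}$ and $V_{2}$. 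The symmetric argument, with $B$ and $C$ interchanged, handles $\pi_{C}$, and this completes the verification of (4) and hence of the theorem.
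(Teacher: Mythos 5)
Your proposal is correct and follows essentially the same route as the paper: the fibered product $D\subseteq B\times C$ with the coordinate projections, surjectivity on vertices and edges from surjectivity of $g$, and connectedness of $\pi_B$ deduced from connectedness of its point fibers. The only cosmetic difference is that the paper packages the upgrade from ``connected point fibers'' to ``connected epimorphism'' as a separate lemma proved via paths, whereas you carry it out directly from the $U_1\cup U_2$ definition of connectedness; the content is the same.
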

\begin{proof} 
We check here that $\mathcal{C}$ satisfies the projective amalgamation property. The rest of the properties follow easily. Let $f \colon B\to A$ and $g \colon C\to A$ be connected epimorphisms and let $D$ be the subgraph of the product graph $B\times C$, induced on domain
\[
\{ (b,c) \in B\times C \colon f(b) = g(c)\}.
\]
Recall that in the product graph $B\times C$ there is an edge between $(b,c)$ and  $(b',c')$ if and only if $R^{B}(b,b')$ and $R^{C}(c,c')$. Let also $f'=p_B\colon D\to B$, $g'=p_C\colon D\to C$ be the canonical projections. By the definition of $B\times C$ it is immediate that $\pi_B,\pi_C$ are homomorphisms. 

We show that $p_B$ is a connected epimorphism. By symmetry, the same argument applies for $p_C$.
The fact that $g$ is surjective on vertexes implies that $p_B$ is surjective on vertexes since for every $b$ there is a $c_b$ with $f(b)=g(c_b)$, and hence there is $d=(b,c_b)$ with $\pi_B(d)=b$. By the same argument, and since $g$ is surjective on edges, we have that $p_B$ is surjective on edges as well. So $p_B$ is an epimorphism.  Moreover, since $g$ is connected, $g^{-1}(f(b))$ is connected for every $b\in B$. Hence the point fibers \[p_B^{-1}(b)=\{b\}\times g^{-1}(f(b))\] 
of $\pi_B$ are connected for every $b\in B$. The following general lemma implies therefore that $\pi_B$ is connected.
\end{proof}

\begin{lemma}\label{L:T1}
A function between two graphs of ${\mathcal C}$ is a connected epimorphism if and only if it is an 
epimorphism and preimages of points are connected.
\end{lemma}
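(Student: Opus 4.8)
The plan is to dispatch the forward implication immediately and then concentrate on the converse. If $f\colon B\to A$ is a connected epimorphism, then it is an epimorphism by definition, and every point-preimage is connected because singletons are connected subsets: for $X=\{a\}$ the only way to write $X=U_1\cup U_2$ with both pieces non-empty is $U_1=U_2=\{a\}$, and then $R^A(a,a)$ holds by reflexivity, so $\{a\}$ is connected; hence so is $f^{-1}(a)$.

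For the converse, I would take an epimorphism $f\colon B\to A$ all of whose point-fibers are connected, fix a connected $X\subseteq A$, and aim to show $f^{-1}(X)$ is connected. So suppose $f^{-1}(X)=U_1\cup U_2$ with $U_1,U_2$ non-empty; I need an edge joining $U_1$ and $U_2$. The argument splits into two cases according to whether some fiber meets both pieces. If there is $a\in X$ with $f^{-1}(a)\cap U_1\neq\emptyset$ and $f^{-1}(a)\cap U_2\neq\emptyset$, then $f^{-1}(a)=\big(f^{-1}(a)\cap U_1\big)\cup\big(f^{-1}(a)\cap U_2\big)$ exhibits a partition of the connected set $f^{-1}(a)$ into two non-empty parts, so connectedness of the fiber already yields an edge between $U_1$ and $U_2$.

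In the complementary case every fiber $f^{-1}(a)$ with $a\in X$ is contained entirely in $U_1$ or entirely in $U_2$. Put $A_i=\{a\in X:f^{-1}(a)\subseteq U_i\}$ for $i=1,2$. Since $f$ is surjective, all fibers are non-empty, so $A_1\cup A_2=X$; and $A_i\neq\emptyset$ because any $u\in U_i$ has $f(u)\in A_i$ (the fiber of $f(u)$ contains $u$, so it cannot be the one contained in $U_j$ for $j\neq i$). As $X$ is connected there are $a_1\in A_1$ and $a_2\in A_2$ with $R^A(a_1,a_2)$. Now I would use that $f$, being an epimorphism, is surjective on edges, and lift this edge to $u_1,u_2\in B$ with $R^B(u_1,u_2)$, $f(u_1)=a_1$, $f(u_2)=a_2$. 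Then $u_1\in f^{-1}(a_1)\subseteq U_1$ and $u_2\in f^{-1}(a_2)\subseteq U_2$, so $R^B(u_1,u_2)$ is the required edge.

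The one subtle point---and the only place where several hypotheses must be used together---is making the lifted edge land with one endpoint in $U_1$ and the other in $U_2$; this is precisely why one first reduces to the ``pure fiber'' situation and why surjectivity on edges (not merely on vertices) is essential. Connectedness of the fibers drives the first case, connectedness of $X$ together with edge-surjectivity drives the second, and finiteness of the graphs plays no role.
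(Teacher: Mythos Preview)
Your proof is correct. It uses the same two ingredients as the paper---connectedness of point-fibers and surjectivity on edges---but organizes them differently. The paper first reduces the problem to showing that the preimage of each \emph{edge} $\{a_1,a_2\}$ is connected (implicitly using that in a finite graph connectedness coincides with path-connectedness, so one can chain edge-preimages along a path in $X$); it then shows any two points $b_1,b_2$ with $R^A(f(b_1),f(b_2))$ are joined by a path $b_1\rightsquigarrow b_1'\to b_2'\rightsquigarrow b_2$, where $(b_1',b_2')$ is a lifted edge and the squiggles are paths inside the respective fibers. You instead work directly from the partition definition of connectedness for an arbitrary connected $X$, splitting into the case where some fiber straddles $U_1$ and $U_2$ (handled by fiber connectedness) and the case where the fibers sort into $U_1$ or $U_2$ (handled by pushing the partition down to $X$ and lifting an edge). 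Your route avoids the implicit reduction-to-edges step and the appeal to path-connectedness, at the cost of the extra case analysis; the paper's route is shorter once that reduction is granted.
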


\begin{proof} Only the direction $\Leftarrow$ needs to be checked. Let $f\colon B\to A$ be an 
epimorphism such that preimages of points are connected. It suffices to show that preimages of edges 
are connected. Let $b_1, b_2\in B$ be such that $R^A(f(b_1), f(b_2))$. Since $f$ is an epimorphism, there 
are $b_1', b_2'\in B$ that form an edge and are such that $f(b_1')=f(b_1)$ and $f(b_2')=f(b_2)$. Since the preimages 
of $f(b_1)$ and $f(b_2)$ are connected, there is a path connecting $b_1$ with $b_1'$ and $b_2$ with $b_2'$. Since 
$b_1'$ and $b_2'$ are connected by an edge, $b_1$ and $b_2$ are connected by a path, as required. 
\end{proof}

\section{Topological graphs and Peano continua}\label{S: Topological graphs and Peano continua}

We import some notions from \cite{IS} and we apply them here in the special case of graphs. A {\bf topological graph} $K$ is a graph $(K,R^{K})$,  whose domain $K$ is a $0$-dimensional, compact, metrizable topological space and $R^{K}$ is a closed subset of $K^2$. All types of morphisms we consider between topological graphs are assumed to be continuous. Moreover, we automatically view all finite graphs as topological structures endowed with the discrete topology.

We extend $\mathcal{C}$ to the class $\mathcal{C}^{\omega}$ of all topological graphs and epimorphisms which are ``approximable'' within $\mathcal{C}$.  A concrete description of $\mathcal{C}^{\omega}$ is given in Proposition \ref{P:char}. The rest of the paragraph defines $\mathcal{C}^{\omega}$ in abstract terms. Let $(K_n,f^n_m,\mathbb{N})$  be an inverse system of finite connected graphs with bonding maps  $f^n_m\colon K_n\to K_m$ from $\mathcal{C}$. It is easy to check that the inverse limit $K=\varprojlim (K_n,f^{n}_m)\in {\mathcal C}^\omega$ is a topological graph, where $(x_0,x_1,\ldots)$ is $R$-connected with $(y_0,y_1,\ldots)$ in $K$ if for every $n$, $x_n$ is $R$-connected with $y_n$ in $K_n$; see for example the proof of Proposition \ref{P:char}. We collect in ${\mathcal C}^\omega$ all topological graphs $K$ which are inverse limits of sequences with bonding maps from $\mathcal{C}$. 
Notice that every finite connected graph is in $\mathcal{C}^{\omega}$. If $A\in {\mathcal C}$ and $K=\varprojlim (K_n,f^{n}_m)\in {\mathcal C}^\omega$, then an epimorphism $h\colon K\to A$ is in ${\mathcal C}^\omega$ if and only if there exists $m$, and a morphism $h'\colon K_m\to A$ in ${\mathcal C}$, such that $h$ is the composition of $h'$ with the canonical projection $f_m$ from $K$ to $B_m$. For two topological graphs $K,L\in {\mathcal C}^\omega$, an epimorphism $h\colon L\to K$ is said to be in ${\mathcal C}^\omega$ 
if for each $A\in {\mathcal C}$ and each $g\colon K\to A$ in ${\mathcal C}^\omega$, the composition $g\circ h$ is in ${\mathcal C}^\omega$. Finally, an epimorphism $h\colon L \to K$ is an isomorphism if it is injective and both $h,h^{-1}$ are in ${\mathcal C}^\omega$. Notice that $h$ is an isomorphism
between $K=\varprojlim (K_n,f^{n}_m)\in {\mathcal C}^\omega$ and $L=\varprojlim (L_n,g^{n}_m)\in {\mathcal C}^\omega$ if and only if there is a sequence $(h_i)$ of morphisms in $\mathcal{C}$ and 
two strictly increasing sequences $(k_i)$ and $(l_i)$ of natural numbers such that for each $i$ 
\[
h_{2i}\circ h_{2i+1} = f^{k_{i+1}}_{k_{i}}\;\hbox{ and }\; h_{2i+1}\circ h_{2i+2} = g^{l_{i+1}}_{l_{i}}.
\]

We now give a more concrete description of the graphs and morphisms of ${\mathcal C}^\omega$. Let $K$ be a topological graph. We say a subset $X$ of $K$ is {\bf connected} if, for all open $U_1, U_2\subseteq K$ with $X\cap U_1\not=\emptyset \not= X\cap U_2$ 
and $X\subseteq U_1\cup U_2$, there exist $x_1\in X\cap U_1$ and $x_2\in X\cap U_2$ such that $R^K(x_1, x_2)$. We say that a topological graph $(K,R^{K})$ is {\bf connected} if $K$ is connected as a subset of the graph. We say that it is {\bf locally-connected} if it admits a basis of its topology consisting of connected sets in then above sense. Let $K,L$ be topological graphs and let $f\colon L \to K$ be an epimorphism. We say that $f$ is a {\bf connected  epimorphism} if the preimage of each closed connected subset of $K$ is connected. Note that the above notions coincide with the analogous notions introduced for finite graphs. 

\begin{proposition}\label{P:char}
$\mathcal{C}^\omega$ is the class of all connected epimorphisms between connected, locally-connected, topological graphs.
\end{proposition}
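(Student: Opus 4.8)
I would establish the equality as two inclusions. Recall that the statement has two components: that the \emph{objects} of $\mathcal{C}^\omega$ are exactly the connected, locally-connected topological graphs, and that its \emph{morphisms} are exactly the connected epimorphisms between such graphs. For the inclusion ``$\subseteq$'', which I expect to be routine inverse-limit bookkeeping, I would use repeatedly the fact that an inverse limit of nonempty, compact, connected topological graphs is again connected; in the case where the bonding maps are connected epimorphisms this follows directly from the definitions and a compactness argument (the same one that shows the inverse limit is a topological graph), and in general it is the graph analogue of a classical theorem. Starting from $K=\varprojlim(K_n,f^n_m)\in\mathcal{C}^\omega$, I would first observe that each projection $q_n\colon K\to K_n$ is a connected epimorphism: it is an epimorphism by compactness, and $q_n^{-1}(C)=\varprojlim_{m\ge n}(f^m_n)^{-1}(C)$ is an inverse limit of connected finite graphs (with connected bonding maps) for every connected $C\subseteq K_n$, hence connected. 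Taking $C=K_0$ shows $K$ is connected, and since the clopen fibres $q_n^{-1}(\bar x)$ form a basis of connected sets, $K$ is locally-connected.

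For the morphism part of ``$\subseteq$'', let $h\colon L\to K$ be a morphism of $\mathcal{C}^\omega$, with $L=\varprojlim(L_k,g^k_j)$. Applying the definition of ``morphism of $\mathcal{C}^\omega$'' to the projections $q^K_m\colon K\to K_m$, which themselves lie in $\mathcal{C}^\omega$, I would obtain---after passing to a cofinal subsequence of the $L_k$---morphisms $\phi_m\colon L_{k(m)}\to K_m$ in $\mathcal{C}$ with $q^K_m\circ h=\phi_m\circ q^L_{k(m)}$. Surjectivity of the $q^L_{k(m)}$ then forces the compatibilities $f^{m+1}_m\circ\phi_{m+1}=\phi_m\circ g^{k(m+1)}_{k(m)}$, so that $(\phi_m)$ is a morphism of inverse systems and $h=\varprojlim\phi_m$. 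Consequently, for any closed connected $C\subseteq K$, using that a closed subset of an inverse limit is the intersection of its cylindrical saturations, $h^{-1}(C)=\varprojlim_m\phi_m^{-1}\big(q^K_m(C)\big)$; here each $q^K_m(C)$ is a homomorphic image of the connected set $C$, hence connected, and each $\phi_m$ is a connected epimorphism, so $\phi_m^{-1}(q^K_m(C))$ is a connected finite graph and the limit is connected. Thus $h$ is a connected epimorphism, and its domain and codomain are connected and locally-connected by the object case.

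The inclusion ``$\supseteq$'' is where the work lies, and its crux---the step I expect to be the main obstacle---is the following claim: for every connected, locally-connected, $0$-dimensional, compact, metrizable topological graph $K$, every $\varepsilon>0$, and every finite clopen partition $\mathcal{R}$ of $K$, there is a finite partition of $K$ into clopen connected sets of diameter less than $\varepsilon$ refining $\mathcal{R}$. My plan for the claim: refine $\mathcal{R}$ to a finite clopen partition of diameter less than $\varepsilon$ (possible since $K$ is $0$-dimensional, compact, and metrizable), then replace each piece $V$ by the family of its graph-components; since $V$ is itself locally-connected these are open in $V$, so---$V$ being compact and there being only finitely many---they are clopen in $K$, and they are connected and of diameter less than $\varepsilon$. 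This is the one place where local-connectedness (together with compactness and $0$-dimensionality) is used essentially, and it amounts to the graph version of the classical fact that in a locally-connected space the connected components of open sets are open, so that inside a compact (clopen) piece there are finitely many of them and each is clopen. Granting the claim, I would choose partitions $\mathcal{P}_m$ of $K$ into clopen connected sets with $\mathcal{P}_{m+1}$ refining $\mathcal{P}_m$ and diameters tending to $0$, set $K_m=K/\mathcal{P}_m$ with the quotient graph structure, and let $f^m_{m'}$ be the refinement maps. Each $K_m$ is a finite connected graph; each $f^m_{m'}$ is an epimorphism whose point-fibre over a piece $\bar x$ is $\bar x$ repartitioned by $\mathcal{P}_m$---a homomorphic image of the connected set $\bar x$, hence connected---so $f^m_{m'}$ is a connected epimorphism by Lemma~\ref{L:T1}; and $K=\varprojlim(K_m,f^m_{m'})$ because the diameters tend to $0$ (so the projections separate points) and $R^K$ is closed (so the limit carries the correct edge relation). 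Hence $K\in\mathcal{C}^\omega$, and the same argument places every connected, locally-connected topological graph in $\mathcal{C}^\omega$.

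Finally, for the morphism part of ``$\supseteq$'', given a connected epimorphism $h\colon L\to K$ between two such graphs, I would run the above construction for $K$ and then, invoking the claim once more, build partitions $\mathcal{Q}_m$ of $L$ into clopen connected sets refining both $\mathcal{Q}_{m-1}$ and the pullback $h^{-1}(\mathcal{P}_m)$, with diameters tending to $0$. Writing $L_m=L/\mathcal{Q}_m$, the map $h$ descends to $\psi_m\colon L_m\to K_m$ with $q^K_m\circ h=\psi_m\circ q^L_m$; its point-fibre over $\bar x\in K_m$ is $h^{-1}(\bar x)$ repartitioned by $\mathcal{Q}_m$, and $h^{-1}(\bar x)$ is connected because $\bar x$ is a closed connected subset of $K$ and $h$ is connected, so $\psi_m$ is a connected epimorphism by Lemma~\ref{L:T1}, and $h=\varprojlim\psi_m$. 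Then any $g\colon K\to A$ in $\mathcal{C}^\omega$ with $A\in\mathcal{C}$ factors as $g=g'\circ q^K_m$ for some $m$ and some $g'\colon K_m\to A$ in $\mathcal{C}$, so $g\circ h=(g'\circ\psi_m)\circ q^L_m\in\mathcal{C}^\omega$; hence $h\in\mathcal{C}^\omega$, which completes the argument. In sum, I anticipate the clopen-connected-partition claim to be the only substantive obstacle, the rest being inverse-limit manipulation together with the stability of connectedness under inverse limits of compact connected graphs.
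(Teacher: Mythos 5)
Your proposal is correct and follows essentially the same route as the paper: build finite approximations from partitions into clopen connected sets (obtained, as you do, by splitting clopen pieces into their finitely many graph-components), read off the bonding and induced maps via Lemma~\ref{L:T1}, and in the other direction deduce connectedness and local-connectedness of inverse limits by lifting edges level by level. The only point to make explicit is the auxiliary fact you invoke for $h^{-1}(C)=\varprojlim_m\phi_m^{-1}(q^K_m(C))$, whose bonding maps need not be connected or even surjective; the general statement that an inverse limit of nonempty finite connected graphs is graph-connected is true, but it requires the K\"onig-type edge-lifting argument (pass to the eventual images, which are connected as homomorphic images, and extract a coherent sequence of crossing edges), i.e., essentially the induction the paper carries out.
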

\begin{proof}

Let $K=\varprojlim (K_n,f^{n}_m)\in {\mathcal C}^\omega$ with $f^n_m\in\mathcal{C}$.
The underlying space of the graph $K$ is $0$-dimensional, compact, and  metrizable, since it is a countable inverse limit of discrete finite spaces. 
The set $R^{K}$ is closed and contains the diagonal as an intersection of closed relations containing the diagonal. This proves that that $K$ is a topological graph. We see now that $K$ is also connected.  Let also $f_n\colon K\to K_n$ be the projection induced by the inverse system. Assume that $U_1, U_2$ are non-empty open subsets of $K$ with  $K\subseteq U_1\cup U_2$. Since $K_0$ is connected, we can pick $x_0\in f_0(U_1)$ and $y_0\in f_0(U_2)$ with $R^{K_0}(x_0, y_0)$. Assume by induction that we picked $x_n \in f_n(U_1)$ and $y_n \in f_n(U_2)$, with $R^{K_n}(x_n, y_n)$, so that $f^n_{n-1}(x_n)=x_{n-1}$ and $f^n_{n-1}(y_n)=y_{n-1}$. Using the fact that $f^{-1}_n(\{x_n,y_n\})$ is connected and that $f^{n+1}_n$ is an epimorphism we can pick $x_{n+1}\in f_{n+1}(U_1)$ and $y_{n+1}\in f_{n+1}(U_2)$ with $R^{K_{n+1}}(x_{n+1}, y_{n+1})$, and so that  $f^{n+1}_n(x_{n+1})=x_{n}$ and $f^{n+1}_n(y_{n+1})=y_{n}$. Hence, $(x_0,x_1,\ldots)\in U_1$ and $(y_0,y_1,\ldots)\in U_2$ are such that $R^K((x_0,x_1,\ldots), (y_0,y_1,\ldots))$. The exact same argument can be applied to show that every clopen set of $K$ of the form $f^{-1}_n(x)$, where $x\in K_n$, is connected. Hence $K$ is locally-connected as well.

 Let now $L=\varprojlim (L_n,g^{n}_m)\in {\mathcal C}^\omega$ as well and let $h\colon L\to K$  be a morphism in $\mathcal{C}^\omega$. By definition, for every $m$ there is an $n$ and a connected epimorphism $h'\colon L_n \to K_m$, so that $h'\circ g_n=  f_m\circ h$, where $g_n\colon K\to K_n$ is the canonical projection. Since every connected clopen subset $\Delta$ of $K$ is of the form $f_m^{-1}(X)$ for large enough $m$ and some connected subset $X$ of $K_m$, we have that $h^{-1}(\Delta)= (h' \circ g_n)^{-1}(X)$ is a connected clopen subset of $L$. The rest follows from the fact that every closed connected subsets of $K$ and $L$ are interstions of a decreasing sequence of connected clopen subsets of the same spaces.

We turn to the converse statements first for graphs and then for morphisms. Let $K$ be a connected, locally-connected, topological graph. It is not difficult to see that $K$ admits a basis $\mathcal{U}$ of connected clopen sets. Using compactness of $K$ as well as of every element of $\mathcal{U}$, we can find a sequence $\mathcal{U}_n$ of finite covers of $K$ so that $\mathcal{U}_n\subset \mathcal{U}$, $\mathcal{U}_n$ refines $\mathcal{U}_{n-1}$, if  $U,V\in\mathcal{U}_n$ then $U\cap V=\emptyset$, and $\bigcup_n\mathcal{U}_n$ separates points of $K$. One can define a graph structure on $\mathcal{U}_n$ by putting an $R$-edge between $U$ and $V$ if there is $x\in U$ and $y\in V$ with $R^K(x,y)$. It is easy to see now that $f^{n}_m\colon \mathcal{U}_n\to \mathcal{U}_m$ is a connected epimorphism between finite connected graphs and that $K=\varprojlim (\mathcal{U}_n,f^{n}_m)$.

Let now $h\colon L\to K$ be a connected epimorphism between connected, locally-connected, topological graphs. By the previous paragraph  $K=\varprojlim (K_n,f^{n}_m)$ and $L=\varprojlim (L_n,g^{n}_m)$, where $f^n_m,g^n_m\in\mathcal{C}$. It suffices to show that for every $m$ there is $n$, and a map $h'\colon L_n\to K_m$ with $h'\in\mathcal{C}$ and $g_m \circ h= h' \circ f_n$. Fix some $m$ and let $n$ large enough so that $\{g^{-1}_n(y)\colon y\in L_n\}$ refines $\{(f_m\circ h)^{-1}(x)\colon x\in K_m\}$. Let also $h'\colon L_n\to K_m$ be the unique map that witnesses this refinement. Using that $f_m\circ h$ and $g_n$ are connected epimorphisms it is easy to see that $h'$ is in $\mathcal{C}$ as well. 
\end{proof}

Next we illustrate the relationship between topological graphs and Peano continua. Recall that a {\bf continuum} is a connected, compact,  metrizable space. A {\bf Peano continuum} is a continuum that is locally-connected. A map  $\phi\colon Y\to X$  between topological spaces is {\bf connected} if  $\phi^{-1}(Z)$ is connected for every closed connected subset $Z$ of $X$.
 Here connected and locally-connected refer to the standard topological notion. We also adopt the convention that the empty space is not connected.
   We will always accompany ambiguous terminology such as ``connected'' with further specification such as ``graph'' or ``space'' to distinguish between our combinatorial and the standard topological notion of connectedness.

A topological graph $K\in\mathcal{C}^{\omega}$ is a {\bf prespace} if the edge relation $R$ is also transitive. In other words, if $K$ is a collection of cliques. This makes $R$ an equivalence relation and we denote by $[x]$ the equivalence class of $x\in K$. Similarly, for every subset $F$ of $K$ we denote by $[F]$ the set of all $y\in K$ which lie in some equivalence class $[x]$ with $x\in F$. The {\bf topological realization} $|K|$ of a prespace $K$ is defined to be the quotient 
\[ K/R^K=\{[x]\colon x\in K\},\]  
endowed with the quotient topology. We denote by $\pi$ the quotient map $K\mapsto |K|$.
Since $R^K$ is compact, $|K|$ is compact and  metrizable. In fact, we have the following theorem.

\begin{theorem}\label{T: Peano <--> prespaces}
For a topological space $X$ the following are equivalent:
\begin{enumerate}
\item $X$ is a Peano continuum;
\item $X$ is homeomorphic to $|K|$ for some prespace $K\in\mathcal{C}^{\omega}$.
\end{enumerate}
\end{theorem}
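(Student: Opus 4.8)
The plan is to prove the two implications separately, with the hard direction being $(2)\Rightarrow(1)$—or rather, the reverse. Let me reconsider: the genuinely delicate part is $(1)\Rightarrow(2)$, constructing a prespace $K\in\mathcal C^\omega$ with $|K|$ homeomorphic to a given Peano continuum $X$. The direction $(2)\Rightarrow(1)$ is comparatively soft and I would handle it first.

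\medskip
\noindent\textit{Direction $(2)\Rightarrow(1)$.} Suppose $X\cong|K|$ for a prespace $K=\varprojlim(K_n,f^n_m)\in\mathcal C^\omega$. Since $R^K$ is a closed equivalence relation on a compact metrizable space, $|K|$ is compact and metrizable, and it is a continuous image of the connected space $K$ (Proposition~\ref{P:char} gives that $K$ is connected as a topological graph; one checks the quotient map is continuous and that topological connectedness of $|K|$ follows from graph-connectedness of $K$ because the quotient collapses exactly the cliques). So $|K|$ is a continuum. For local connectedness I would argue that the images $\pi(f_n^{-1}(x))$, for $x\in K_n$, form a neighborhood basis of connected sets: each $f_n^{-1}(x)$ is clopen and connected in $K$ by Proposition~\ref{P:char}, its $R$-saturation $[f_n^{-1}(x)]$ is still connected and is open (here I use that $R^K$ is a closed equivalence relation with clopen classes being singletons up to finite mergers—more carefully, $[f_n^{-1}(x)]=\pi^{-1}(\pi(f_n^{-1}(x)))$ is a union of finitely many classes hence clopen when the covers $\mathcal U_n$ are fine enough), so its image is open and connected in $|K|$. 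Hence $|K|$ is a Peano continuum.

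\medskip
\noindent\textit{Direction $(1)\Rightarrow(2)$.} Let $X$ be a Peano continuum. By the Hahn--Mazurkiewicz theorem $X$ is a continuous image of $[0,1]$, but I would instead use the standard fact that a Peano continuum admits, for every $\varepsilon>0$, a finite cover by connected open sets of diameter $<\varepsilon$ whose nerve is connected, and moreover one can arrange these covers to refine each other coherently. From such a sequence of covers $(\mathcal V_n)$ one builds finite connected graphs $K_n$ (vertices = members of $\mathcal V_n$, edges recording overlap) with connected bonding epimorphisms $f^n_m$ coming from the refinement, so $K=\varprojlim(K_n,f^n_m)\in\mathcal C^\omega$. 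The map sending a thread $(x_n)$ to the unique point of $\bigcap_n \overline{x_n}$ gives a continuous surjection $\psi\colon K\to X$; the key point is that $\psi(x)=\psi(y)$ should hold exactly when $R^K(x,y)$. Getting this to be an \emph{equivalence} relation—i.e., making $K$ a prespace—requires care in how the overlaps are recorded: one must shrink/fatten the cover elements so that the "edge" relation at each level is transitive on the relevant scale, or alternatively pass to the graph of the equivalence relation $\{(x,y):\psi(x)=\psi(y)\}$ and verify it is closed and realized as an inverse limit of finite cliques-unions. Then $|K|=K/R^K\cong X$ via $\psi$.

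\medskip
The main obstacle I anticipate is precisely arranging that the inverse system can be chosen so that the limiting edge relation $R^K$ is transitive while still being an inverse limit of maps \emph{in} $\mathcal C$ (connected epimorphisms between finite connected graphs). One cannot naively take the nerve, since nerve edges need not assemble into an equivalence relation in the limit; the fix is to use a "double nerve" or mapping-cylinder-type construction that interpolates between consecutive covers, recording for each pair of nearby cover elements a witnessing point, so that in the limit two threads are $R$-related iff they pick out the same point of $X$—and this relation is tautologically transitive. Verifying that the resulting bonding maps are connected epimorphisms (not merely epimorphisms) is then a routine application of Lemma~\ref{L:T1}, checking that point preimages are connected. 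I would organize the proof so that this construction is stated as a lemma, then conclude both the continuum and local-connectedness properties of $|K|$ from Proposition~\ref{P:char} as above.
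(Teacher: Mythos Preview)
Your treatment of $(2)\Rightarrow(1)$ is along the same lines as the paper's (which packages the ``$\pi(f_n^{-1}(x))$ form a neighborhood basis'' step into a separate lemma), though your justification that $[f_n^{-1}(x)]$ is clopen is not quite right: saturations of clopen sets need not be clopen. The paper instead shows that $\pi(f_n^{-1}(x))$ contains an open set by observing that the complement $[f_n^{-1}(x)^c]$ is closed (projection of a closed set along a compact factor), so its complement in $K$ is open, $R$-invariant, and sits inside $f_n^{-1}(x)$. This is a small patch, not a structural problem.

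The real gap is in $(1)\Rightarrow(2)$. You have correctly located the obstacle---transitivity of $R^K$ in the limit---but neither of your proposed fixes is a working argument. The ``double nerve / mapping-cylinder'' suggestion is too vague to evaluate, and ``pass to the pullback of equality under $\psi$'' simply shifts the burden: you would then have to show that $K$ equipped with \emph{that} relation still lies in $\mathcal C^\omega$, i.e.\ is an inverse limit of connected epimorphisms of finite connected graphs, and nothing in your outline addresses this. With ordinary overlapping covers and nerve edges $V\cap V'\neq\emptyset$, the implication $\psi(x)=\psi(y)\Rightarrow R^K(x,y)$ genuinely can fail (the common limit point may lie only in the closures of $x_n,y_n$, not in their intersection), so the pullback relation is strictly larger than $R^K$ and you cannot just swap one for the other.

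The paper's device is to invoke Bing's Partition Theorem: for each $n$ one obtains a finite collection $\mathcal O_n$ of \emph{pairwise disjoint} connected open sets whose union is dense in $X$, with $\mathcal O_{n+1}$ refining $\mathcal O_n$ and meshes going to zero. Edges are then declared by $\overline O\cap\overline{O'}\neq\emptyset$. With disjoint $O$'s and closure-intersection edges, the relation $R^K$ in the inverse limit is \emph{exactly} the pullback of equality under $\rho\colon(O_n)\mapsto\bigcap_n\overline{O_n}$: if $\rho(x)=\rho(y)=p$ then $p\in\overline{x_n}\cap\overline{y_n}$ for all $n$, giving $R^K(x,y)$ for free, and conversely small diameters force equal limits. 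Transitivity is then automatic. The bonding maps are connected epimorphisms because each $O\in\mathcal O_n$ is connected, so point-fibers of the refinement map are connected and Lemma~\ref{L:T1} applies. This is the missing ingredient in your outline.
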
 

We start with a lemma.

\begin{lemma}\label{L: basis}
Let $K=\varprojlim(K_n,g^n_m)\in\mathcal{C}^{\omega}$ be a prespace, let $x\in K$ and let $g_n\colon K\to K_n$ be the natural canonical projections. Consider the following families:
\begin{itemize}
\item $\mathcal{P}^x_1=\{g^{-1}(a)\colon \; g\in\mathcal{C}^{\omega}, \; g([x])=a\}$, where $g$ ranges over all maps  $g\colon K\to A$ in $\mathcal{C}^{\omega}$, with $A\in \mathcal{C},$ and  $a\in A$; 
\item $\mathcal{P}^x_2=\{g^{-1}(Q)\colon \; g\in\mathcal{C}^{\omega}, \; g([x])=Q\}$, where $g$ ranges over all maps  $g\colon K\to A$ in $\mathcal{C}^{\omega}$, with $A\in \mathcal{C},$ and  $Q\subseteq A$;
\item $\mathcal{P}^x_3=\{g^{-1}(Q)\colon \; g\in\mathcal{C}^{\omega}, \; g([x])=Q\}$, where everything is as in $\mathcal{P}^x_2$, but $g$ ranges only over $\{g_n\colon n\in\mathbb{N}\}$.
\end{itemize}
If $\mathcal{P}^x$ is either of the above families, then $\mathcal{P}^x_{\pi}=\{\pi(P)\colon P\in \mathcal{P}^x\}$ is a neighborhood basis of $[x]$ in $|K|$ consisting of closed sets.
\end{lemma}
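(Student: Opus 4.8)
The plan is to carry out the real work for the smallest family $\mathcal{P}^x_3$ and then obtain the assertions for $\mathcal{P}^x_1$ and $\mathcal{P}^x_2$ by a sandwiching argument. I would begin with two elementary observations. Since $R^K$ is closed, $|K|$ is compact and metrizable, hence Hausdorff, so the quotient map $\pi\colon K\to|K|$ is closed. Moreover, every member $P$ of any of the three families is a clopen subset of $K$ that contains $[x]$: it is the $g$-preimage of a finite subset of a finite discrete graph, hence clopen, and the defining clause of each family says exactly that $g([x])$ is contained in that finite set, so $[x]\subseteq P$. Granting this, $\pi(P)$ and $\pi(K\setminus P)$ are closed, and since $[x]\subseteq P$ no $R^K$-class meeting $K\setminus P$ equals $[x]$; thus $[x]\in|K|\setminus\pi(K\setminus P)$, which is an open subset of $\pi(P)$. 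So every member of $\{\pi(P):P\in\mathcal{P}^x_i\}$ is a closed neighborhood of $[x]$.

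Second, I would verify the inclusions $\mathcal{P}^x_3\subseteq\mathcal{P}^x_1\subseteq\mathcal{P}^x_2$. The inclusion $\mathcal{P}^x_1\subseteq\mathcal{P}^x_2$ is immediate (take $Q=\{a\}$). For $\mathcal{P}^x_3\subseteq\mathcal{P}^x_1$, fix $n$ and put $Q_n=g_n([x])$; this is a clique of $K_n$ since homomorphisms map cliques to cliques. Let $A$ be the finite graph obtained from $K_n$ by collapsing $Q_n$ to a single vertex $a$, with quotient map $q\colon K_n\to A$. Then $A$ is connected as an epimorphic image of the connected graph $K_n$, and $q$ is an epimorphism with $q^{-1}(a)=Q_n$ connected and every other fiber a singleton, so $q\in\mathcal{C}$ by Lemma~\ref{L:T1}; hence $g:=q\circ g_n\in\mathcal{C}^\omega$. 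Since $g([x])=q(Q_n)=\{a\}$ and $g^{-1}(a)=g_n^{-1}(Q_n)$, the set $g_n^{-1}(Q_n)$, the generic member of $\mathcal{P}^x_3$, belongs to $\mathcal{P}^x_1$.

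The core of the proof is to show that $\{\pi(P):P\in\mathcal{P}^x_3\}$ refines the neighborhood filter of $[x]$. Set $C_n=g_n^{-1}(g_n([x]))$. From $g_n=g^{n+1}_n\circ g_{n+1}$ one gets $C_{n+1}\subseteq C_n$, so $(C_n)$ is a decreasing sequence of clopen sets. The key identity is $\bigcap_n C_n=[x]$: the inclusion $\supseteq$ is clear, and for $\subseteq$, if $z\in C_n$ for all $n$ then for each $n$ there is $y_n\in[x]$ with $g_n(y_n)=g_n(z)$, and $y_n\in[x]$ forces $R^{K_n}(g_n(x),g_n(y_n))$, whence $R^{K_n}(g_n(x),g_n(z))$ for all $n$; by the description of the edge relation on an inverse limit this means $R^K(x,z)$, i.e.\ $z\in[x]$. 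Now, given an open $W\ni[x]$ in $|K|$, the set $V=\pi^{-1}(W)$ is open in $K$ and contains $\bigcap_n C_n$, so by compactness $C_n\subseteq V$ for some $n$; then $\pi(C_n)\subseteq\pi(V)=W$ and $\pi(C_n)\in\{\pi(P):P\in\mathcal{P}^x_3\}$.

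Putting the three steps together proves the lemma: by the first step each of the three families maps under $\pi$ to a family of closed neighborhoods of $[x]$; by the second step $\mathcal{P}^x_1$ and $\mathcal{P}^x_2$ both contain $\mathcal{P}^x_3$; and by the third step every neighborhood of $[x]$ contains the $\pi$-image of some member of $\mathcal{P}^x_3$. Hence for each $i\in\{1,2,3\}$ the family $\{\pi(P):P\in\mathcal{P}^x_i\}$ is a neighborhood basis of $[x]$ consisting of closed sets. The only step that needs genuine care is the identity $\bigcap_n C_n=[x]$ in the third step---the point at which the definition of $R$ on the inverse limit really enters---together with the small but essential observation that the fibers $C_n$ form a decreasing sequence, so that compactness applies; everything else is routine point-set bookkeeping.
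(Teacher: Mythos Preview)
Your proof is correct and follows essentially the same strategy as the paper's: show each $\pi(P)$ is a closed neighborhood of $[x]$, show $\mathcal{P}^x_3$ already gives a basis, and use the containments to handle $\mathcal{P}^x_1$ and $\mathcal{P}^x_2$. Your argument is in fact more explicit than the paper's in two places: you spell out the clique-collapsing quotient $q\colon K_n\to A$ to exhibit $\mathcal{P}^x_3\subseteq\mathcal{P}^x_1$ (the paper only asserts $\mathcal{P}^x_1=\mathcal{P}^x_2\supseteq\mathcal{P}^x_3$ without detail), and you prove $\bigcap_n C_n=[x]$ directly from the inverse-limit description of $R^K$, whereas the paper phrases the same compactness step in terms of refining covers by the partitions $\{g_n^{-1}(b):b\in K_n\}$. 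Your use of the closedness of $\pi$ (from compactness of $K$ and Hausdorffness of $|K|$) to get that $\pi(K\setminus P)$ is closed is equivalent to the paper's computation that $[P^c]$ is closed as the projection of a closed set along a compact coordinate---indeed $\pi^{-1}\big(|K|\setminus\pi(K\setminus P)\big)=[P^c]^c$.
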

\begin{proof}
Let $P\in\mathcal{P}^{x}_i$ and set $U= [P^c]^c \subseteq K$. Notice that  $[P^c]$ is  the projection of the closed set
\[\{(x,y)\in K\times K\mid (x,y)\in \big(R^K\bigcap (K\times P^c)\big)  \},\]
along the compact second coordinate and therefore  $U$ is open. Since $R^K$ is an equivalence relation and $[P^c]$  is $R^K$ invariant, then  so is $U$. Hence, $\pi(U)$ is an open subset of $|K|$, and it clearly holds that  $[x]\in \pi(U)\subseteq \pi(P)$. 
 Since $\pi\colon K\to |K|$ is continuous and $P$ clopen we have that  $\pi(P)$ is a closed neighborhood of $[x]$. Compactness of $K$ implies that any open cover of $K$ can be refined by a partition of the form $\{g^{-1}_n(b)\colon b\in K_n\}$, for large enough $n$. Hence $\mathcal{P}^x_3$ projects through $\pi$ to a neighborhood basis of $[x]$. It is not difficult now to see that $\mathcal{P}^x_1=\mathcal{P}^x_2\supseteq \mathcal{P}^x_3$.
\end{proof}

We turn now back to the proof of Theorem \ref{T: Peano <--> prespaces}.

\begin{proof}[Proof of Theorem \ref{T: Peano <--> prespaces}]

First we show that $(2)\implies (1)$. Let $\mathcal{P}$ be the collection of clopen subsets of $K$ of the form $f^{-1}(a)$, where $f$ ranges over all $f\colon K\to A$ in $\mathcal{C}^{\omega}$ and $a\in A$. By Lemma \ref{L: basis}, $\mathcal{P}$ projects via $\pi$ to a neighborhood basis of $|K|$. It suffices to show that $\pi(P)$ is connected for every $P\in\mathcal{P}$; see Theorem 2.5 \cite{GM}, for example. Since every $P\in \mathcal{P}$ is itself an element of $\mathcal{C}^{\omega}$, it suffice to show that $|K|$ is a connected space for every prespace $K\in {\mathcal C}^\omega$. But any clopen partition of $|K|$ pulls back through $\pi$ to a clopen partition $\{U_1,U_2\}$ of $K$ which is invariant, that is, $[U_1]=U_1$ and $[U_2]=U_2$. By Theorem \ref{P:char}, $U_1$ is either empty or the whole space.

For  $(1)\implies (2)$, let $X$ be a Peano continuum. By Bing's Partition theorem (see \cite{Bi}) there is a sequence $(\mathcal{O}_n)$  of finite collections of disjoint open subsets of $X$, so that for all $n\in\mathbb{N}$ we have that:
\begin{enumerate}
\item $\bigcup\mathcal{O}_n$ is dense in $X$;
\item $O$ is connected, for all $O\in\mathcal{O}_n$;
\item $\mathcal{O}_{n+1}$ refines $\mathcal{O}_{n}$;
\item any open cover of $X$ is refined by $\mathcal{O}_{m}$, for large enough $m$.
\end{enumerate}
We turn each finite set $\mathcal{O}_n$ into a graph by putting an edge between $O $ and $O'$, if and only if, $\overline{O}\cap \overline{O'}\neq\emptyset$. Let $f^n_m\colon \mathcal{O}_n\to\mathcal{O}_m$ be the uniquely defined refinement map. Since every $O\in\bigcup_{n} \mathcal{O}_n$ is connected, it follows that $f^n_m\in\mathcal{C}$. Let $K=\varprojlim (\mathcal{O}_n,f^{n}_m)$. Let $\rho\colon K\to X$, mapping each point
 $x=(O_1,O_2,\ldots)\in K$ to the unique---by property $(4)$ above---point $\rho(x)$ with  $\{\rho(x)\}=\bigcap_{n}\overline{O_n}$. It is easy to see that $R^K$ is  the pullback of equality on $X$ under $\rho$, and hence, $K$ is a prespace with $X\simeq |K|$.
\end{proof}

\section{The Menger curve}\label{S: Menger curve}

The next theorem is proved using the methods of \cite{IS}. For completeness we summarize the construction of  ${\mathbb F}$ below. 
\begin{theorem}\label{T: characterization}
If $\mathcal F$ is a projective Fra{\"i}ss{\'e} family, then there exists a unique topological structure 
${\mathbb F}\in {\mathcal F}^\omega$ such that: 
\begin{enumerate}
\item for each $A\in {\mathcal F}$, there exists a morphism in ${\mathcal F}^\omega$ from $\mathbb F$ to $A$; 
\item for $A,B\in {\mathcal F}$ and morphisms $f\colon {\mathbb F}\to A$ and $g\colon B\to A$ in ${\mathcal F}^\omega$ 
there exists a morphism $h\colon {\mathbb F}\to B$  in ${\mathcal F}^\omega$ such that $f=g\circ h$.
\end{enumerate}
\end{theorem}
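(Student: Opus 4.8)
The plan is to prove Theorem~\ref{T: characterization} by the standard projective \Fraisse{} back-and-forth argument, adapted to the setting where morphisms may form a strict subclass of epimorphisms. The proof naturally splits into three parts: construction of a structure satisfying (1) and (2), verification that it satisfies the stated \emph{projective extension property}, and a uniqueness argument.

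\emph{Construction.} Since $\mathcal{F}$ is countable up to isomorphism and each $A\in\mathcal{F}$ is finite, the collection of all morphisms $f\colon B\to A$ in $\mathcal{F}$ (with $A,B$ ranging over a fixed set of representatives) is itself countable; enumerate it as $(f_k)_{k\in\mathbb{N}}$. I would build an inverse sequence $(\mathbb{F}_n, p^n_m, \mathbb{N})$ with bonding maps $p^n_m$ in $\mathcal{F}$, recursively, so that two bookkeeping demands are met. First, using property~(3) (the joint projection property) repeatedly, arrange that for every $A\in\mathcal{F}$ there is some $n$ and a morphism $\mathbb{F}_n\to A$ in $\mathcal{F}$; this secures clause~(1). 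Second, to secure clause~(2), at stage $n$ consider the (finitely many relevant) pairs consisting of a morphism $\phi\colon \mathbb{F}_n\to A$ already ``visible'' in the sequence and a morphism $g\colon B\to A$ from the enumeration; apply the projective amalgamation property (4) to $\phi$ and $g$ to obtain $\mathbb{F}_{n+1}$ together with $p^{n+1}_n\colon\mathbb{F}_{n+1}\to\mathbb{F}_n$ and $\psi\colon\mathbb{F}_{n+1}\to B$ in $\mathcal{F}$ with $\phi\circ p^{n+1}_n = g\circ\psi$. A single diagonalization interleaving these two types of tasks over all $n$ produces the sequence. Set $\mathbb{F}=\varprojlim(\mathbb{F}_n,p^n_m)$, which lies in $\mathcal{F}^\omega$ by definition of $\mathcal{F}^\omega$.

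\emph{Extension property.} Now suppose $f\colon\mathbb{F}\to A$ and $g\colon B\to A$ are morphisms in $\mathcal{F}^\omega$ with $A,B\in\mathcal{F}$. By the description of morphisms from inverse limits in $\mathcal{F}^\omega$, $f$ factors as $f'\circ p_m$ for some $m$ and some $f'\colon\mathbb{F}_m\to A$ in $\mathcal{F}$, where $p_m\colon\mathbb{F}\to\mathbb{F}_m$ is the canonical projection. The pair $(f',g)$ was (or can be taken to have been) handled at some stage $n\ge m$ of the construction: composing $f'$ with $p^n_m$ gives a morphism $\phi\colon\mathbb{F}_n\to A$ in $\mathcal{F}$, and the construction provided $\psi\colon\mathbb{F}_{n+1}\to B$ in $\mathcal{F}$ with $\phi\circ p^{n+1}_n=g\circ\psi$. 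Then $h:=\psi\circ p_{n+1}\colon\mathbb{F}\to B$ is a morphism in $\mathcal{F}^\omega$, and $g\circ h = g\circ\psi\circ p_{n+1} = \phi\circ p^{n+1}_n\circ p_{n+1} = \phi\circ p_n = f'\circ p^n_m\circ p_n = f'\circ p_m = f$, as required. The only subtlety is the bookkeeping: one must enumerate the pairs $(f',g)$ so that each is eventually treated \emph{after} the index $m$ at which $f'$ first appears — a routine priority-list argument.

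\emph{Uniqueness.} Given two structures $\mathbb{F},\mathbb{F}'\in\mathcal{F}^\omega$ both satisfying (1) and (2), write them as inverse limits $\mathbb{F}=\varprojlim(\mathbb{F}_n,p^n_m)$, $\mathbb{F}'=\varprojlim(\mathbb{F}'_n,q^n_m)$ with bonding maps in $\mathcal{F}$. I would run a back-and-forth: starting from a morphism $\mathbb{F}\to\mathbb{F}'_0$ furnished by~(1) for $\mathbb{F}$, repeatedly apply clause~(2) — alternately for $\mathbb{F}$ against the $q$-tower and for $\mathbb{F}'$ against the $p$-tower — to build morphisms $h_i$ in $\mathcal{F}$ together with increasing sequences $(k_i),(l_i)$ satisfying $h_{2i}\circ h_{2i+1}=p^{k_{i+1}}_{k_i}$ and $h_{2i+1}\circ h_{2i+2}=q^{l_{i+1}}_{l_i}$, exactly the interleaving-factorization data that the excerpt records as characterizing an isomorphism between two members of $\mathcal{F}^\omega$. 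Taking limits of the even-indexed and odd-indexed partial composites yields mutually inverse isomorphisms between $\mathbb{F}$ and $\mathbb{F}'$ in $\mathcal{F}^\omega$.

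\emph{Main obstacle.} The genuinely delicate point — and the place where the ``strict subclass of epimorphisms'' generality matters — is ensuring that every map produced along the way is actually a \emph{morphism of $\mathcal{F}$} (e.g.\ a connected epimorphism), not merely an epimorphism; this is exactly why the amalgamation axiom~(4) is stated for morphisms and why property~(2) of the limit is phrased with morphisms in $\mathcal{F}^\omega$. As long as one never leaves the class $\mathcal{F}$ when invoking (3) and (4), the arguments above go through verbatim; the combinatorial work specific to $\mathcal{C}$ (that connected epimorphisms are closed under the relevant constructions) has already been isolated in Theorem~\ref{T:1} and Lemma~\ref{L:T1}, so here it is used as a black box. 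The rest is the familiar \Fraisse{}-theoretic bookkeeping.
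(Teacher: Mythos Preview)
Your proposal is correct and follows the same standard projective \Fraisse{} approach as the paper, which in fact only sketches the construction of the generic sequence (deferring to \cite{IS}) and omits the uniqueness argument entirely. Your bookkeeping is organized slightly differently---you enumerate pairs $(\phi,g)$ via a priority list, whereas the paper at stage $n{+}1$ handles \emph{all} morphisms from the current $H$ into $B_{n+1}$ at once---but both schemes achieve the same saturation and your extension and back-and-forth arguments are the expected ones.
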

We say that $\mathbb F$ is the {\bf projective Fra{\"i}ss{\'e} limit of ${\mathcal F}$}. The second property in the above statement is called {\bf projective extension property}. 
We briefly sketch here the construction of $\mathbb F$ out of ${\mathcal F}$. 
For more details, see \cite{IS}.

\begin{construction}
We build $\mathbb F$ as an inverse limit of a \emph{generic sequence} $(L_n,t^n_m)$ of morphisms $t^n_m\in\mathcal{F}$. By property (1) in the definition of a \Fraisse{} class we can make two countable lists $(A_n\colon n\geq 0)$, $(e_n\colon C_n\to B_n\colon n\geq 1)$ containing all isomorphism types of structures and morphisms of $\mathcal{F}$. Moreover we make sure that every morphism type contained in $\mathcal{F}$ appears infinitely often in $(e_n)$ above.  Let $L_0=A_0$. Assume that $L_n$ has been defined together with all maps $t^n_i$, for all $i<n$. Using property (3) in the definition of a \Fraisse{} class we get $H\in\mathcal{F}$ together with maps $f\colon H\to L_n$, $g\colon H\to A_{n+1}$. Notice now that since $H$ is finite, there is a finite list $s_1,\ldots,s_k$ of morphism types  from $H$ to $B_{n+1}$ in $\mathcal{F}$. Using  $k$-many times projective amalgamation we get $f'\colon H'\to H$ and $d_j\colon H'\to C_{n+1}$ in $\mathcal{F}$ with $s_j\circ f'=e_{n+1}\circ d_j$ for all $j\leq k$. Set $L_{n+1}=H'$ and $t^{n+1}_i = t^{n}_i \circ f \circ f'$.   It is not difficult to see that the way ``saturated''  $(L_n,t^n_m)$ with respect to $(A_n)$ and $(e_n)$ endows $\mathbb{F}$ with properties (1) and (2) of Theorem \ref{T: characterization} above.
\end{construction}

As a consequence of Theorems ~\ref{T:1},\ref{T: characterization}, we can now consider  projective Fra{\"i}ss{\'e} limit ${\mathbb M}$ of $\mathcal{C}$. We call ${\mathbb M}$ the {\bf Menger prespace}.

\begin{theorem}\label{T: Menger is Menger}
The Menger prespace $\mathbb{M}$ is a prespace containing cliques of size at most $2$. Its topological realization $|\mathbb{M}|$ is the Menger curve.
\end{theorem}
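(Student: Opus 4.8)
The plan is to prove the two combinatorial assertions about $\mathbb{M}$ by the standard pull-back method for \Fraisse{} limits, and then to recognize $|\mathbb{M}|$ via a topological characterization of the Menger curve. Write $\mathbb{M}=\varprojlim(L_n,t^n_m)$ for the generic sequence and $f_n\colon\mathbb{M}\to L_n$ for the projections. The only consequence of Theorem~\ref{T: characterization} I will use is the following \emph{pull-back principle}: for every $n$ and every connected epimorphism $g\colon B\to L_n$ with $B\in\mathcal{C}$ there is a morphism $h\colon\mathbb{M}\to B$ in $\mathcal{C}^\omega$ with $g\circ h=f_n$; moreover $h$, being a morphism of $\mathcal{C}^\omega$, is a graph homomorphism, so it preserves $R$. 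Recall also that $R^{\mathbb{M}}(x,y)$ holds iff $R^{L_n}(f_n(x),f_n(y))$ holds for all $n$.

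\emph{Transitivity of $R^{\mathbb{M}}$, i.e.\ that $\mathbb{M}$ is a prespace.} I would first isolate a combinatorial lemma: every $A\in\mathcal{C}$ admits a connected epimorphism $g\colon B\to A$ with $B\in\mathcal{C}$ such that $g(N[b])$ is a clique of $A$ for every vertex $b$ of $B$, where $N[b]$ denotes the closed neighbourhood of $b$. For the construction, let $B$ have vertex set $\{(a,K)\colon a\in A,\ K\text{ a maximal clique of }A\text{ containing }a\}$, declare $(a,K)$ and $(a',K')$ adjacent precisely when $a=a'$ or $K=K'$, and put $g(a,K)=a$: each fibre $g^{-1}(a)$ is a complete graph (hence connected), and $g$ is surjective on edges since every edge of $A$ lies in a maximal clique, so $g$ is a connected epimorphism by Lemma~\ref{L:T1}; $B$ is connected because $A$ is; and $g(N[(a,K)])=\{a\}\cup K=K$. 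Now suppose $R^{\mathbb{M}}(x,y)$ and $R^{\mathbb{M}}(y,z)$. Fix $n$, take such a $g\colon B\to L_n$, and pull it back to $h\colon\mathbb{M}\to B$ with $g\circ h=f_n$. Since $h$ preserves $R$, both $h(x)$ and $h(z)$ lie in $N[h(y)]$, so $f_n(x)=g(h(x))$ and $f_n(z)=g(h(z))$ lie in the clique $g(N[h(y)])$ of $L_n$; thus $R^{L_n}(f_n(x),f_n(z))$. As $n$ was arbitrary, $R^{\mathbb{M}}(x,z)$ follows.

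\emph{Cliques of $\mathbb{M}$ have at most two elements.} Here I would use barycentric subdivision: for $A\in\mathcal{C}$ with at least two vertices, let $\mathrm{sd}(A)$ be the graph with vertex set $V(A)\sqcup E$, where $E$ is the set of non-loop edges of $A$, in which $a\in V(A)$ is adjacent to $e\in E$ exactly when $a\in e$; choosing an endpoint $\sigma(e)\in e$ for each $e\in E$ and putting $g(a)=a$, $g(e)=\sigma(e)$ gives a connected epimorphism $g\colon\mathrm{sd}(A)\to A$ (each fibre $g^{-1}(a)=\{a\}\cup\{e\colon\sigma(e)=a\}$ is a star centred at $a$, so Lemma~\ref{L:T1} applies), and $\mathrm{sd}(A)$ is bipartite, hence has no clique of size $3$. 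If $\mathbb{M}$ had three distinct, pairwise $R$-related points $x,y,z$, then for all large $n$ the points $f_n(x),f_n(y),f_n(z)$ would be distinct and pairwise $R^{L_n}$-related, and pulling back such a $g\colon\mathrm{sd}(L_n)\to L_n$ to $h\colon\mathbb{M}\to\mathrm{sd}(L_n)$ would make $h(x),h(y),h(z)$ three pairwise $R$-related vertices with distinct images under $g$, hence three distinct vertices forming a $3$-clique in $\mathrm{sd}(L_n)$ --- a contradiction. Thus every clique of $\mathbb{M}$ has at most two elements; together with transitivity this says that $R^{\mathbb{M}}$ is a closed equivalence relation all of whose classes have one or two elements.

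\emph{Identifying $|\mathbb{M}|$.} By Theorem~\ref{T: Peano <--> prespaces}, $|\mathbb{M}|$ is a Peano continuum, and I would finish by verifying Anderson's topological characterization: a Peano continuum is homeomorphic to the Menger curve iff it is one-dimensional, has no local cut points, and has no non-empty open subset embeddable in the plane. One-dimensionality is the easy part: the quotient map $\pi\colon\mathbb{M}\to|\mathbb{M}|$ is a closed surjection all of whose fibres have at most two points and $\mathbb{M}$ is zero-dimensional, so by the classical theorem that a closed surjection with all fibres of cardinality at most two raises dimension by at most one, $\dim|\mathbb{M}|\le 1$; and $\dim|\mathbb{M}|\ge 1$ since $|\mathbb{M}|$ is non-degenerate (as $\mathbb{M}$ is an infinite zero-dimensional compactum whose cliques have at most two elements, $R^{\mathbb{M}}\ne\mathbb{M}\times\mathbb{M}$). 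The remaining two conditions I would reduce, using the neighbourhood bases of Lemma~\ref{L: basis} together with the pull-back principle, to combinatorial properties of $\mathcal{C}$: that for every $A\in\mathcal{C}$ and vertex $a_0$ there is a connected epimorphism $g\colon B\to A$ for which the ``punctured star'' around $g^{-1}(a_0)$ is connected, and one for which the star around $g^{-1}(a_0)$ contains a subdivision of $K_5$ (or of $K_{3,3}$). I expect these last reductions --- producing, for an arbitrary finite connected graph, refinements in $\mathcal{C}$ that are simultaneously ``$2$-connected around a vertex'' and ``non-planar around a vertex'' in a way that survives the at-most-$2$-to-$1$ quotient $\pi$ --- to be the main obstacle; by contrast, the prespace property and the clique bound are formal consequences of the two refinement lemmas above.
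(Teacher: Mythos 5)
Your combinatorial half is correct, and in fact more explicit than what the paper records. The maximal-clique cover $B=\{(a,K)\}$ with $g(N[b])$ a clique is a clean device for transitivity of $R^{\mathbb{M}}$ (the paper leaves the prespace claim essentially implicit), and your barycentric-subdivision refinement killing $3$-cliques is precisely the device the paper itself uses inside its proof of $1$-dimensionality. Your dimension bound via the closed, at-most-$2$-to-$1$ quotient of a $0$-dimensional compactum is also a legitimate alternative to the paper's nerve argument.

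The genuine gap is in the final identification, and you flag it yourself. Anderson's characterization requires three things beyond being a Peano continuum: $1$-dimensionality, no local cut points, and no nonempty open subset embeddable in the plane. You verify only the first, and the last two are where the entire content of the theorem lives --- an arc or a dendrite is already a $1$-dimensional Peano continuum, so what you have proved so far does not distinguish $|\mathbb{M}|$ from those. The paper sidesteps exactly this difficulty by switching to Bestvina's characterization (the unique $1$-dimensional Peano continuum with the disjoint arcs property) and verifying that property with a single combinatorial move: given $f\colon\mathbb{M}\to A$ inducing a fine cover, pull $f$ back along the projection $J\times A\to A$, where $J$ is the three-point path on $\{0,\tfrac12,1\}$; the sets $\pi(h^{-1}(\{0\}\times A))$ and $\pi(h^{-1}(\{1\}\times A))$ are disjoint, and each meets every basic neighborhood in a path-connected set, so the two given arcs can be rerouted into them. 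If you insist on Anderson's characterization, the reductions you postulate are not routine: ``nowhere planar'' requires producing, inside every $\pi(f^{-1}(a))$, an \emph{embedded} copy of $K_5$ or $K_{3,3}$, i.e.\ a system of arcs in the quotient meeting only at prescribed endpoints, and ``no local cut points'' requires an analogous arc-routing argument around a deleted point; both are at least as hard as the disjoint-arcs verification, so nothing is saved by the switch. As it stands, the identification of $|\mathbb{M}|$ with the Menger curve is not established.
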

\begin{proof}
The Menger curve is  the unique $1$-dimensional, Peano continuum
with the disjoint arcs property (\cite{Be}, see also \cite{An,MOT}). Recall that  a space $X$ has the disjoint arcs property if every continuous map $ \{0,1\}\times [0,1] \mapsto X$ can be uniformly approximated by maps which send $\{0\}\times[0,1]$ and $\{1\}\times[0,1]$ to disjoint sets. 

By Theorem \ref{T: Peano <--> prespaces}, we know that $|{\mathbb M}|$ is a Peano continuum.  To show that $|\mathbb{M}|$ is $1$-dimensional we find for every open cover a refinement whose nerve is one-dimensional. Let $\mathcal{V}$ be any open cover of $|\mathbb{M}|$ and let $f\colon \mathbb{M}\to A$ be any $f\in\mathcal{C}^{\omega}$ with $A\in\mathcal{C}$, so that $\mathcal{V}_f=\{\pi(f^{-1}(a))\colon a\in A\}$ refines $\mathcal{V}$. Let  $g\colon B \to A$ be in $\mathcal{C}$, so that $B$ has no cliques of size $3$. For example one can barycentrically subdivide $A$ and map the new vertexes to either of its two neighbors. The projective extension property of $\mathbb{M}$ provides us with a further refinement $\mathcal{V}_{h}=\{\pi(h^{-1}(b))\colon b\in B\}$ of $\mathcal{V}_f$. Notice that since $B$ has no cliques of size $3$, the nerve of $\mathcal{V}_{h}$ is isomorphic to $B$. Since $|\mathbb{M}|$ is a regular topological space and $\mathcal{V}_{h}$ is finite, we can find for every $W\in\mathcal{V}_{h}$ an open $U_W\supseteq W$, so that $\{U_W\colon W\in\mathcal{V}_{h}\}$ has the same nerve as $\mathcal{V}_{h}$ and still refines $\mathcal{V}$.

For the disjoint arcs property, let $\gamma_0,\gamma_1\colon [0,1]\to |\mathbb{M}|$ be two maps and let $\mathcal{V}$ be an open cover of $|\mathbb{M}|$. We will find disjoint $\gamma'_0,\gamma'_1\colon [0,1]\to |\mathbb{M}|$ which are $\mathcal{V}$-close to $\gamma_0$ and $\gamma_1$, that is,  for every $x\in[0,1]$, there is $V\in\mathcal{V}$, so that both  $\gamma_i(x),\gamma'_i(x)$ lie in $V$. As in the previous paragraph, let $\mathcal{V}_f=\{\pi(f^{-1}(a))\colon a\in A\}$ be a refinement of $\mathcal{V}$ and consider an open cover $\mathcal{U}_f=\{U_a\colon a\in A\}$ refining of $\mathcal{V}$, with $U_a\supseteq\pi(f^{-1}(a))$, having the same nerve as $\mathcal{V}_f$. Notice that for every $i\in\{0,1\}$ there is a finite cover $\mathcal{V}^i$ of $[0,1]$ with connected open intervals, and an assignment $\alpha_i\colon\mathcal{V}^i\to A$, so that $\gamma_i(V)\subseteq U_a$, for every $V\in\mathcal{V}^i$ with $\alpha_i(V)=a$.   Let $J$ be the unique graph on domain $\{0,\frac{1}{2},1\}$ so that $R^J(j,j')$ if and only if $|j-j'|\leq \frac{1}{2}$, and notice that the canonical projection $\rho\colon J\times A\to A$ is in $\mathcal{C}$. Hence by the projective extension property of $\mathbb{M}$ we have a connected epimorphism $h\colon\mathbb{M}\to J\times A$ so that $f=\rho\circ h$. Using the fact that $\pi(h^{-1}(X))$ is path-connected for every connected subset $X$ of $J\times A$, it is easy now to construct a  
paths $\gamma'_0$ and $\gamma'_1$ which are $\mathcal{V}$-close to the original paths and that moreover, $\gamma_i([0,1])\subset  \pi( h^{-1}(\{i\}\times A))$. 
\end{proof}

\section{The combinatorics of homogeneity} \label{S: Homogeneity}

In Theorem~\ref{T:2} below, we prove an injective homogeneity result for $\mathbb{M}$ analogous to the main result for $|\mathbb{M}|$ in \cite{MOT}. 
In Corollary \ref{C:hmg}, we recover Anderson's homogeneity result for the Menger curve $|\mathbb{M}|$. We note that, as in Section \ref{S:ApproximateProjectiveHomogeneity}, an appropriate version of {\em projective} homogeneity can always be obtained naturally and without much difficulty for any continuum which has been  presented as a topological realization of some projective Fra{\"i}ss{\'e} limit; see \cite{BK} and \cite{IS} for example.
 Here we provide the first example of an  {\em injective} homogeneity  property that is  obtained using projective  Fra{\"i}ss{\'e} theoretic methods.

Let $K$ be a closed subgraph of $\mathbb{M}$. We say that $K$ is {\bf locally non-separating} if for each clopen connected $W$, the set $W\setminus K$ is connected. 
\begin{theorem}\label{T:2}
If $K=[K]$ and $L=[L]$ are locally non-separating subgraphs of $\M$, then each isomorphism from $K$ to $L$ extends to an automorphism of $\mathbb{M}$.
\end{theorem}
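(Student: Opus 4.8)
The plan is to realize the desired automorphism of $\mathbb{M}$ by a back-and-forth construction in the category $\mathcal{C}^\omega$, arranged so that on the sub-presentations that yield $K$ and $L$ it collapses to the given isomorphism $\phi\colon K\to L$. Fix the canonical presentation $\mathbb{M}=\varprojlim(M_n,r^n_m)$ with projections $r_n\colon\mathbb{M}\to M_n$. Since $K=[K]$ and $L=[L]$ are closed subgraphs of the $0$-dimensional graph $\mathbb{M}$, we have $K=\varprojlim\big(r_n(K),R^{M_n}\res r_n(K)\big)$ and likewise for $L$, and $\phi$ is then witnessed by a zig-zag of finite-graph maps between cofinal sub-families of the graphs $r_n(K)$ and $r_n(L)$. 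By the zig-zag description of isomorphisms in $\mathcal{C}^\omega$ recalled in Section \ref{S: Topological graphs and Peano continua}, an automorphism of $\mathbb{M}$ is precisely a zig-zag between two copies of $(M_n,r^n_m)$; so it is enough to build one which, when restricted to the $K$- and $L$-subsystems, is the zig-zag for $\phi$.

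The combinatorial engine is the closure of $\mathcal{C}$ under a graph \emph{mapping cylinder} operation, which I will isolate as a separate tool: for a connected epimorphism $p\colon B\to A$ in $\mathcal{C}$ there is a finite connected graph $\mathrm{Cyl}(p)\in\mathcal{C}$ that contains $A$ and $B$ as subgraphs, has an extra edge from each $b\in B$ to $p(b)\in A$, and admits a connected collapsing epimorphism $c_p\colon\mathrm{Cyl}(p)\to A$ restricting to the identity on $A$ and to $p$ on $B$; moreover this operation localizes along subgraphs. Granting this, the construction reduces to a single \emph{one-step extension rel $K,L$} lemma. In the inductive step of the back-and-forth one is handed a connected epimorphism $\sigma\colon\mathbb{M}\to D$ in $\mathcal{C}^\omega$ together with its traces on $K$ and $L$ (recording agreement with $\phi$ so far) and a morphism $g\colon E\to D$ of $\mathcal{C}$ through which the next approximation must factor; one must produce $D'\in\mathcal{C}$ and connected epimorphisms $e\colon D'\to E$ and $q=g\circ e\colon D'\to D$ so that the traces of $q$ now match one more level of the zig-zag for $\phi$. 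The graph $D'$ is obtained from $E$ by gluing a copy of $\mathrm{Cyl}$ over the image of $K$ (resp., at the dual steps, of $L$) to install there exactly the prescribed map, leaving $E$ untouched elsewhere; the projective extension property of $\mathbb{M}$ (Theorem \ref{T: characterization}) then lifts $\sigma$ along $q$ to an honest morphism $\mathbb{M}\to D'$ in $\mathcal{C}^\omega$.

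The hypothesis that $K$ and $L$ are locally non-separating is exactly what keeps the map produced in this step a \emph{connected} epimorphism and not merely an epimorphism. By Lemma \ref{L: basis} it suffices to test preimages of clopen connected sets, i.e. sets of the form $\sigma^{-1}(C)$ with $C\subseteq D$ connected. For $W=\sigma^{-1}(C)$ the cylinder gluing attaches new vertices only over $W\cap K$, and the preimage of $C$ in $D'$ stays connected precisely because $W\setminus K$ is connected, so the attached piece meets a single connected chunk rather than bridging two pieces of $W\setminus K$; the range-side version uses local non-separation of $L$. A little bookkeeping is needed because $K$ and $L$ are only closed, not clopen: one works against the clopen connected approximations to $K$ and $L$ furnished by Lemma \ref{L: basis} and passes to the limit.

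Finally one interleaves the two agendas --- at the ``saturation'' steps absorbing the next bonding maps $r^{n+1}_n$ on each side so the resulting zig-zag is a total isomorphism $\mathbb{M}\to\mathbb{M}$, and at the ``$\phi$'' steps advancing one level of the zig-zag for $\phi$ --- and passes to the inverse limit, obtaining an automorphism $\Phi$ of $\mathbb{M}$ with $\Phi\res K=\phi$. I expect the one-step extension lemma to be the main obstacle: it is where one must simultaneously keep the morphism a connected epimorphism and honor the rigid constraint imposed by $\phi$ over $K$ and $L$, and it is there that both the mapping-cylinder closure of $\mathcal{C}$ and the local non-separation hypothesis enter in an essential way.
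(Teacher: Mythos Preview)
Your overall architecture coincides with the paper's: reduce to a back-and-forth, isolate a single one-step lifting lemma, and prove that lemma via a graph mapping-cylinder construction with the local non-separation hypothesis supplying connectedness. The paper's one-step is exactly the \emph{Lifting Property}: given $f\colon\mathbb{M}\to A$ in $\mathcal{C}^\omega$, $g\colon B\to A$ in $\mathcal{C}$, and a homomorphism $p\colon K\to B$ with $g\circ p=f\res K$, find $h\colon\mathbb{M}\to B$ in $\mathcal{C}^\omega$ with $g\circ h=f$ and $h\res K=p$.

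There is, however, a genuine gap in how you close the inductive step. You build $D'$ by gluing a cylinder over the image of $K$ in $E$ and then invoke the projective extension property to lift $\sigma$ along $q\colon D'\to D$. But the extension property yields only \emph{some} lift $\mathbb{M}\to D'$; nothing in your description forces $K$ to land in the glued cylinder piece, so the restriction to $K$ is uncontrolled and the step does not close. The paper handles this with two moves you have conflated. First (Lemma~\ref{L: Mapping cylinder}, the hard part), one uses local non-separation directly on $\mathbb{M}$, via Lemma~\ref{L:0}, to manufacture a map $\tilde f\colon\mathbb{M}\to C_\alpha$ in $\mathcal{C}^\omega$ with $r_\alpha\circ\tilde f=f$ and $\tilde f\res K$ landing precisely in the $X$-copy inside $C_\alpha$; this is where ``$W\setminus K$ connected'' is actually spent, not on a finite map $D'\to E$. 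Second, one observes that $g$ extends to $g^*\colon C_\beta\to C_\alpha$ in $\mathcal{C}$ with $(g^*)^{-1}(X)=X$; the extension property then lifts $\tilde f$ to $\tilde h\colon\mathbb{M}\to C_\beta$, and the condition $(g^*)^{-1}(X)=X$ \emph{forces} $\tilde h(K)\subseteq X$, so that $h=r_\beta\circ\tilde h$ satisfies $h\res K=p$. Without this two-stage mechanism---first pinning $K$ into the $X$-stratum of a cylinder over $A$, then exploiting that the $X$-stratum is stable under $(g^*)^{-1}$---the bare extension property cannot deliver the prescribed restriction. Two smaller points: your cylinder (edges only from $b$ to $p(b)$) differs from the paper's (edges from $x$ to $\alpha(x')$ for every neighbor $x'$), and the latter is what makes $r_\alpha$ and $g^*$ lie in $\mathcal{C}$; and your appeal to Lemma~\ref{L: basis} for testing connectedness is misplaced, since that lemma concerns neighborhood bases in $|K|$, not the criterion for a map to be a connected epimorphism.
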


For the proof of Theorem~\ref{T:2} we run a standard ``back and forth'' argument based on a lifting property for inclusions $K\hookrightarrow\mathbb{M}$ of locally non-separating sets; see page \pageref{Lifting property}. 
This lifting property strengthens the  projective extension property of $\mathbb{M}$. 

Viewed from an abstract homotopy theoretic standpoint, the lifting property 
suggests that maps in $\mathcal{C}$ relate to the above inclusion $K\hookrightarrow\mathbb{M}$ in the same way that trivial fibrations relate to  cofibrations within a model category.  The analogy with model categories is also reflected in the way we prove the lifting property: we define a combinatorial analogue of the \emph{mapping cylinder construction} for homomorphisms between graphs and we show that for any $f\colon\mathbb{M}\to A$ in $\mathcal{C}^{\omega}$, the induced map from $K$ to $A$ factors through a map of the form $r\circ i$, where $i$ is an inclusion and $r$ a mapping cylinder retraction. Before we describe the mapping cylinder construction we start with two general lemmas. The next result is probably known, but we could not find a reference for it.

\begin{lemma}\label{L:0} A closed subset $K$ of $\M$ is locally non-separating if and only if for each clopen connected set $W\supseteq K$ and 
each clopen set $V$ with $K\subseteq V\subseteq W$ there exists a clopen set $U$ such that $K\subseteq U\subseteq V$ and $W\setminus U$ is 
connected. 
\end{lemma}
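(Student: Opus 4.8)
The plan is to prove both directions, with the $(\Leftarrow)$ direction being essentially trivial and the $(\Rightarrow)$ direction requiring a compactness argument. For $(\Leftarrow)$: assume the stated factorization property and let $W$ be a clopen connected set; I want $W \setminus K$ connected. If $K \subseteq W$, apply the hypothesis with $V = W$ to get a clopen $U$ with $K \subseteq U \subseteq W$ and $W \setminus U$ connected; since $U$ can be taken to shrink toward $K$ (iterating, or noting $W \setminus U \subseteq W \setminus K$), one concludes $W \setminus K$ is the union of an increasing chain of connected clopen sets $W \setminus U$, hence connected — actually more directly, in the definition of locally non-separating we only need one such $W$, so I will argue $W \setminus K$ is connected by writing it as an increasing union $\bigcup_n (W \setminus U_n)$ of connected sets where $U_n$ shrinks to $K$; such a shrinking sequence exists by compactness and $0$-dimensionality since $K$ is closed. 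For $W$ not containing $K$, replace $W$ by a clopen connected $W' \supseteq W \cup K$ (using connectedness and local connectedness of $\mathbb M$ to enlarge $W$ to a connected clopen superset of $W \cup K$) and deduce the claim for $W$ from the claim for $W'$ — here one uses that $W' \setminus K$ connected together with $W$ clopen connected forces $W \setminus K$ connected, which needs a small topological argument about connected subsets of a connected set.

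For $(\Rightarrow)$, which is the main direction: assume $K$ is locally non-separating, and fix a clopen connected $W \supseteq K$ and a clopen $V$ with $K \subseteq V \subseteq W$. I want a clopen $U$ with $K \subseteq U \subseteq V$ and $W \setminus U$ connected. The idea is to take $U$ to be a sufficiently fine clopen neighborhood of $K$ inside $V$: write $W \setminus K$ as an increasing union of connected clopen sets, or rather observe that since $K$ is locally non-separating, $W \setminus K$ is connected, and then I must "thicken" $W \setminus K$ slightly to a clopen set $W \setminus U$ that is still connected. The key point is that connectedness of a clopen subset of a topological graph in $\mathcal{C}^\omega$ is a "finitary" condition: by Proposition \ref{P:char} and the structure of $\mathbb M$ as an inverse limit $\varprojlim(L_n, t^n_m)$, a clopen set of the form $W \setminus U$ corresponds, for large enough $n$, to a subset of the finite graph $L_n$, and it is connected iff the corresponding finite subgraph is connected. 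Since $W \setminus K = \bigcap_n (W \setminus U_n)$ for a decreasing sequence of clopen $U_n \supseteq K$ (with $\bigcap_n U_n = K$), and each $W \setminus U_n$ is clopen, connectedness of the intersection together with compactness implies that $W \setminus U_n$ is connected for all sufficiently large $n$ — this is the standard fact that a decreasing intersection of compact connected sets is connected, run in reverse using that the "connectedness defect" is witnessed at a finite stage. Then choosing $n$ large enough that also $U_n \subseteq V$, I set $U = U_n$.

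The main obstacle I anticipate is the claim that if $W \setminus K = \bigcap_n (W \setminus U_n)$ is connected, then $W \setminus U_n$ is connected for large $n$. In a general compact space, a decreasing intersection of connected compacta can be connected while every term fails to be connected, so this claim is \emph{false} in full generality and must use something special — namely that $\mathbb M$ is a zero-dimensional topological graph in $\mathcal{C}^\omega$, where "connected" for a clopen set $X$ means the finite graph $g_n(X) \subseteq L_n$ is connected for all large $n$ (equivalently, for the least $n$ at which $X = g_n^{-1}(g_n(X))$). Concretely: a disconnection of $W \setminus U_n$ into clopen pieces with no $R$-edge between them is a \emph{clopen} partition, and if $W \setminus U_n$ were disconnected for infinitely many $n$, a König's-lemma / compactness argument on the tree of such partitions would produce a disconnection of $W \setminus K$, contradiction. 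So the real content is to set this compactness argument up carefully using the $\mathcal{C}^\omega$ description of $\mathbb M$ from Proposition \ref{P:char}, and to handle the bookkeeping of making $U_n$ simultaneously a graph-fiber set, contained in $V$, and fine enough. I would also double-check the edge case where $W \setminus K = \emptyset$, i.e. $W = K$ is clopen: then $K$ itself is disconnected unless $K$ is a single clique, and "$W \setminus U$ connected" fails for the empty set by the paper's convention, so one needs either to note this case cannot arise for nonempty $W \supsetneq$ a point or to observe the statement is about $W \supseteq K$ with room to spare; I expect the intended reading makes this vacuous or handled by the convention, and I will flag it.
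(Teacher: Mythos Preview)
Your $(\Rightarrow)$ argument has a genuine gap. The central claim --- that for an \emph{arbitrary} decreasing sequence of clopen sets $U_n\downarrow K$, the complement $W\setminus U_n$ is eventually connected --- is false even for graphs in $\mathcal{C}^\omega$, so the K\"onig/compactness justification cannot succeed as stated. Here is a counterexample. Take the topological graph on the convergent double sequence
\[
W=\{\infty\}\cup\{a_n:n\ge 0\}\cup\{b_n:n\ge 0\},\qquad a_n,b_n\to\infty,
\]
with edges $R(a_n,a_{n+1})$, $R(\infty,a_n)$, $R(\infty,b_n)$, and $R(b_n,a_m)$ for all $m\ge 2n$. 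One checks $W\in\mathcal{C}^\omega$ and that $K=\{\infty\}$ is locally non-separating (the $a$--path together with the edges $R(b_n,a_{2n})$ makes $W\setminus\{\infty\}$ connected). But for the natural neighborhoods $U_N=\{\infty\}\cup\{a_n,b_n:n\ge N\}$ one has $b_{N-1}$ isolated in $W\setminus U_N$ for every $N\ge 2$. The disconnecting partitions here do not cohere: the ``bad'' components keep changing and are each eventually absorbed, so no limiting nontrivial partition of $W\setminus K$ is produced. Your K\"onig tree has no branch that stays nontrivial.

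The paper's proof proceeds in the opposite direction: rather than shrinking $U$ from outside and hoping the complement stays connected, it \emph{builds the complement from inside}. Write $W\setminus K=\bigcup_k V_k$ with each $V_k$ clopen connected (local connectedness), and greedily form an increasing chain $U_0\subseteq U_1\subseteq\cdots$ of connected clopen sets by adjoining $V_k$'s that keep the union connected. Connectedness of $W\setminus K$ is used only to show this chain exhausts $W\setminus K$; then compactness of $W\setminus V$ gives a finite stage $U_N\supseteq W\setminus V$, and one sets $U=W\setminus U_N$.

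Your idea can be salvaged, and the salvage is essentially the paper's argument in different clothing: fix $x_0\in W\setminus K$ and let $C_n$ be the connected \emph{component} of $x_0$ in $W\setminus U_n$. Then $C_n$ is clopen, increasing, and (this is where connectedness of $W\setminus K$ enters) $\bigcup_n C_n=W\setminus K$; now set $U=W\setminus C_N$ for $N$ large enough that $C_N\supseteq W\setminus V$. The moral is that you must track a single growing connected piece, not wait for the whole clopen complement to become connected, which it need not.

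A minor remark on $(\Leftarrow)$: your Case $W\supseteq K$ is fine once you iterate the hypothesis to produce a \emph{decreasing} sequence of good $U_n$'s, but your reduction for $W\not\supseteq K$ (``enlarge to $W'\supseteq W\cup K$ and deduce $W\setminus K$ connected from $W'\setminus K$ connected'') does not work as written --- that implication is false in general. The paper simply declares this direction immediate; the substantive content is entirely in $(\Rightarrow)$.
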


\begin{proof} Only the direction from left to right needs a proof. 
Fix a connected clopen set $W$. Since $W\setminus K$ is open, we have $W\setminus K = \bigcup_{k\in {\mathbb N}} V_k$ for 
some $V_k$ clopen and connected. Let $k(0)=0$ and define $U_0=V_0$. Given $U_n$, let $k(n+1)$ be the smallest natural number such that 
$V_{k(n+1)}\not\subseteq U_n$ and $U_n\cup V_{k(n+1)}$ is connected, if such $k(n+1)$ exists. Otherwise, let $k(n+1)=k(n)$. Let $U_{n+1} = U_n\cup V_{k(n+1)}$. 

Since $U_n\subseteq U_{n+1}$ for each $n$, by compactness, it will suffice to show that 
\begin{equation}\label{E:ooo}
W\setminus K = \bigcup_{n\in {\mathbb N}}U_n. 
\end{equation}
This follows as in the last part of the proof of Lemma \ref{L:ApproximateHomogeneityProperty}: assume that
 $x\in W\setminus K$ and $x\not\in \bigcup_{n\in {\mathbb N}} U_n$; let $k(x)$ be such that  $x\in V_{k(x)}$; check that 
$[V_{k(x)}]\cap \bigcup_{n\in {\mathbb N}} U_n =\emptyset$; and derive a contradiction from the fact that $W\setminus K$ is connected.
%To see this equality, 
%let $x\in W\setminus K$ and $x\not\in \bigcup_{n\in {\mathbb N}} U_n$. Let $k(x)$ be such that 
%\begin{equation}\label{E:ppp}
%x\in V_{k(x)}. 
%\end{equation}
%Note that 
%\begin{equation}\label{E:qqq}
%[V_{k(x)}]\cap \bigcup_{n\in {\mathbb N}} U_n =\emptyset.
%\end{equation}
%Otherwise, $[V_{k(x)}]\cap U_{n_0}\not=\emptyset$ for some $n_0$, and it follows that, for each $n>n_0$, $k(n)\not= k(n+1)$ and 
%$k(n)<k(x)$, which is contradictory. It follows from \eqref{E:ppp} and \eqref{E:qqq} that for some $A\subseteq {\mathbb N}$, we have 
%\[
%(W\setminus K)\setminus \bigcup_{n\in {\mathbb N}} U_n = \bigcup_{k\in A}V_k\;\hbox{ and }\; [\bigcup_{k\in A}V_k] \cap \bigcup_{n\in {\mathbb N}} U_n=\emptyset. 
%\]
%Since $W\setminus K$ is connected, we get that $A=\emptyset$ and \eqref{E:ooo} follows. 
\end{proof}

\begin{lemma}\label{L: separation}
If $K\in\mathcal{C}^{\omega}$ is a prespace,  $Z\subseteq V \subseteq  K$,  $Z=[Z]$, and $V$ is open, then there is $W\subseteq K$ open with $Z\subseteq W$  and $[W]\subseteq V$. If moreover $Z$ is closed, then $W$ can be additionally chosen to  clopen.
\end{lemma}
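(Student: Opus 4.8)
The plan is to build, out of $V$, a canonical open $R^K$-saturated set $U$ that is sandwiched between $Z$ and $V$, and then to thin $U$ down to a clopen set when $Z$ is closed. The one fact doing the real work is that $[F]$ is closed whenever $F\subseteq K$ is closed: indeed $[F]$ is the image of $R^K\cap(K\times F)$ under the first-coordinate projection $K\times K\to K$, and since $F$ is a closed subset of the compact space $K$, that set is compact, hence its image is closed. This is exactly the observation already used for $[P^c]$ in the proof of Lemma~\ref{L: basis}. I would apply it to $F=V^c=K\setminus V$, which is closed because $V$ is open, to conclude that $[V^c]$ is closed; moreover $[V^c]$ is $R^K$-saturated directly from the definition of $[\,\cdot\,]$. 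Then I set $U=K\setminus[V^c]$, an open and $R^K$-saturated set.

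Next I would verify the two inclusions $Z\subseteq U\subseteq V$. For $U\subseteq V$: if $x\in U$ then $x\notin[V^c]$, so in particular $x\notin V^c$ (since $x\in V^c$ would give $x\in[x]\subseteq[V^c]$), i.e. $x\in V$. For $Z\subseteq U$: if some $z\in Z$ lay in $[V^c]$, then $R^K(z,y)$ for some $y\in V^c$, and since $Z=[Z]$ we would have $[z]\subseteq Z$, hence $y\in Z\subseteq V$, contradicting $y\in V^c$. Because $U$ is saturated, $[U]=U\subseteq V$, so $W=U$ already proves the first assertion of the lemma.

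For the ``moreover'' part, assume $Z$ is also closed. Then $Z$ is a closed subset of the open set $U$ inside the zero-dimensional, compact, metrizable space $K$, so covering $Z$ by finitely many basic clopen subsets contained in $U$ produces a clopen set $W$ with $Z\subseteq W\subseteq U$; then $[W]\subseteq[U]=U\subseteq V$, as required. I do not anticipate a genuine obstacle here: the only non-bookkeeping step is the closedness of $[V^c]$, which is the projection-along-a-compact-factor argument recalled above, and the rest is the standard fact that a closed set and an open set surrounding it can be separated by a clopen set in a zero-dimensional compact metrizable space.
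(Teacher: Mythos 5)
Your proof is correct. It takes a route that is recognizably dual to, but cleaner in execution than, the paper's argument: the paper works pointwise, showing for each $z\in Z$ that some clopen $W_z\ni z$ has $[W_z]\subseteq V$ by a contradiction argument with sequences $x_n\to z$, $y_n\in[x_n]\cap V^c$, a convergent subsequence of $(y_n)$, and the closedness of $R^K$; the union of the $W_z$ gives the open $W$, and compactness of a closed $Z$ gives the clopen refinement immediately. You instead make a single global construction, $W=K\setminus[V^c]$, using the projection-along-a-compact-factor argument (already recorded in the proof of Lemma~\ref{L: basis}) to see that $[V^c]$ is closed, and checking the sandwich $Z\subseteq W\subseteq V$ directly from $Z=[Z]$ and reflexivity of $R^K$. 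Both proofs rest on the same underlying mechanism --- $R^K$ closed plus compactness of $K$ forces saturations of closed sets to be closed --- but yours produces an explicit canonical saturated open set and avoids sequences, at the small cost of needing the standard clopen-separation step in a zero-dimensional compact metrizable space for the ``moreover'' clause, which the paper's pointwise clopen $W_z$'s deliver for free. Your extra step is routine and correctly executed, so there is no gap.
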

\begin{proof}
It suffices to show that for every $z\in Z$ we can find $W_z$ clopen with $z\in W_z$ and $[W_z]\subseteq V$. If such $W_z$ doesn't exist then one can find sequences $(x_n)$and $(y_n)$ so that $y_n\in[x_n]$, $x_n$ converging to $z$, and $y_n\in V^c$. By compactness of $V^c$ we can assume that $(y_n)$ converges to $y\in Y$. But since $R^K$ is closed this implies that $y\in [x]$, contradicting that $Z=[Z]\subseteq V$.  
\end{proof}

Let $X$ be any finite (reflexive) graph and let $\alpha\colon X\to A$ be a graph homomorphism with $A\in \mathcal{C}$. We assume that $\mathrm{dom}(A)\cap\mathrm{dom}(X)=\emptyset$.
The {\bf mapping cylinder} $C_{\alpha}$ of $\alpha$ is the unique graph on domain $\mathrm{dom}(A)\cup \mathrm{dom}(X)$ with:
\begin{enumerate}
\item $C_{\alpha}\res \mathrm{dom}(A)=A$ and $C_{\alpha}\res \mathrm{dom}(X)=X$; 
\item for each $x\in X$ and $a\in A$, there is an edge in $C_{\alpha}$ between $x$ and $a$ if and only if $a=\alpha(x')$ for some $x'\in X$ with $R^X(x,x')$.
\end{enumerate}
The mapping cylinder $C_{\alpha}$ comes together with two natural graph inclusions $A,X\hookrightarrow C_{\alpha}$ and a {\bf canonical retraction} $r_{\alpha}\colon C_{\alpha}\to A$ given by: $r_{\alpha}(x)=\alpha(x)$, if $x\in X$; and $r_{\alpha}(x)=x$, otherwise. It is easy to check that both $C_{\alpha},r_{\alpha}$ are in $\mathcal{C}$.

\begin{lemma}\label{L: Mapping cylinder}
Let $K=[K]$ be a locally non-separating subgraph of $\mathbb{M}$; let $X$ be a finite graph; let $\alpha\colon X\to A$ be graph homomorphism, with $A\in\mathcal{C}$. For every $f\colon\mathbb{M}\to A$ in $\mathcal{C}^{\omega}$ and every graph homomorphism $q\colon K\to X$ with $\alpha\circ q=f\res K$, there is $\tilde{f}\colon\mathbb{M}\to C_{\alpha}$ in $\mathcal{C}^{\omega}$, with $r_{\alpha}\circ \tilde{f}=f$ and $\tilde{f}\res K= q$. 
\begin{center}
\begin{tikzcd}[column sep=small]
K \arrow[dd,hook']\arrow[rrrr, "q"] & & &  & X \arrow[dd,"\alpha"] \arrow[dll, hook'] \\
   & & C_{\alpha}\arrow[drr, "r_{\alpha}"] & & \\
\mathbb{M} \arrow[rrrr, "f"]\arrow[urr, "\tilde{f}",dotted] & & & &  A
\end{tikzcd}
\end{center}
\end{lemma}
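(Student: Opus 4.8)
The plan is to build $\tilde f$ as a limit of finite approximations, exploiting the projective extension property of $\mathbb{M}$ together with the combinatorial structure of the mapping cylinder $C_\alpha$ and the locally non-separating hypothesis on $K$. First I would fix the given $f\colon\mathbb{M}\to A$ in $\mathcal{C}^\omega$, so that $f$ factors as $f_m$ composed with some finite map $K_m\to A$ for a presentation $\mathbb{M}=\varprojlim(K_n,t^n_m)$; by passing to a large enough level we may assume $A$ itself is one of the $K_n$ and $f=f_n$ is a projection. The point of the mapping cylinder is that a map into $C_\alpha$ is exactly the data of a map into $A$ together with a compatible ``partial lift'' over the part that is to be sent into $X$; so the target $\tilde f$ should send the neighborhood $K$ into the copy of $X$ inside $C_\alpha$ via $q$, and should send the ``rest'' of $\mathbb{M}$ into the copy of $A$ via $f$. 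The retraction identity $r_\alpha\circ\tilde f=f$ and the restriction identity $\tilde f\res K=q$ are then built in, provided we can realize this as a genuine morphism in $\mathcal{C}^\omega$, i.e., a connected epimorphism that is approximable by maps in $\mathcal{C}$.

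The key technical step will be to interpolate between $K$ and the complement. Using Lemma~\ref{L: separation} (with $Z=K$, which is $R$-invariant and closed) I would find a clopen set $W_0\supseteq K$ with $[W_0]$ still small, and then using Lemma~\ref{L:0} — the strengthened characterization of locally non-separating sets — I can shrink $W_0$ to a clopen $U$ with $K\subseteq U$ and $\mathbb{M}\setminus U$ connected; in fact one wants a nested sequence of such clopen sets squeezing down to $K$ and, crucially, a clopen set $U$ together with a slightly larger clopen $U'$ so that $\mathbb{M}\setminus U'$ remains connected, giving the ``room'' for a collar. The role of the locally non-separating hypothesis is precisely to guarantee that removing the neighborhood of $K$ over which we are lifting does not disconnect anything, so that the resulting finite graph one maps onto stays connected and the candidate map stays a connected epimorphism. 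I would then realize the lift at a finite stage: find $n$ large so that the partition $\{f_n^{-1}(b)\}$ refines the clopen sets $U\subseteq U'$ and so that $q$ is induced by a finite homomorphism $K_n\res(\text{part over }U)\to X$; define a finite graph $G$ which is a ``partial mapping cylinder'' over $K_n$ — glue a copy of the relevant part of $X$ in along $\alpha$ over the clopen piece $U'$, keep $K_n$ elsewhere — together with a retraction $G\to K_n$. One checks $G$ and the retraction lie in $\mathcal{C}$ using Lemma~\ref{L:T1}: connectedness of point-preimages of the retraction is a local check, and connectedness of $G$ itself follows from connectedness of $K_n$ together with the fact that the glued copy of $X$ is attached along edges (the mapping-cylinder edge condition) and that $\mathbb{M}\setminus U'$ is connected.

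Having the finite target $G$ with a map $G\to C_\alpha$ compatible with $r_\alpha$ and a map $G\to K_n$, I would invoke the projective extension property of $\mathbb{M}$ to get $g\colon\mathbb{M}\to G$ in $\mathcal{C}^\omega$ with $f_n=(\text{retraction})\circ g$, and then compose with $G\to C_\alpha$ to obtain a candidate $\tilde f_0\colon\mathbb{M}\to C_\alpha$ satisfying $r_\alpha\circ\tilde f_0=f$. The remaining point — and I expect this to be the main obstacle — is arranging $\tilde f_0\res K=q$ exactly, rather than merely approximately: the projective extension property only controls the composite with the retraction, so $g$ need not send $K$ into the copy of $X$ the way $q$ prescribes. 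The fix is a back-and-forth / successive-correction argument: alternate between (i) applying the extension property to refine, and (ii) post-composing with an automorphism-like adjustment of $G$ supported near the $X$-part that corrects the discrepancy over $K$ by one more level of the inverse system, using again that over the clopen collar $U'\setminus U$ there is enough connectedness freedom to reroute the lift. Iterating and taking the limit — exactly the style of the ``back and forth'' argument advertised for Theorem~\ref{T:2} — produces a connected epimorphism $\tilde f\in\mathcal{C}^\omega$ with both $r_\alpha\circ\tilde f=f$ and $\tilde f\res K=q$, as required. I would keep a close eye on the bookkeeping ensuring that the corrections converge, i.e., that each correction is supported on a smaller and smaller clopen neighborhood of $K$, which is where Lemma~\ref{L:0} and the nested sequence of non-separating clopen sets do the real work.
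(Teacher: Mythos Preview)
You correctly isolate the central difficulty: a single application of the projective extension property yields $\tilde f_0\colon\mathbb M\to C_\alpha$ with $r_\alpha\circ\tilde f_0=f$, but with no control over $\tilde f_0\res K$. Your proposed remedy---an iterative correction scheme---has a genuine gap. Each correction step must push $\tilde f_n\res K$ closer to $q$ while preserving $r_\alpha\circ\tilde f_n=f$, yet the extension property gives no leverage whatsoever on the values over a prescribed closed set; that control is exactly what the lemma is meant to supply, so invoking it mid-iteration is circular. The ``automorphism-like adjustments of $G$'' you allude to are not specified, and for a fixed finite target there is no evident family of such maps commuting with $r_\alpha$ and making the discrepancy on $K$ shrink. (A side remark: your preliminary picture of $\tilde f$ as ``$q$ on $K$, $f$ on the rest'' cannot be taken literally either, since $q$ need not surject onto $X$, so such a map would fail to be an epimorphism onto $C_\alpha$; part of $\mathbb M\setminus K$ must also be sent into $X$.)

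The paper achieves $\tilde f\res K=q$ in a single step, with no iteration, by inserting a \emph{second} mapping cylinder. First one refines $f$ to $h\colon\mathbb M\to B$ in $\mathcal C^\omega$ (with $g\circ h=f$, $g\in\mathcal C$) so that $B$ carries disjoint subgraphs $D_x\subseteq g^{-1}(\alpha(x))$ with $h(q^{-1}(x))\subseteq D_x$, together with connected complementary pieces $D_a$; this is where Lemma~\ref{L:0}, Lemma~\ref{L: separation}, and the locally non-separating hypothesis are used, much as you anticipated. One then sets $E_x=D_x\cup D_{\alpha(x)}$ (arranged to be connected), takes disjoint copies $E'_x$, and forms the mapping cylinder $G$ of $\sqcup_x E'_x\to B$ with retraction $r\colon G\to B$. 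Now apply the extension property to lift $h$ through $r$, obtaining $f_0\colon\mathbb M\to G$ with $r\circ f_0=h$. Since $h(K_x)\subseteq D_x$ and $r^{-1}(D_x)\subseteq E'_x\cup D_x$, the lift $f_0$ is \emph{forced} to carry $K_x$ into $E'_x\cup D_x$. The map $f_1\colon G\to C_\alpha$ collapsing each $E'_x\cup D_x$ to $x$ and each $D_a$ to $a$ lies in $\mathcal C$, and $\tilde f=f_1\circ f_0$ satisfies both $r_\alpha\circ\tilde f=f$ and $\tilde f\res K=q$ automatically. The point is that the auxiliary target $G$ is engineered so that the fiber structure of $r$ over $h(K)$ already encodes $q$; once $h$ separates the sets $K_x$ at a finite level, one extension suffices.
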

\begin{proof} 

Let $K$, $X$,  $A$,  $\alpha$, $f$, $q$ be the data provided in the statement of Lemma \ref{L: Mapping cylinder} and set $K_x=q^{-1}(x)$, for every $x\in X$.

\begin{claim}
There is $g\colon B\to A$ in $\mathcal{C}$ and $h\colon \mathbb{M} \to B$ in $\mathcal{C}^{\omega}$, with $g\circ h=f$, together with collections $\{D_x\colon x\in X\}$ and $\{D_a\colon a\in A\}$ of subgraphs of $B$, so that if we set $B_a=g^{-1}(a)$ for all $a\in A$, we have:
\begin{enumerate}
\item $\{\mathrm{dom}(D_a)\}\cup\{\mathrm{dom}(D_x)\colon x\in X, \; \alpha(x)=a \}$ is a partition of $\mathrm{dom}(B_a)$; 
\item the image of $K_x$ under $h$ is contained in $D_x$;
\item $R^{X}(x,x')$ if and only if there is $b\in D_x$ and $b'\in D_{x'}$ with $R^B(b,b')$;
\item $R^{X}(x,x')$ for some $x'$ with $\alpha(x')=a$  if and only if there is $b\in D_x$ and $b'\in B_a$ with $R^B(b,b')$;
\item for every connected component $C$ of $D_x$ there is $c\in C$ and $b\in D_{\alpha(x)}$ with $R(b,c)$;
\item if $\widetilde{D}$ is the subgraph of $B$ on domain $\bigcup_{a\in A}\mathrm{dom}(D_a)$, then $g\res\widetilde{D}\colon \widetilde{D}\to A$ is in $\mathcal{C}$ and as a consequence $D_a$ is connected.
\end{enumerate}
\end{claim}

\begin{proof}[Proof of Claim.]
Since $\{K_x\colon x\in X\}$ is a finite collection of closed subsets of a $0$-dimensional, metrizable topological space we can find for each $x$ a clopen subset $W^0_x$ of $\mathbb{M}$ containing $K_x$ so that $W^{0}_x\cap W^{0}_x\neq\emptyset$ if and only if $x=x'$. By Lemma \ref{L: separation} we can find for each $x$ a clopen subset $W^1_x$ of $\mathbb{M}$ containing $[K_x]$ so that $[W^1_x]\cap[W^1_{x'}]\neq\emptyset$ if and only if $R^{X}(x,x')$ and $[W^1_x]\cap f^{-1}(a)\neq\emptyset$ if and only if there is $x'\in X$ with $R^{X}(x,x')$ and $\alpha(x')=a$. Finally, since $K$ is locally non-separating, we can chose for every---possibly trivial---edge $e=\{a,a'\}$ of $A$, a clopen subset  $W_e$ of $\mathbb{M}\setminus K$ so that $f(W_e)=e$.

Let now $h'\colon \mathbb{M}\to B'$ be any map in $\mathcal{C}^{\omega}$ which refines $f$ as well as the partition generated by all the sets $W^{0}_x$, $W^{1}_x$, $W_e$ collected above. Let also $g'\colon B'\to A$ be the unique map with $h'\circ g'= f$ and set $D'_x$ be the subgraph of $B'_a$ generated on domain $h'(K_x)$ and $D'_a$ be the subgraph of $B'_a$ generated on $\mathrm{dom}(B'_a)\setminus(\bigcup_x \mathrm{dom}(D'_x))$. It is easy to see that $g'\in \mathcal{C}$ and the resulting $h',g',\{D'_a\}, \{D'_x\}$ satisfy properties (1), (2), (3), (4) above. Moreover if $\widetilde{D}'$ is the subgraph of $B'$ on domain $\bigcup_{a\in A}\mathrm{dom}(D'_a)$, then $g'\res\widetilde{D}'\colon \widetilde{D}'\to A$ is an epimorphism.

Since locally non-separating sets are nowhere dense, for every $a\in A$ we can chose a clopen set $V'_a$ of $f^{-1}(a)\setminus K$ so that $h'(V'_a)$ intersects every connected component of every graph $D'_x$ with $a=\alpha(x)$. For every $a\in A$, set $W_a=f^{-1}(a)$, $K_a=W_a\cap K$, $V_a=\big((h')^{-1}(\bigcup_{x\colon \alpha(x)=a}D'_x)\big) \setminus V'_a$. By Lemma \ref{L:0} we get a clopen subset $U_a$ of $\mathbb{M}$ with $K_a\subseteq U_a\subseteq V_a$ so that $W_a\setminus U_a$ is connected. As above we can find $h\colon \mathbb{M}\to B$ in $\mathcal{C}^{\omega}$ and $g''\colon B\to B'$ in $\mathcal{C}$ with $g''\circ h= h'$, and so that $h$ refines the partition generated by $\{U_a\colon a\in A\}$. Set $g=g'\circ g''$ and $B_a=g^{-1}(a)$. Let also $D_a$ be the subgraph of $B_a$ on domain $h(W_a\setminus U_a)$ and let $D_x$ be the subgraph of $B_a$ on domain $(g'')^{-1}(D'_x)\cap h(U_a)$. Notice that all properties we established for $g'$ are preserved under refinements and that $g$ additionally satisfies properties (5) and (6).
\end{proof}

Given the configuration of the above claim, let $E_x$ be the subgraph of $B$ generated on $\mathrm{dom}(D_x)\cup \mathrm{dom}(D_{\alpha(x)})$. Properties (5) and (6) above imply that $E_x$ is connected. Let $E'_x$ be an isomorphic copy of $E_x$ and let $i_x\colon E'_x\to B$ be an embedding witnessing this isomorphism. Let $G$ be the mapping cylinder with respect to the map $i\colon \sqcup_{x\in X} E'_x\to B$, where $i=\sqcup_{x\in X}i_x$, and let $r\colon G\to B$ be the associated retraction.  By projective extension property we get $f_0\colon \mathbb{M}\to G$ with $(g \circ  r) \circ f_0=f$. By properties (3), (4), (5), (6) above, and the fact that $(f_1)^{-1}(x)$ is connected for all $x\in X$, we have the map $f_1\colon G\to C_{\alpha}$ that  maps $E'_x\cup D_x$ to $x$ and $D_a$ to $a$ is in $\mathcal{C}$. It is also immediate that $r_{\alpha}\circ f_1= g\circ r $. To finish the proof we set $\tilde{f}=f_1\circ f_0$. As a consequence we have $ r_{\alpha}\circ\tilde{f}=r_{\alpha}\circ f_1\circ f_0= g\circ r \circ f_0=f$ and $\tilde{f}(K_x)=f_1\circ f_0(K_x)\subseteq f_1(\mathrm{dom}(E'_x)\cup \mathrm{dom}(D_x))=\{x\}$.
\end{proof}

We can turn now to the proof of the main theorem of this section.

\begin{proof}[Proof of Theorem \ref{T:2}]
The proof of Theorem \ref{T:2} is a standard ``back and forth'' argument based on the following lifting property. Notice that the content of the lower commuting triangle is our usual projective extension property.

\begin{property}\label{Lifting property}
Let $K=[K]$ be a locally non-separating subgraph of $\mathbb{M}$. Let also $g\colon B\to A$ in $\mathcal{C}$ and $f\colon \mathbb{M}\to A$ in $\mathcal{C}^{\omega}$. Then for every graph homomorphism $p\colon K\to B$, with $g\circ p=f\res K$, there is $h\colon\mathbb{M}\to B$ in $\mathcal{C}^{\omega}$ with $g\circ h=f$ and $h\res K = p$. 
\begin{center}
\begin{tikzcd}[column sep=large, row sep=large]
K \arrow[r,  "p"] \arrow[d, hook] & B \arrow[d, "g" ]\\
\mathbb{M}  \arrow[ur, "h", dotted ,swap]  \arrow[r, "f",swap] & A 
\end{tikzcd}
\end{center}
\end{property}
We are left to show that the above lifting property holds. Notice first that if $g\colon B\to A$ is in $\mathcal{C}$ and $\beta\colon X\to B$, $\alpha\colon X\to A$, are graph homomorphisms with $g\circ\beta=\alpha$, then there is a unique extension $g^{*}\colon C_{\beta}\to C_{\alpha}$ of $g$ which makes the right diagram below commute. It is easy to check that $g^{*}$ is in $\mathcal{C}$.
\begin{center}
\begin{tikzcd}[column sep=small]
& & X \arrow[dll, "\beta",swap] \arrow[drr, "\alpha"] & & \\
B \arrow[rrrr, "g"] & & & & A
\end{tikzcd}
\quad\quad $\rightsquigarrow$ \quad\quad
\begin{tikzcd}[column sep=small]
& & X \arrow[dll, hook'] \arrow[drr, hook] & & \\
C_{\beta} \arrow[d, "r_{\beta}",swap] \arrow[rrrr, "g^*"] & & &  & C_{\alpha} \arrow[d,"r_{\alpha}",swap]   \\
B \arrow[rrrr, "g"] & & & &  A
\end{tikzcd}
\end{center}

Let now $f,g,p$ be as in the statement of the Lifting Property for $\mathbb{M}$ and let $X$ be a graph isomorphic to the graph that is the image of $K$ in $B$ under $p$. Let also $\beta\colon X\to B$ be this isomorphism and let $q\colon K\to X$ be the unique map with $\beta\circ q=p$. Notice that $\beta$ is not an embedding---in general---but it is always an injective homomorphism. Set $\alpha\colon X\to A$ be the homomorphism $g\circ \beta$. 

By Lemma \ref{L: Mapping cylinder} we have $\tilde{f}\colon\mathbb{M}\to C_{\alpha}$ in $\mathcal{C}^{\omega}$, with $r_{\alpha}\circ \tilde{f}=f$ and $\tilde{f}\res K=q$. Let $g^{*}\colon C_{\beta}\to C_{\alpha}$ be the extension of $g$ to $C_{\beta}$ described above. By the projective extension property of $\mathbb{M}$ we get a map $\tilde{h}\colon\mathbb{M}\to C_{\beta}$ with $g^{*}\circ\tilde{h}=\tilde{f}$. It follows that the map $h\colon\mathbb{M}\to B$ defined by $r_{\beta}\circ\tilde{h}$ is the desired map. To see this notice that $f=r_{\alpha}\circ \tilde{f}=r_{\alpha}\circ g^{*}\circ \tilde{h}=g\circ r_{\beta}\circ\tilde{h}=g\circ h$. By a similar diagram chasing, using that $g^{*}\res X=\mathrm{id}_X$ and $(g^{*})^{-1}(X)=X$ we get that $p=h\res K$.
\end{proof}

We finish this section by showing how one can derive Anderson's homogeneity for the Menger curve \cite{An} from Theorem~\ref{T:2}.

\begin{corollary}[Anderson~\cite{An}]\label{C:hmg}
Any bijection between finite subsets of $|\mathbb{M}|$ extends to a homeomorphism of $|\mathbb{M}|$. 
\end{corollary}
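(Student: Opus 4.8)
The strategy is to reduce Anderson's homogeneity statement for $|\mathbb M|$ to the combinatorial homogeneity Theorem~\ref{T:2} for $\mathbb M$. Given a bijection $\phi\colon F_0\to F_1$ between finite subsets $F_0,F_1\subseteq |\mathbb M|$, the first step is to lift each point $[x]\in F_i$ to a point of $\mathbb M$; since $\mathbb M$ has cliques of size at most $2$ (Theorem~\ref{T: Menger is Menger}), each fiber $\pi^{-1}([x])$ has one or two points, and choosing one representative $x$ for each produces finite sets $\widetilde F_0,\widetilde F_1\subseteq \mathbb M$ together with a bijection $\widetilde\phi\colon\widetilde F_0\to\widetilde F_1$ covering $\phi$. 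A finite discrete subset of $\mathbb M$ is trivially a closed subgraph (it carries only the reflexive loops), it is automatically $R$-invariant after possibly enlarging each singleton to its whole clique, and $\widetilde\phi$ is an isomorphism of these (edgeless) subgraphs.

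**Verifying the hypotheses of Theorem~\ref{T:2}.**
The key point to check is that such finite (clique-closed) subgraphs $K=\widetilde F_i$ are \emph{locally non-separating}: for every clopen connected $W$, the set $W\setminus K$ must be connected. This is where one uses that $|\mathbb M|$, and hence $\mathbb M$, has no small-scale separations — informally, $\mathbb M$ is ``$1$-dimensional and nowhere locally cut by finitely many points.'' Concretely I would argue at the level of a finite quotient: given a clopen connected $W$ and writing $W=g^{-1}(Q)$ for some $g\colon\mathbb M\to C$ in $\mathcal C^\omega$ with $Q$ a connected subgraph of a finite graph $C$, one uses the projective extension property to refine $g$ through a graph in which the (finitely many) vertices hit by $K$ have been ``doubled'' or locally thickened, so that removing the corresponding clopen piece leaves a connected finite graph; pushing this back gives that $W\setminus K$ is connected. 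The cleanest route is to observe that a single point of $\mathbb M$ is locally non-separating because one can always route paths in a finite connected graph around any prescribed vertex after a suitable refinement (barycentric subdivision plus the extension property, exactly as in the proof of Theorem~\ref{T: Menger is Menger}), and then that a finite union of locally non-separating closed sets with pairwise disjoint clopen neighborhoods is again locally non-separating.

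**Conclusion.**
Once $\widetilde F_0,\widetilde F_1$ are seen to be locally non-separating clique-closed subgraphs and $\widetilde\phi$ an isomorphism between them, Theorem~\ref{T:2} yields an automorphism $\Phi$ of $\mathbb M$ extending $\widetilde\phi$. Since $\Phi$ is an isomorphism of topological graphs, it respects $R^{\mathbb M}$, hence descends to a homeomorphism $h\colon|\mathbb M|\to|\mathbb M|$ with $h([x])=[\Phi(x)]$; this is well defined and a homeomorphism because $\Phi$ and $\Phi^{-1}$ are continuous and $R$-preserving, and $\pi$ is a quotient map. Finally $h$ extends $\phi$: for $[x]\in F_i$ we chose $x\in\widetilde F_i$ with $\pi(x)=[x]$, and $h([x])=[\Phi(x)]=[\widetilde\phi(x)]=\phi([x])$ by the compatibility of $\widetilde\phi$ with $\phi$.

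**Main obstacle.**
The only nontrivial step is establishing that finite subsets of $\mathbb M$ are locally non-separating; everything else is bookkeeping about lifting along $\pi$ and descending automorphisms. This is precisely the combinatorial shadow of the classical fact that the Menger curve has no local cut points, and I expect it to follow from the projective extension property applied after a barycentric-type subdivision that separates the finitely many relevant vertices, together with the elementary observation that in a finite connected graph one can reroute connecting paths around any given vertex once enough subdivision has been performed.
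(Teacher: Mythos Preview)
Your reduction to Theorem~\ref{T:2} has a genuine gap at the lifting step. Theorem~\ref{T:2} requires $K=[K]$, i.e., the subgraph must be $R$-saturated. You pick one representative per class to obtain a bijection $\widetilde\phi\colon\widetilde F_0\to\widetilde F_1$, and then remark that $R$-invariance is obtained ``after possibly enlarging each singleton to its whole clique.'' But these two moves are incompatible: if $[x]\in F_0$ has a two-point fiber $\{x,x'\}$ while $\phi([x])\in F_1$ has a one-point fiber $\{y\}$, then after saturation the subgraph on the $F_0$-side carries a genuine edge $\{x,x'\}$ while the corresponding piece on the $F_1$-side is a single vertex, and no graph isomorphism can cover $\phi$ on these pieces. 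So in general there is no isomorphism between $[\widetilde F_0]$ and $[\widetilde F_1]$ to which Theorem~\ref{T:2} applies.

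This is precisely the obstacle the paper's proof is designed to remove. Rather than lifting $\phi$ directly, the paper first establishes a Claim: for every finite $F\subseteq|\mathbb M|$ there is a homeomorphism $\psi$ of $|\mathbb M|$ such that each class in $\psi(F)$ is a \emph{singleton} in $\mathbb M$. The Claim is proved by collapsing the (finitely many) two-point classes in $F$ to form a quotient $\mathbb M'$, and then checking that $\mathbb M'$ still satisfies the characterizing properties of Theorem~\ref{T: characterization} (via a subdivision argument that routes the finitely many bad edges away), so $\mathbb M'\cong\mathbb M$. Once both finite sets consist of singleton classes, the full preimages $\pi^{-1}(F_i)$ are $R$-saturated \emph{and} in bijection, and your argument goes through. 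Your treatment of local non-separation for finite sets is in the right spirit and matches the paper's one-line justification via Lemma~\ref{L:0} and the projective extension property; the missing ingredient is the ``make all classes singletons'' step.
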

\begin{proof}

Let $\phi_0\colon F\to F'$ be a bijection between finite subsets of $|\mathbb{M}|$. If $\phi_0$ lifts through $\pi\colon \mathbb{M}\to |\mathbb{M}|$ to a bijection $\phi^{\pi}_0$ between $\cup F$ and  $\cup F'$ then by Theorem \ref{T:2},  $\phi^{\pi}_0$ extends to a global automorphism $\phi^{\pi}\colon\mathbb{M}\to\mathbb{M}$, and $\phi=\pi\circ\phi^{\pi}\circ\pi^{-1}$ is the required homeomorphism extending $\phi_0$. 
Here we used that finite subsets of $\mathbb{M}$ are locally non-separating, 
which easily follows from Lemma \ref{L:0} and the projective extension property of $\mathbb{M}$.  Hence the proof reduces to the following claim. 

\begin{claim}
For every finite subset $F$ of $|\mathbb{M}|$, there exists a homeomorphism $\psi\colon|\mathbb{M}|\to|\mathbb{M}|$ so that every element $[y]$ in $\psi(F)=\{\psi([x])\colon [x]\in F\}$ is a singleton (as a subset of $\mathbb{M}$).
\end{claim}

\noindent{\em Proof of Claim.}
Let $E$ be the equivalence relation on $\mathbb{M}$ defined by $x E x'$ if either $x=x'$; or if $x'\in [x]$ and $[x]\in F$. Let $\mathbb{M}'=\mathbb{M}/E$, let $\rho\colon \mathbb{M}\to \mathbb{M}'$ be the quotient map, and let $R'$ be the equivalence relation on $\mathbb{M}'$, that is the push-forward of $R$ under $\rho$. Since $\rho$ is $R$-invariant, $R'$ is well defined. Notice that $\rho$ is continuous since $E$ is compact and hence the induced map $|\rho|\colon |\mathbb{M}|\to \mathbb{M}'/R'$ on the quotients is a homeomorphism.  It suffices to show that there exists an isomorphism $\phi\colon\mathbb{M}\to \mathbb{M}'$ in $\mathcal{C}^{\omega}$.  If so, the map $ \pi\circ\phi^{-1}\circ (\pi')^{-1} \circ |\rho|$, where $\pi'\colon\mathbb{M}'\to \mathbb{M}'/R'$ is the quotient map, is the desired homeomorphism $\psi$. Hence, by Theorem \ref{T: characterization}, we have to check that $\mathbb{M}'$ (with the relation $R'$) is in $\mathcal{C}^{\omega}$ and that it satisfies properties (1) and (2) therein.

To see that $\mathbb{M}'$ is in $\mathcal{C}^{\omega}$ notice first that the union of any two $R$-connected clopen subsets of $\mathbb{M}$ is clopen and $R$-connected. Since $F$ is finite one can easily generate a basis for the topology of $\mathbb{M}'$ consisting of clopen $R'$-connected sets.  The rest follows from Proposition~\ref{P:char}. 

We now check that $\mathbb{M}'$ satisfies property (1) from Theorem \ref{T: characterization}. Let $A\in \mathcal{C}$ and let $n$ be a number strictly larger than the cardinality of $F$. Consider the graph $\delta^nA$ which is attained by subdividing every edge of $A$ $n$-times, that is, each non-trivial edge $(v,v')$ of $A$ is replaced a chain $(v,v_1), (v_1,v_2), \ldots, (v_n,v')$ of $n$-many edges. Notice that for every map  $(v,v')\mapsto_{\gamma}\{0,\ldots n\}$ which assigns to each edge $(v,v')$ of $A$ a number less or equal to $n$ we define a map $d_{\gamma}\colon \delta^nA\to A $ collapsing every vertex $v_m$ with $m>\gamma((v,v'))$ to $v'$ and  every vertex $v_m$ with $m\leq\gamma((v,v'))$ to $v$.  Let $f\colon \mathbb{M}\to \delta^nA $ be any $\mathcal{C}^{\omega}$ map. By the choice of $n$, there is an assignment $\gamma$ as above so that for every edge $(v,v')$ there is no $[x]\in F$ with $f([x])=(v_k,v_{k+1})$, where $k=\gamma((v,v'))$. The map $g\colon \mathbb{M}\to A$ with $g=d_{\gamma}\circ f$ is easily shown to push forward through $\rho$ to a $\mathcal{C}^{\omega}$ map $g^{\rho}\colon \mathbb{M}'\to A$. 

Property (2) from Theorem \ref{T: characterization} is proved for $\mathbb{M}'$ in a similar fashion. 
Let $f\colon \mathbb{M}'\to A$ in $\mathcal{C}^{\omega}$ and $g\colon B\to A$ in $\mathcal{C}$. Notice that $f\colon \rho \colon \mathbb{M}\to A$ is in $\mathcal{C}^{\omega}$. We can now construct the desired map $h\colon \mathbb{M}'\to B$ by relativizing the argument of the previous paragraph with respect to the constrains $f$ and $g$. 
The claim and, therefore, also the corollary follow. \end{proof}

\section{The combinatorics of universality} \label{S: Universality}

In Theorem \ref{T:preMengerProjUniversality} we prove for $\mathbb{M}$ a combinatorial analogue of a strengthened version of Anderson--Wilson's theorem.  
We use this to establish a variant of Anderson--Wilson's  theorem for the Menger curve $|\mathbb{M}|$; see Corollary \ref{C:MengerProjUniversality}. 
Notice that the following weak version of Corollary \ref{C:MengerProjUniversality} already follows from the projective extension property of $\mathbb{M}$ and 
Theorem \ref{T: Peano <--> prespaces}.

\begin{proposition}\label{P: universality}
Every Peano curve $X$ is the continuous surjective image of the Menger curve $|\mathbb{M}|$ under a continuous and connected map $|h|\colon |\mathbb{M}|\to X$.
\end{proposition}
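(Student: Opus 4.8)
The plan is to realize $X$ as a topological realization of a prespace in $\mathcal{C}^\omega$ and then transport a $\mathcal{C}^\omega$-map $\mathbb{M}\to K$ down to the quotients. More precisely, by Theorem~\ref{T: Peano <--> prespaces}, since $X$ is a Peano continuum there is a prespace $K\in\mathcal{C}^\omega$ with a homeomorphism $\theta\colon |K|\xrightarrow{\sim} X$. Write $K=\varprojlim(K_n,g^n_m)$ with $g^n_m\in\mathcal{C}$, and let $g_n\colon K\to K_n$ be the projections; these are morphisms in $\mathcal{C}^\omega$. I would now feed the inverse system $(K_n,g^n_m)$ into the projective extension property of $\mathbb{M}$ inductively: using property~(1) of Theorem~\ref{T: characterization}, pick a morphism $h_0\colon \mathbb{M}\to K_0$ in $\mathcal{C}^\omega$; given $h_n\colon \mathbb{M}\to K_n$ in $\mathcal{C}^\omega$, apply the projective extension property (property~(2) of Theorem~\ref{T: characterization}) to the morphisms $h_n\colon\mathbb{M}\to K_n$ and $g^{n+1}_n\colon K_{n+1}\to K_n$ to obtain $h_{n+1}\colon\mathbb{M}\to K_{n+1}$ in $\mathcal{C}^\omega$ with $g^{n+1}_n\circ h_{n+1}=h_n$. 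The maps $h_n$ are coherent with the bonding maps, so they induce a continuous map $h\colon\mathbb{M}\to K$ with $g_n\circ h=h_n$ for all $n$.

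Next I would check that $h$ is a connected epimorphism, i.e. that $h\in\mathcal{C}^\omega$ in the sense of Proposition~\ref{P:char}. Surjectivity on vertices and edges is clear from compactness together with surjectivity of each $h_n$ (a point of $K$ is a thread $(y_n)$ with $y_n\in K_n$; each $h_n^{-1}(y_n)$ is a nonempty closed set, and these form a decreasing family with nonempty intersection, and similarly for edges using that $R^K$ is the inverse limit of the $R^{K_n}$). For connectedness: a closed connected subset $Z$ of $K$ is an intersection of a decreasing sequence of connected clopen sets of the form $g_n^{-1}(C_n)$ with $C_n$ a connected subset of $K_n$; since $h_n$ is a connected epimorphism, $h_n^{-1}(C_n)=h^{-1}(g_n^{-1}(C_n))$ is connected and clopen in $\mathbb{M}$, and $h^{-1}(Z)$ is the intersection of this decreasing sequence of connected clopen sets, hence connected. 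So $h\colon\mathbb{M}\to K$ is a morphism in $\mathcal{C}^\omega$.

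Finally I would push $h$ through the quotient maps. Let $\pi\colon\mathbb{M}\to|\mathbb{M}|$ and $\pi_K\colon K\to|K|$ be the quotient maps. Since $h$ is a graph homomorphism and $R^{\mathbb M}$, $R^K$ are the respective edge relations, $h$ carries $R^{\mathbb M}$-classes into $R^K$-classes, so $\pi_K\circ h$ is constant on $R^{\mathbb M}$-classes and factors as $|h|\circ\pi$ for a (necessarily continuous, by the quotient property and compactness) map $|h|\colon|\mathbb{M}|\to|K|$; then $\theta\circ|h|\colon|\mathbb{M}|\to X$ is the desired map, which I relabel $|h|$. It is surjective since $h$, $\pi_K$, $\theta$ all are. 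For connectedness of $|h|$: if $Z\subseteq X$ is closed and connected, then $\theta^{-1}(Z)\subseteq|K|$ is closed and connected, and by Lemma~\ref{L: basis} its preimage under $\pi_K$ is obtained as an intersection of sets of the form $g^{-1}(Q)$ with $Q$ connected in some $A\in\mathcal{C}$ — more directly, $\pi_K^{-1}$ of a closed connected set in $|K|$ is a closed $R^K$-invariant set whose image is connected, and one checks (as in the proof of Theorem~\ref{T: Peano <--> prespaces}, $(2)\Rightarrow(1)$) that it is itself connected in the topological-graph sense; then $|h|^{-1}(Z)=\pi(h^{-1}(\pi_K^{-1}(\theta^{-1}(Z))))$ is the $\pi$-image of a closed connected $R^{\mathbb M}$-invariant set, hence connected. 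The one point requiring a little care — the main obstacle — is precisely this last connectedness transfer: verifying that $\pi$-images and $\pi$-preimages of closed connected sets stay connected, which rests on the fact that these sets are invariant under the edge equivalence relation and on the characterization of connected closed subsets of prespaces as nested intersections of connected clopen sets, exactly as used in the proof of Theorem~\ref{T: Peano <--> prespaces}.
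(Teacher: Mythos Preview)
Your proof is correct and follows essentially the same approach as the paper: represent $X$ as $|K|$ via Theorem~\ref{T: Peano <--> prespaces}, build $h\colon\mathbb{M}\to K$ by iterating the projective extension property of $\mathbb{M}$ along the inverse system $(K_n,g^n_m)$, and then pass to the quotients. The paper's argument is terser, simply asserting that the induced map $|h|$ is ``easy to see'' continuous and connected, whereas you spell out the connectedness transfer through $\pi$ and $\pi_K$ in more detail; but the strategy is identical.
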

\begin{proof}
By Theorem \ref{T: Peano <--> prespaces}, the space $X$ is homeomorphic to $|K|$ for some prespace $K=\varprojlim (K_n,g^{n}_m)\in\mathcal{C}^{\omega}$. By the first property of Theorem \ref{T: characterization} we get a connected epimorphism $h_0\colon \mathbb{M}\to K_0$. We lift $h_0$ to a connected epimorphism $h\colon \mathbb{M}\to K$ by repeated application of the second property of Theorem \ref{T: characterization}. Since $h$ is a graph homomorphism cliques in $\mathbb{M}$ map to cliques in $K$. As a consequence $h$ induces a map $|h|\colon \mathbb{M}\to|K|$ between the quotients which is easy to see that it is continuous and connected. 
\end{proof}

To strengthen the features of the map $h$ in Proposition \ref{P: universality} we will isolate certain combinatorial properties of $\mathcal{C}$ and incorporate them in 
the construction of the map $h$ above. Our arguments can be adapted to other \Fraisse{} classes $\mathcal{F}$ which satisfy the analogous properties.

\begin{definition}\label{Def 1-exact}
Let $\mathcal{F}$ be a projective \Fraisse{} class. The projective amalgam $f',g'$ of $f,g$ below is called {\bf structurally exact} (with respect to $f$), if for every $B_0\subseteq B$  with
  $f\res B_0 $ in  $\mathcal{F}$, if we set $D_0= (f')^{-1}(B_0)$, then we have $D_0\in \mathcal{F}$ and $g'\res D_0 \in\mathcal{F}$.
\begin{center}
\medskip
\begin{tikzcd}
D \arrow[d, "g'", swap]\arrow[r, "f'"]  &  B \arrow[d,"f"]\\
C \arrow[r, "g"] & A
\end{tikzcd}
\medskip
\end{center}
 We say that $\mathcal{F}$ has {\bf structurally exact amalgamation} if every $f,g$ as above admit structurally exact amalgam. We say that $\mathcal{F}$ has {\bf two--sided structurally exact amalgamation} if every $f,g$ as above admit an amalgam that is structurally exact with respect to both $f$ and $g$.
\end{definition}

Structural exactness is a natural generalization of the well studied notion of \emph{exactness}. Recall that an amalgamation diagram, as in Definition \ref{Def 1-exact}, is exact if for every $b\in B, c\in C$ with $f(b)=g(c)$ there is $d\in D$ so that $f'(d)=b$ and $g'(d)=c$; see \cite{Ge}. 
In the context of Proposition \ref{P: universality}, structural exactness of $\mathcal{C}$ will allow us to strengthen the connectedness properties of the map $h$. Two--sided structural exactness together with the next property will additionally allow us to control isomorphism type of the fibers of $h$.

\begin{definition}
Let $\mathcal{F}$ be a projective \Fraisse{} class. We say that $\mathcal{F}$ admits {\bf local refinements} if for every $f\colon B_0\to A_0$ in $\mathcal{F}$ and every embedding $i\colon A_0\to A$, there is  $g\colon B\to A$ in $\mathcal{F}$ and an embedding  $j\colon B_0\to B$ so that $g\circ j= i\circ f$.
\end{definition}

\begin{lemma}
The class $\mathcal{C}$ has two--sided structurally exact amalgams and local refinements.
\end{lemma}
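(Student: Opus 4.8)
The plan is to prove the two assertions separately: first that $\mathcal{C}$ admits local refinements, then that $\mathcal{C}$ has two-sided structurally exact amalgams, using essentially the fiber-product amalgamation already exhibited in the proof of Theorem \ref{T:1}.

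\emph{Local refinements.} Given $f\colon B_0\to A_0$ in $\mathcal{C}$ and an embedding $i\colon A_0\hookrightarrow A$ with $A\in\mathcal{C}$, I would build $B$ by gluing $B_0$ onto $A$ along the image $i(A_0)$ in a way that keeps fibers connected. Concretely, let $B$ be the graph on domain $\big(\mathrm{dom}(A)\setminus i(\mathrm{dom}(A_0))\big)\sqcup \mathrm{dom}(B_0)$, with $B\res\mathrm{dom}(B_0)=B_0$, with $B$ restricted to the rest of $A$ equal to the induced subgraph, and with an edge between $b\in B_0$ and $a\in A\setminus i(A_0)$ precisely when $R^A(i(f(b)),a)$; put $j\colon B_0\hookrightarrow B$ the obvious inclusion and $g\colon B\to A$ equal to $i\circ f$ on $\mathrm{dom}(B_0)$ and the identity elsewhere. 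One checks $g$ is a well-defined homomorphism, that $g\circ j=i\circ f$, and that $g$ is an epimorphism. Connectedness of $g$ reduces, by Lemma \ref{L:T1}, to connectedness of point fibers: over $a\notin i(A_0)$ the fiber is a singleton, and over $a=i(a_0)$ the fiber is $f^{-1}(a_0)$, which is connected since $f\in\mathcal{C}$; one also checks $B$ itself is connected (here one uses that $f$ is surjective so every vertex of $A_0$ is hit). This is the routine direction.

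\emph{Two-sided structural exactness.} For $f\colon B\to A$ and $g\colon C\to A$ in $\mathcal{C}$, I would take the fiber product $D=\{(b,c)\in B\times C: f(b)=g(c)\}$ with projections $f'=p_B$, $g'=p_C$, exactly as in the proof of Theorem \ref{T:1}, where it is already shown that $p_B,p_C\in\mathcal{C}$. It remains to verify the structural-exactness condition on \emph{both} sides. Fix $B_0\subseteq B$ with $f\res B_0\in\mathcal{C}$ and set $D_0=p_B^{-1}(B_0)=\{(b,c)\in D: b\in B_0\}$; note $D_0$ is the fiber product of $f\res B_0\colon B_0\to f(B_0)$ with $g\res g^{-1}(f(B_0))\colon g^{-1}(f(B_0))\to f(B_0)$. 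Since $f\res B_0$ is a connected epimorphism onto the induced subgraph $f(B_0)$ (this is what "$f\res B_0\in\mathcal{C}$" means, once one checks $f(B_0)$ is connected, which it is as the continuous image of a connected graph), and since $g\res g^{-1}(f(B_0))$ is a connected epimorphism by restriction (its point fibers are the same $g^{-1}(\cdot)$, hence connected, and $g^{-1}(f(B_0))$ is connected because $g$ is a connected epimorphism and $f(B_0)$ is closed connected), the same fiber-product argument as in Theorem \ref{T:1} shows $D_0\in\mathcal{C}$ and $p_C\res D_0\in\mathcal{C}$. The argument is symmetric in $B$ and $C$, so the same $D$ witnesses structural exactness with respect to $g$ as well, giving the two-sided conclusion.

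\emph{Main obstacle.} I expect the only real subtlety to be bookkeeping about restrictions: one must be careful that "$f\res B_0\in\mathcal{C}$" is interpreted as $B_0$ carrying the induced graph structure and $f(B_0)$ being an induced (hence automatically connected, being a homomorphic image of a connected graph) subgraph of $A$, and that preimages like $g^{-1}(f(B_0))$ are genuinely connected subgraphs — this is where Lemma \ref{L:T1} and the closure of $\mathcal{C}$-morphisms under the relevant restrictions get used. Once these identifications are set up cleanly, both halves are immediate applications of the fiber-product amalgamation already in hand.
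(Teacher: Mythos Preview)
Your proposal is correct and follows the paper's approach: the paper's own proof simply asserts that the fiber-product amalgam from Theorem~\ref{T:1} is two-sided structurally exact and that local refinements are ``easy to check,'' and you have supplied exactly the details behind those assertions (recognizing $D_0$ as a fiber product over $f(B_0)$ and giving the natural gluing construction for local refinements). The only minor point to tidy is the appeal to Lemma~\ref{L:T1} for $g\colon B\to A$ before $B$ is known to be connected; but the proof of that lemma does not actually use connectedness of the domain, and in any case connectedness of $B$ follows once preimages of connected sets under $g$ are connected.
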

\begin{proof}
The amalgam provided in the proof of Theorem \ref{T:1} is structurally exact with respect to both $f$ and $g$ as well. It is also easy to check that  $\mathcal{C}$ admits local refinements.
\end{proof}

We can now prove the main theorem of this section.

\begin{theorem}\label{T:preMengerProjUniversality}
For every $K\in\mathcal{C}^{\omega}$ there exists a connected epimorphism $h\colon \mathbb{M}\to K$ which is open and satisfies the following properties:
\begin{enumerate}
\item for every $x\in \mathbb{M}$ there exists a collection $\mathcal{N}$ of clopen subsets of $\mathbb{M}$, with $\bigcap\mathcal{N}=[x]$, so that for every $N\in\mathcal{N}$ and for every closed connected subgraph $F$ of $h(N)\subset K$ the subgraph $h^{-1}(F)\cap N$ of $\mathbb{M}$ is connected;
\item for every closed subgraph $Q$ of $K$ that is a clique, the subgraph $h^{-1}(Q)$ of $\mathbb{M}$ is isomorphic to $\mathbb{M}$.
\end{enumerate}
\end{theorem}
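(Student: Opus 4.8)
The plan is to build the map $h\colon\mathbb{M}\to K$ as an inverse limit of a carefully chosen generic sequence, using a back-and-forth construction in which at each stage we ensure one of the two properties at a prescribed point or on a prescribed clique. Concretely, write $K=\varprojlim(K_n,g^n_m)$ with $g^n_m\in\mathcal{C}$. We construct $h$ together with a factorization $h=g_n\circ h_n$ where $h_n\colon\mathbb{M}\to K_n$ is in $\mathcal{C}^\omega$, by interleaving three kinds of steps: (a) the usual projective-extension steps that make $h_n$ converge to a genuine map into $\varprojlim K_n$; (b) ``openness'' steps; and (c) the two kinds of internal requirements coming from properties (1) and (2). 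Since $\mathbb{M}$ itself is an inverse limit of a generic sequence $(L_n,t^n_m)$, at each stage we have a finite graph $L_N$ and a map $L_N\to K_n$; the whole construction is a bookkeeping over pairs (a clopen set of $\mathbb{M}$, a finite target graph) of the sort already used in the proof of Corollary~\ref{C:hmg}.

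The key technical inputs are exactly the three combinatorial properties just isolated. For openness, I would note that an epimorphism $h\colon\mathbb{M}\to K$ with $h=g_n\circ h_n$ is open as soon as each $h_n$ is open, and openness of maps between finite graphs with the discrete topology is automatic; the real content is that the approximating maps $h_n\colon\mathbb{M}\to K_n$ can be taken so that the images of the basic clopen sets $t_N^{-1}(v)$ of $\mathbb{M}$ are ``downward closed'' enough — this is arranged using the projective extension property together with barycentric subdivision of $K_n$, exactly as in the $1$-dimensionality argument in the proof of Theorem~\ref{T: Menger is Menger}. For property (1), the point is that structural exactness of $\mathcal{C}$ lets us amalgamate so that, having refined $\mathbb{M}$ by a clopen neighborhood $N$ of $[x]$, the restriction $h_n\res N$ is again a morphism in $\mathcal{C}$ between finite connected graphs, whence preimages of connected subgraphs under it are connected by Lemma~\ref{L:T1}; iterating over a neighborhood basis of $[x]$ (as furnished by Lemma~\ref{L: basis}) and over all $x$ in a countable dense set — and then extending by density using closedness of $R^K$ — gives the full statement. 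For property (2), given a closed clique $Q\subseteq K$, I would use that $Q$, being a closed clique in a prespace-like situation, is itself (up to isomorphism in $\mathcal{C}^\omega$) either a point or a single edge $\{0,1\}$ with $0$ adjacent to $1$; two-sided structural exactness guarantees that $h^{-1}(Q)\in\mathcal{C}^\omega$ with the restricted bonding maps still in $\mathcal{C}$, and local refinements let us, at cofinally many stages, prescribe the finite quotient of $h^{-1}(Q)$ to absorb any given morphism type. One then verifies the abstract characterization of Theorem~\ref{T: characterization} for $h^{-1}(Q)$: property (1) of that theorem is immediate from surjectivity of $h$ and the $h_n$, and property (2) is exactly what the local-refinement steps were scheduled to produce. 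Hence $h^{-1}(Q)\cong\mathbb{M}$.

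The main obstacle I expect is coordinating the three families of requirements inside a single generic sequence without them interfering. The openness steps subdivide the target $K_n$, the property-(1) steps refine the source near a point, and the property-(2) steps want to impose a prescribed finite quotient on the fiber over a clique; each of these must be shown to be stable under the others (i.e. preserved by later bonding maps), so that a standard priority/bookkeeping argument closes. This is the same kind of ``the conditions are preserved under refinement'' verification that appears throughout the paper — e.g. in the Claim inside the proof of Lemma~\ref{L: Mapping cylinder}, where a list of properties (1)–(6) is pushed forward through refinements — and I would model the argument on that. The only genuinely new point is that property (1) is a \emph{local} requirement: it must be arranged at every point of a dense set and then transferred to all points, and the transfer uses that closed connected subgraphs of $K$ are nested intersections of connected clopen sets (as in the proof of Proposition~\ref{P:char}) so that connectedness of the relevant preimages, once established on a basis, passes to the intersection.
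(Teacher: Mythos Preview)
Your overall architecture --- build $h$ as an inverse limit of maps $h_i\colon M_{n(i)}\to K_i$, use structural exactness for property~(1), and use two-sided structural exactness plus local refinements to make the fiber sequences generic for property~(2) --- matches the paper. But several of your subsidiary mechanisms are wrong or unnecessary.

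\textbf{Openness.} Your argument here is confused. The claim ``$h$ is open as soon as each $h_n$ is open'' is vacuous: every continuous map from $\mathbb{M}$ to a finite discrete space is open. Openness of $h$ means that $h(f_{n(i)}^{-1}(a))$ is open in $K$ for each basic clopen $f_{n(i)}^{-1}(a)\subseteq\mathbb{M}$, and this does \emph{not} follow automatically. Barycentric subdivision is irrelevant. The paper gets openness directly from (plain) exactness of the amalgam: if at each stage $D$ is the fibered product of $M_{n(i)}$ and $K_{i+1}$ over $K_i$, exactness yields $h(f_{n(i)}^{-1}(a)) = g_i^{-1}(h_i(a))$, which is open. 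This is the entire content of the ``openness'' part, and it requires no separate scheduling step.

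\textbf{Property (1).} You propose to arrange the local connectedness condition only at a countable dense set of $x$'s and then ``transfer by density.'' This is both unnecessary and unclear: the existence of a suitable neighborhood collection $\mathcal{N}_x$ at dense $x$ does not obviously propagate, since the collections are required to intersect down to $[x]$ exactly. The paper's point is that if you use the structurally exact amalgam at \emph{every} stage, then for \emph{every} $x$ the uniform collection $\mathcal{N}=\{f_{n(i)}^{-1}(f_{n(i)}([x]))\}$ works: one shows inductively that $E_i = h_i^{-1}(g_i(F))\cap f_{n(i)}^{-1}(f_{n(i)}([x]))$ is connected, using that structural exactness preserves the property ``$h_i\upharpoonright E_i\in\mathcal{C}$'' along the tower. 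No density, no interleaving --- one amalgamation scheme handles openness and property~(1) for all points simultaneously.

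\textbf{Property (2).} Your approach is essentially the paper's, but note that $K$ is an arbitrary object of $\mathcal{C}^\omega$, not a prespace, so a closed clique $Q\subseteq K$ need not have cardinality $\leq 2$. What matters is only that $g_i(Q)$ is a clique in $K_i$, so $g^{i+1}_i\upharpoonright g_{i+1}(Q)$ is automatically in $\mathcal{C}$; two-sided structural exactness then keeps the fiber bonding maps in $\mathcal{C}$, and local refinements let you saturate the fiber sequences to make them generic.

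In short: drop the separate ``openness steps,'' drop the density argument, and recognize that the single choice of the structurally exact pullback at every stage already delivers openness and property~(1) uniformly; only property~(2) requires the extra refinement via $r\colon D'\to D$ using local refinements.
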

\begin{proof}
Fix  sequences $(M_n,f^{n}_m)$ and $(K_n,g^{n}_m)$ in $\mathcal{C}$ with $\mathbb{M}=\varprojlim(M_n,f^{n}_m)$ and $K=\varprojlim(K_n,g^{n}_m)$. We denote by $f_n$ and $g_n$ the induced maps $\mathbb{M}\mapsto M_n$ and $K \mapsto K_n$.

 We will first use the fact that $\mathcal{C}$ has structurally exact amalgams to produce map $h\colon \mathbb{M}\to K$ in $\mathcal{C}^{\omega}$ which is open and satisfies the property (1) in the statement of the theorem. Then, we will illustrate how to adjust the construction to additionally fulfill property (2) of the statement. 
 We point out that part of the argument below---deriving from exactness that the map $h$ is open---can also be found in \cite{Ge}.

We build $h$ as an inverse limit of a coherent sequence of maps $h_i\colon M_{n(i)}\to K_i$ from $\mathcal{C}$ where $(n(i): i\in\mathbb{N})$  is some increasing sequence of natural numbers. By the first property of Theorem \ref{T: characterization} we get $n(0)$ and a connected epimorphism $h_0\colon M_{n(0)}\to K_0$. Assume now that we have defined $n(i)$ and $h_i$. Setting $f=h_i$ and $g=g^{i+1}_{i}$ in initial diagram of Definition \ref{Def 1-exact} we get a structurally exact amalgam $D$, $f'\colon D\to M_{n(i)}$, $g'\colon D \to K_{i+1}$. Using the extension property of 
Theorem \ref{T: characterization} we find $n(i+1)$ and a map $p\colon M_{n(i+1)}\to D$ such that $f' \circ p=f^{n(i+1)}_{n(i)}$. Set $h_{i+1}=p\circ g'$.  
This finishes the induction and we therefore get a map $h=\varprojlim(h_i)$ in $\mathcal{C}^{\omega}$ from $\mathbb{M}$ to $K$.

\begin{claim}
For every $i\in\mathbb{N}$ and $a\in M_{n(i)}$ we have that $h(f_{n(i)}^{-1}(a))=g^{-1}_i(h_i(a))$.
\end{claim}
\begin{proof}[Proof of Claim.]
The non-trivial direction, $h(f_{n(i)}^{-1}(a)) \supseteq g^{-1}_i(h_i(a))$, follows from exactness of $D$ in the inductive step above. In particular, let $x=(x_0,x_1,\ldots)\in K$ with $x_i=h_i(a)$ and let $y_i=a$. Then, since $D$ is exact, there is $d\in D$ with $f'(d)=y_i$ and $g'(d)=x_{i+1}$. Let $y_{i+1}=d'$ for any $d'\in p^{-1}(d)$. Continuing this way we build inductively $y=(y_0,y_1,\ldots)\in \mathbb{M}$ with $h(y)=x$.
\end{proof}

By the above claim, the fact that $h_i$ is open, and since the family of all sets of the form $f_{n(i)}^{-1}(a)$ forms a basis for the topology of $\mathbb{M}$, it follows that $h$ is open.

Next we show that $h$ satisfies Property (1) in the statement of the Theorem.  Let $x\in\mathbb{M}$ and notice that for every $n$, the subgraph $Q_{x,n}=f_n([x])$ of $M_n$ is a clique (of size at most $2$). 
Set $\mathcal{N}=\{ f_n^{-1}(Q_{n}) : n\in\mathbb{N}\}$ and notice that by Lemma \ref{L: basis} it follows that $\mathcal{N}$ is indeed a collection of clopen subsets of $\mathbb{M}$ with $\bigcap\mathcal{N}=[x]$.  Let $N\in\mathcal{N}$ and let $F$ be a closed connected subgraph of $h(N)\subset K$. By reparametrizing the sequences $(M_n)$ and $(K_n)$ above we can assume that  $N=f^{-1}_0(Q)$ for some clique $Q$ in $M_0$ and that  $n=n(i)=i$ in the definition of the sequence $h_i$ above.  Set $Q_n=(f^n_0)^{-1}(Q)$  and let $F_n=g_n(F)$. It is immediate that $F_n$ is a connected subgraph of $K_n$ included in $h_n(Q_n)$, for every $n\in\mathbb{N}$. Let $E_n=h^{-1}_n(F_n)\cap Q_n$. While $f^n_m\res E_n$ could fail to be a connected epimorphism, the following claim is true: 

\begin{claim}
$E_n$ is a connected subgraph of $Q_n$.
\end{claim}
\begin{proof}
We prove this inductively. To run the induction we will actually need the stronger statement that $h_n\res E_n\colon E_n\to F_n$ is in $\mathcal{C}$. Let $E_0=h_0^{-1}(F_0)\cap Q_0$. Since $Q_0$ is a clique, $h_0\res E_0$ is a connected epimorphism from $E_0$ onto $F_0$.

Assume now that $h_n\res E_n\colon E_n\to F_n$ is in $\mathcal{C}$. Since the structural exactness of $D,f',g'$ at the stage $n$ of the construction above is stable under precomposing with $p\colon M_{n+1}\to D$, we have that $h_{n+1}\res (f^{n+1}_n)^{-1}(E_n)$ is a connected epimorphism from $(f^{n+1}_n)^{-1}(E_n)$ to $(g^{n+1}_n)^{-1}(F_n)$. Notice now that $E_{n+1}$, that was defined as $h_{n+1}^{-1}(F_{n+1})\cap Q_{n+1}$, equals $h_{n+1}^{-1}(F_{n+1})\cap (f^{n+1}_n)^{-1}(E_n)$. Since $E_{n+1}$ is the preimage of the connected set $F_{n+1}$ under the connected epimorphism $h_{n+1}\res (f^{n+1}_n)^{-1}(E_n)$, the map $h_{n+1}\res E_{n+1}\colon E_{n+1}\to F_{n+1}$ is connected as well.
\end{proof}
Since $f^n_m(E_n)=E_m$, the above claim implies that the inverse limit 
\[
E=\varprojlim (E_n,f^{n}_m\res E_n)
\] 
is a closed and connected subgraph of $\mathbb{M}$ (although not in general locally-connected), with 
$h^{-1}(F)\cap N= E$. Hence indeed $h$ satisfies the  property (1)  above.

We finish by describing how the above construction can be modified so that $h$ additionally satisfies property $(2)$ of the statement. Recall from Section \ref{S: Menger curve} that any topological graph is isomorphic to $\mathbb{M}$ if it can be expressed as an inverse limit of a generic sequence $(L_n,t^n_m)$. Recall also that a sequence $(L_n,t^n_m)$ is generic if is ``saturated'' with respect to $(A_n)$ and $(e_n)$; see the construction in Section \ref{S: Menger curve}.

Let $\mathcal{Q}$ be the collection of all closed subsgraphs of $K$ which are cliques.
Fix $Q\in\mathcal{Q}$ and for each $i$ set $Q_i=g_i(Q)$ and $L^{Q}_i=h_i^{-1}(Q_i)$. Notice that $Q_i$ is a clique in $K_i$ and as a consequence $g^{i+1}_i\res Q_{i+1}$ is a connected epimorphism from $Q_{i+1}$ to $Q_i$.  
Hence, by assuming during the construction of $h_{i+1}$ in the above that the amalgam $f'\colon D\to M_{n(i)}$,   $g'\colon D \to K_{i+1}$  is two--sided structurally exact, we have that $f'\res (f')^{-1}(Q_{i+1})$ is in $\mathcal{C}$, and therefore, $f^{n(i+1)}_{n(i)}\res L^Q_{i+1} \colon L^Q_{i+1}\to L^Q_i$ is in $\mathcal{C}$.  Therefore, for every $Q\in\mathcal{Q}$ we already have that $h^{-1}(Q)=\varprojlim(L^{Q}_n,f^{n}_m\res L^{Q}_n)\in\mathcal{C}^{\omega}$.  

In order to arrange for $h$ to have property (2) we need to make sure that for every $Q\in\mathcal{Q}$ the sequence $(L^{Q}_n,f^{n}_m\res L^{Q}_n)$ is generic. This is done by modifying slightly the definition of  $h_i$ above. In particular, let $(A_n)$ and $(e_n\colon C_n\to B_n)$ be as in the construction described in Section \ref{S: Menger curve} and assume that for every $Q\in\mathcal{Q}$ the finite sequence $(L^{Q}_n,f^{n}_m\res L^{Q}_n; n,m\leq i)$ has been saturated with respect to $(A_n; n \leq i)$ and $(e_n; n \leq i)$.
 In the process of defining $h_{i+1}$, after we construct $D,f',g'$ as the two--sided structurally exact amalgam of $h_i$ and $g^{i+1}_i$, we further refine it via a map $r\colon D'\to D$ in $\mathcal{C}$ which makes sure that if
$r'=f'\circ r$ is the map from  $D'$ to  $M_{n(i)}$ then for every $Q\in \mathcal{Q}$ we have that:
\begin{enumerate}
\item[(i)] there exists a map in $\mathcal{C}$ from  $(r')^{-1}(Q_{i+1})$ to $A_{i+1}$;
\item[(ii)] if $s\in\mathcal{C}$ is any map from $(f')^{-1}(Q_{i+1})$ to $B_{i+1}$ then there exists $d\in\mathcal{C}$ from $(r')^{-1}(Q_{i+1})$ to $C_{i+1}$  so that          $ s \circ \big(r\res (r')^{-1}(Q_{i+1})\big) = e_{i+1} \circ d$.
\end{enumerate}
This is easily done since the ``local problems''  (i) and (ii) can be turned into  ``global problems'' given  that $\mathcal{C}$ has the local refinement property, and then get solved using finitely many application of the amalgamation property of $\mathcal{C}$.

Going back to the construction of $h_{i+1}$ above, we can now use the extension property of $\mathbb{M}$ to find $n(i+1)$ and a map $p\colon M_{n(i+1)}\to D'$ such that $f' \circ r \circ p=f^{n(i+1)}_{n(i)}$ and set $h_{i+1}=p \circ r \circ g'$.
\end{proof}

As a corollary we get the following variant of Anderson--Wilson's projective universality theorem \cite{An2,Wi}. Notice that the corresponding map in \cite{An2,Wi} is shown to be \emph{monotone}, that is, preimages of points are connected. Since we are working with compact spaces, a map is monotone if and and only if 
it is connected \cite[p.131]{Ku}.  
Moreover, as pointed out by Gianluca Basso, the map we construct is not open. Instead we get that  
it is \emph{weakly locally-connected}: a continuous $\phi\colon Y\to X$ between topological spaces is called {\bf weakly locally-connected} if $Y$ admits a collection $\mathcal{N}$ of neighborhoods so that $\{\mathrm{int}(N)\colon N\in \mathcal{N}\}$ generates the topology of $Y$ and for every $N\in\mathcal{N}$, and for every closed subset $Z$ of $\phi(\mathrm{int}(N))$ we have that $\phi^{-1}(Z)\cap N$ is connected. This property seems rather technical but is very useful for constructing nice sections for the map $\phi$; see \cite{Mi}.

\begin{corollary}[see also Anderson \cite{An2}, Wilson \cite{Wi}] \label{C:MengerProjUniversality}
If  $X$ is a Peano continuum, then there exists a continuous surjective map $|h|\colon |\mathbb{M}|\to X$ which is connected, weakly locally-connected, and  $|h|^{-1}(x)$ is homeomorphic to $|\mathbb{M}|$, for every  $x\in X$.
\end{corollary}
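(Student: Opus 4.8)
The plan is to transfer Theorem \ref{T:preMengerProjUniversality} to the Menger curve via the topological realization functor, exactly as in the proof of Proposition \ref{P: universality} but now keeping track of the stronger properties (1) and (2). First I would invoke Theorem \ref{T: Peano <--> prespaces} to write $X \simeq |K|$ for a prespace $K = \varprojlim(K_n, g^n_m) \in \mathcal{C}^{\omega}$, and then apply Theorem \ref{T:preMengerProjUniversality} to obtain a connected epimorphism $h\colon \mathbb{M} \to K$ which is open and satisfies properties (1) and (2). Since $h$ is a graph homomorphism between prespaces it maps cliques to cliques, hence $h$ descends to a continuous surjection $|h|\colon |\mathbb{M}| \to |K| \simeq X$ between the topological realizations, with $\pi_K \circ h = |h| \circ \pi_{\mathbb{M}}$.

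Next I would check the three asserted properties of $|h|$. For \emph{connectedness}: if $Z \subseteq X$ is closed and connected, then $\pi_K^{-1}(Z)$ is a closed subgraph of $K$; using Lemma \ref{L: basis} (its analogue for $K$) one sees $\pi_K^{-1}(Z)$ is connected as a subgraph whenever $Z$ is connected as a space, so $h^{-1}(\pi_K^{-1}(Z))$ is connected as a subgraph of $\mathbb{M}$ because $h$ is a connected epimorphism, and its $\pi_{\mathbb{M}}$-image, which is $|h|^{-1}(Z)$, is then connected. (Equivalently, compose the connected maps $\pi_{\mathbb{M}}$, $h$ and invoke that $\pi_K$ is connected; preimages of points in $X$ under $\pi_K$ are cliques, hence $|h|^{-1}(x) = \pi_{\mathbb{M}}(h^{-1}(Q))$ for the clique $Q = \pi_K^{-1}(x)$.) For the \emph{fiber} statement: property (2) of Theorem \ref{T:preMengerProjUniversality} says $h^{-1}(Q) \subseteq \mathbb{M}$ is isomorphic to $\mathbb{M}$ for every closed subgraph $Q$ of $K$ that is a clique; since $\pi_K^{-1}(x)$ is precisely such a clique, $|h|^{-1}(x) = \pi_{\mathbb{M}}(h^{-1}(Q))$ is the topological realization of a prespace isomorphic to $\mathbb{M}$, hence homeomorphic to $|\mathbb{M}|$ by Theorem \ref{T: Menger is Menger} (or directly because isomorphic prespaces have homeomorphic realizations).

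For \emph{weak local-connectedness}, I would unwind property (1): fix $[x] \in |\mathbb{M}|$ and lift to $x \in \mathbb{M}$, obtaining from property (1) a family $\mathcal{N}$ of clopen subsets of $\mathbb{M}$ with $\bigcap \mathcal{N} = [x]$ and the fiber-connectedness condition. Passing through $\pi_{\mathbb{M}}$, the sets $\pi_{\mathbb{M}}(N)$ for $N \in \mathcal{N}$ form a neighborhood basis of $[x]$ consisting of closed sets (Lemma \ref{L: basis}, noting each $N$ is of the required form $f_n^{-1}(Q)$ with $Q$ a clique), so $\{\mathrm{int}(\pi_{\mathbb{M}}(N))\colon N \in \mathcal{N},\, [x]\in|\mathbb{M}|\}$ generates the topology. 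Given such an $N$ and a closed $W \subseteq |h|(\mathrm{int}(\pi_{\mathbb{M}}(N)))$, I would set $F = \pi_K^{-1}(W) \cap h(N)$, a closed subgraph of $h(N) \subseteq K$ which is connected as a subgraph when $W$ is connected (and in general we only need connectedness of preimages of \emph{connected} closed subsets for the map to be connected — but here we want the full weak-local-connectedness, so I'd argue $F$ connected whenever $W$ is), apply property (1) to get $h^{-1}(F) \cap N$ connected as a subgraph of $\mathbb{M}$, and conclude that $|h|^{-1}(W) \cap \pi_{\mathbb{M}}(N) = \pi_{\mathbb{M}}(h^{-1}(F) \cap N)$ is connected. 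The main obstacle I anticipate is precisely this last translation: one must be careful that connectedness of subgraphs passes correctly to connectedness of their $\pi$-images and back, that $\pi_K^{-1}(W) \cap h(N)$ really is a closed \emph{connected} subgraph of the topological graph $h(N)$ whenever $W$ is connected as a space (using that $h(N) \in \mathcal{C}^{\omega}$ and the characterization of connected closed subsets as decreasing intersections of connected clopen sets from Proposition \ref{P:char}), and that $\mathrm{int}$ versus closure bookkeeping matches the definition of weakly locally-connected — all of which is routine given the machinery of Sections \ref{S: Topological graphs and Peano continua} and \ref{S: Menger curve} but requires attention to the interplay between the combinatorial and topological notions of connectedness.
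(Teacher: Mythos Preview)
Your proposal is correct and follows essentially the same route as the paper: represent $X$ as $|K|$ via Theorem~\ref{T: Peano <--> prespaces}, apply Theorem~\ref{T:preMengerProjUniversality} to get $h\colon\mathbb{M}\to K$, pass to the quotient map $|h|$, and read off the three conclusions from properties (1) and (2) of $h$. The paper's proof is terse (it simply asserts that the properties ``follow from properties (1) and (2)''), whereas you have spelled out the translation between combinatorial and topological connectedness and correctly flagged the bookkeeping around $\mathrm{int}(N)$ versus $N$ as the only place requiring care.
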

\begin{proof}
By Theorem \ref{T: Peano <--> prespaces}, the space $X$ is homeomorphic to $|K|$ for some prespace $K=\varprojlim(K_n,g^{n}_m)$ in $\mathcal{C}^{\omega}$. Let $h\colon \mathbb{M}\to K$ be the map provided by Theorem \ref{T:preMengerProjUniversality}. Since $h$ is an $R$-homomorphism, the map $h$ induces a map $|h|\colon |\mathbb{M}|\to |K|$ between the quotients. It is easy to check that $|h|$ continuous and surjective and connected. The rest follow from properties (1) and (2) of Theorem \ref{T: Peano <--> prespaces}.
\end{proof}

\section{The approximate projective homogeneity property}\label{S:ApproximateProjectiveHomogeneity}

The Menger prespace $\mathbb{M}$, being the projective \Fraisse{} limit of $\mathcal{C}$, 
automatically enjoys the \emph{projective homogeneity property}: for every $f,g\colon\mathbb{M}\to A$, with $A\in\mathcal{C}$ and $f,g\in\mathcal{C}^{\omega}$, 
there is $\phi\in\mathrm{Aut}(\mathbb{M})$ with $f\circ\phi=g$. From this property we can naturally derive the following  {\bf approximate projective homogeneity property} for the Menger curve $|\mathbb{M}|$.

\begin{theorem}\label{T:ApproximateHomogeneityProperty}
If  $\gamma_0,\gamma_1\colon |\mathbb{M}|\to X$ are continuous and connected maps from the Menger curve onto some Peano continuum $X$, then for every open cover $\mathcal{V}$ of $X$ there is 
 $h\in \mathrm{Homeo}(|\mathbb{M}|)$ so that  $(\gamma_0\circ h)$ and $\gamma_1$ are $\mathcal{V}$-close, that is, 
 \[\forall y\in |\mathbb{M}| \; \exists V\in\mathcal{V} \; (\gamma_0\circ h)(y),\gamma_1(y)\in V.\]
\end{theorem}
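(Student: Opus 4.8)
The plan is to reduce the approximate homogeneity statement for $|\mathbb{M}|$ to the exact projective homogeneity property of $\mathbb{M}$ by passing through a single finite graph $A\in\mathcal{C}$ that simultaneously approximates both maps. First I would fix the open cover $\mathcal{V}$ of $X$. Since $X$ is a Peano continuum, by Theorem \ref{T: Peano <--> prespaces} it is homeomorphic to $|K|$ for some prespace $K=\varprojlim(K_n,g^n_m)\in\mathcal{C}^{\omega}$; composing with the quotient map $\pi_K\colon K\to |K|$, the maps $\gamma_0,\gamma_1$ correspond to continuous connected maps $|\mathbb{M}|\to|K|$. The key point is that a continuous connected map $|\mathbb{M}|\to|K|$ lifts, via the quotient maps $\pi\colon\mathbb{M}\to|\mathbb{M}|$ and the structure of $\mathbb{M}$ as a projective \Fraisse{} limit, to a connected epimorphism $\mathbb{M}\to K$ in $\mathcal{C}^{\omega}$: indeed the composite $|\mathbb{M}|\to|K|$ refines, for $n$ large, the cover $\{\pi_K(g_n^{-1}(b))\colon b\in K_n\}$, and one uses property (1) of Theorem \ref{T: characterization} together with iterated use of the projective extension property to build connected epimorphisms $\hat\gamma_i\colon\mathbb{M}\to K$ with $\pi_K\circ\hat\gamma_i$ uniformly close (with respect to the open cover pulled back from $\mathcal{V}$) to $\gamma_i\circ\pi$.

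Next I would collapse to a finite graph. Choosing $n$ large enough that $\{\pi_K(g_n^{-1}(b))\colon b\in K_n\}$ refines $\mathcal{V}$ (in the sense that the images of $\mathcal{V}$ under the homeomorphism $X\simeq|K|$ are refined), set $A=K_n$ and $f_i=g_n\circ\hat\gamma_i\colon\mathbb{M}\to A$, which are morphisms in $\mathcal{C}^{\omega}$. Now apply the \textbf{projective homogeneity property} of $\mathbb{M}$ recalled at the start of this section: there is $\phi\in\mathrm{Aut}(\mathbb{M})$ with $f_0\circ\phi=f_1$. Since $\phi$ is an automorphism of the prespace $\mathbb{M}$ it descends to a homeomorphism $h=|\phi|\in\mathrm{Homeo}(|\mathbb{M}|)$. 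The equation $g_n\circ\hat\gamma_0\circ\phi=g_n\circ\hat\gamma_1$ says that $\hat\gamma_0\circ\phi$ and $\hat\gamma_1$ land in the same fiber $g_n^{-1}(b)$ at every point of $\mathbb{M}$; pushing forward through $\pi_K$ and using that $\pi_K(g_n^{-1}(b))$ is a single element of the cover refining $\mathcal{V}$, we get that $\gamma_0\circ h$ and $\gamma_1$ are $\mathcal{V}$-close, modulo the (uniformly small, absorbable) error from replacing $\gamma_i$ by $\pi_K\circ\hat\gamma_i$. A standard Lebesgue-number argument, choosing the refinements at one stage finer than needed, takes care of combining the two error terms.

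The main obstacle I expect is the lifting step: producing the connected epimorphism $\hat\gamma_i\colon\mathbb{M}\to K$ in $\mathcal{C}^{\omega}$ out of the merely continuous connected map $\gamma_i\colon|\mathbb{M}|\to|K|$, with good control on how closely $\pi_K\circ\hat\gamma_i$ tracks $\gamma_i\circ\pi$. One has to check that the maps $\mathbb{M}\to K_n$ built by refinement are genuinely connected epimorphisms and not just homomorphisms --- this uses Lemma \ref{L:T1} (preimages of points connected suffices) together with connectedness of $\gamma_i$ --- and that the coherence required to pass to the inverse limit $K$ can be maintained while staying within $\mathcal{C}^{\omega}$; here the projective extension property of Theorem \ref{T: characterization}(2) is exactly what is needed, applied recursively up the tower $(K_n)$. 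The remaining steps --- descending an automorphism of a prespace to a homeomorphism of its realization, and the Lebesgue-number bookkeeping --- are routine given the machinery already established in Sections \ref{S: Topological graphs and Peano continua} and \ref{S: Menger curve}.
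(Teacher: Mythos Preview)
Your overall strategy---pass to a finite graph $A$ and invoke the projective homogeneity of $\mathbb{M}$, then descend the automorphism---is exactly the paper's. But there is a genuine gap at the step you yourself flag as the obstacle. The lift $\hat\gamma_i\colon\mathbb{M}\to K$ cannot be manufactured from property~(1) of Theorem~\ref{T: characterization} together with the projective extension property: those tools produce \emph{some} morphism $\mathbb{M}\to K_0$ and then lift it up the tower, but the morphism so produced has no relation whatsoever to $\gamma_i$. What you actually need, already at the level of a single finite quotient $A$, is a map $u_i\colon\mathbb{M}\to A$ in $\mathcal{C}^\omega$ with $u_i^{-1}(a)\subseteq(\gamma_i\circ\pi_{\mathbb{M}})^{-1}(V_a)$ for suitable connected open $V_a\subseteq X$ whose nerve is $A$. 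The pullback cover $\{(\gamma_i\circ\pi_{\mathbb{M}})^{-1}(V_a)\}$ is indeed an open cover of $\mathbb{M}$ by connected subgraphs with nerve $A$, but there is no automatic way to shrink it to a \emph{partition} of $\mathbb{M}$ by connected clopen sets with the same nerve---and that is exactly what defining such a $u_i$ amounts to. Lemma~\ref{L:T1} and connectedness of $\gamma_i$ only tell you the cover consists of connected sets; they do not give the partition.

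The paper isolates this step as Lemma~\ref{L:ApproximateHomogeneityProperty}: given an open cover $\{U_a:a\in\mathrm{dom}(A)\}$ of $\mathbb{M}$ by connected subgraphs with $U_a\cap U_b\neq\emptyset\iff R^A(a,b)$, there exists $u\colon\mathbb{M}\to A$ in $\mathcal{C}^\omega$ with $u^{-1}(a)\subseteq U_a$. Its proof is not formal: one first uses the projective extension property (via a product with a clique and a subdivision) to plant small connected clopen ``seeds'' $W_a\subseteq U_a$ realizing the edge pattern of $A$, and then grows them by an increasing-union argument, using local connectedness of $\mathbb{M}$, into a connected clopen partition $\{\widetilde W_a\}$ still subordinate to $\{U_a\}$. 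Once this lemma is available, the rest of your plan goes through verbatim and coincides with the paper's proof; the detour through a full lift $\hat\gamma_i\colon\mathbb{M}\to K$ is neither needed nor, as written, justified.
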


In other words, if we endow  the space $\mathrm{Maps}_0(|\mathbb{M}|,X)$, of all continuous and connected maps from $|\mathbb{M}|$ onto the Peano continuum $X$ with the compact open topology, then the orbit of each $\gamma\in \mathrm{Maps}_0(|\mathbb{M}|,X)$ under the natural action of $ \mathrm{Homeo}(|\mathbb{M}|)$ on $\mathrm{Maps}_0(|\mathbb{M}|,X)$ is dense in $\mathrm{Maps}_0(|\mathbb{M}|,X)$. We start with a lemma.

\begin{lemma}\label{L:ApproximateHomogeneityProperty}
Let $A\in\mathcal{C}$ and let $\mathcal{U}=\{U_a\colon a\in \mathrm{dom}(A)\}$ be an open cover of $\mathbb{M}$ consisting of connected subgraphs. If $U_a\cap U_b\neq\emptyset\iff R^{A}(a,b)$, then there is $u\colon \mathbb{M}\to A$ in $\mathcal{C}^{\omega}$ so that $u^{-1}(a)\subseteq U_a$, for all $a\in\mathrm{dom}(A)$.
\end{lemma}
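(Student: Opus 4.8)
The plan is to obtain $u$ from a clopen partition of $\mathbb{M}$ subordinate to $\mathcal{U}$, recognized as the fibre decomposition of a connected epimorphism onto $A$. First, a reduction: it suffices to find a clopen partition $\{V_a : a\in\mathrm{dom}(A)\}$ of $\mathbb{M}$ with $V_a\subseteq U_a$, each $V_a$ nonempty and connected as a subgraph, and an $R^{\mathbb{M}}$-edge between $V_a$ and $V_b$ exactly when $R^A(a,b)$. Given such a partition, the map $u\colon\mathbb{M}\to A$ with $u^{-1}(a)=V_a$ is continuous (the $V_a$ are clopen), a graph homomorphism (by the edge condition and reflexivity of $R^A$), surjective on vertices (each $V_a\neq\emptyset$) and on edges (the edge condition realizes every edge of $A$), with connected point-fibres $u^{-1}(a)=V_a$; hence, by the argument of Lemma~\ref{L:T1} (which goes through verbatim for a finite target $A$), $u$ is a connected epimorphism, and since $\mathbb{M}$ and $A$ are connected, locally-connected topological graphs, Proposition~\ref{P:char} yields $u\in\mathcal{C}^{\omega}$. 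As $u^{-1}(a)=V_a\subseteq U_a$, this is the required conclusion.

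To build the partition, fix an inverse presentation $\mathbb{M}=\varprojlim(M_n,f^n_m)$ with $f^n_m\in\mathcal{C}$ and projections $f_n\colon\mathbb{M}\to M_n$. Using that the fibres $f_n^{-1}(c)$ form a clopen basis, that $\mathbb{M}$ is compact, and that these fibres split further down the tower, I would fix $n$ large enough that: (i) $\{f_n^{-1}(c):c\in M_n\}$ refines $\mathcal{U}$; (ii) every $U_a$ contains a whole fibre $f_n^{-1}(c)$; and (iii) for each edge $\{a,b\}$ of $A$ the open nonempty set $U_a\cap U_b$ contains two $R^{M_n}$-adjacent fibres. Set $B=M_n$, $f=f_n$, and $T_a=\{c\in B:f^{-1}(c)\subseteq U_a\}$ for each $a$. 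By (i) the $T_a$ cover $B$, and if $c\in T_a\cap T_b$ then $f^{-1}(c)\subseteq U_a\cap U_b\neq\emptyset$, so $R^A(a,b)$.

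Now choose, for each $c\in B$, a value $a(c)$ with $c\in T_{a(c)}$, and put $V'_a=\{c:a(c)=a\}$ and $V_a=f^{-1}(V'_a)$. The choice is arranged so that: each $V'_a$ absorbs a private fibre from (ii), giving $V_a\neq\emptyset$; for each edge $\{a,b\}$ of $A$ the two adjacent fibres from (iii) land one in $V'_a$ and one in $V'_b$, producing an $R^{\mathbb{M}}$-edge between $V_a$ and $V_b$ and hence surjectivity of $u$ on edges; and, processing the vertices of $A$ along a spanning tree and using that each trace $f(U_a)$ is a connected subgraph of $B$ (graph homomorphisms preserve connectedness), each $V'_a$ is connected, whence $V_a=f^{-1}(V'_a)$ is connected because $f$ is a connected epimorphism.

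The step I expect to be the main obstacle is checking that $u$ is a homomorphism — equivalently, that there is no $R^{\mathbb{M}}$-edge between $V_a$ and $V_b$ when $\neg R^A(a,b)$. Such an edge descends to an $R^B$-edge $c\mathrel{R^B}c'$ with $f^{-1}(c)\subseteq U_a$ and $f^{-1}(c')\subseteq U_b$, which, as $f$ is surjective on edges, lifts to an $R^{\mathbb{M}}$-edge $x\mathrel{R^{\mathbb{M}}}y$ with $x\in U_a$ and $y\in U_b$. Since cliques of $\mathbb{M}$ have size at most $2$, $\{x,y\}$ is an entire $R^{\mathbb{M}}$-class meeting both $U_a$ and $U_b$; by the hypothesis on $\mathcal{U}$ this forces $x,y\in U_a\cap U_b\neq\emptyset$, hence $R^A(a,b)$, as wanted. (What is used is that each member of $\mathcal{U}$ contains, with each of its points, the whole $R^{\mathbb{M}}$-class of that point; this is automatic when $\mathcal{U}$ is the $\pi$-preimage of an open cover of $|\mathbb{M}|$, the situation in which the lemma is applied in Theorem~\ref{T:ApproximateHomogeneityProperty}.) Granting this, $\{V_a\}$ is the partition sought, and the proof is complete.
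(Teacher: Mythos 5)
Your overall plan---reduce to producing a clopen partition $\{V_a\}$ of $\mathbb{M}$ with $V_a\subseteq U_a$, each part nonempty and connected, whose nerve is exactly $A$, and then recognize $u$ with $u^{-1}(a)=V_a$ as a morphism of $\mathcal{C}^{\omega}$ via Lemma \ref{L:T1} and Proposition \ref{P:char}---is the same as the paper's, and that reduction is sound. (Your remark that the final homomorphism check uses $R$-saturation of the $U_a$ is a fair observation; the paper elides the same point, and in the only application the $U_a$ are $\pi$-preimages, hence saturated.) The genuine gap is the step you dispose of in one clause: ``processing the vertices of $A$ along a spanning tree \dots\ each $V'_a$ is connected.'' That is the entire content of the lemma, and it does not follow from your conditions (i)--(iii). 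The set $T_a=\{c\in M_n\colon f_n^{-1}(c)\subseteq U_a\}$ is in general a \emph{disconnected} subgraph of $M_n$ for every $n$: the trace $f_n(U_a)$ is connected, but $T_a$ consists only of the vertices whose entire fibre lies in $U_a$, and a connected open but non-clopen $U_a$ (for instance $W\setminus K$ with $W$ clopen connected and $K$ a locally non-separating closed set) can have the fibre meeting $K$ be a cut vertex of $f_n(W)$ at every level $n$. A greedy or spanning-tree assignment $c\mapsto a(c)$ can then get stuck: an unassigned $c$ may have all of its already-assigned neighbours in parts $V'_b$ with $c\notin T_b$, while the component of $T_{a}$ containing $c$ (for any admissible $a$) is separated from the rest of $V'_a$. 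Nothing in (i)--(iii) excludes this, and no single choice of level $n$ repairs it.

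The paper's proof is organized precisely around this difficulty, in two stages your sketch omits. First it builds clopen \emph{connected} seeds $W_a\subseteq U_a$ realizing exactly the adjacencies of $A$ and no others; even this requires work (a subdivision of $B\times Q_A$ and a ``strongly pairwise disjoint'' system of paths $P(a,b)$ with $f^{-1}(P(a,b))\subseteq U_a\cup U_b$), because the private fibre chosen for $a$ and the fibres witnessing each edge at $a$ must be joined to one another \emph{inside} $U_a$. Second, it grows the seeds to a clopen partition by a multi-scale absorption: it enumerates a basis $(O_k)$ of clopen connected sets, each contained in every $U_a$ it meets, repeatedly splits $O_{k}\setminus\bigcup_a W^n_a$ into finitely many clopen connected pieces, attaches each piece to an adjacent $W^n_{a}$, and finally uses compactness of $\mathbb{M}$ together with connectedness of the graph $\mathbb{M}$ to show the process exhausts $\mathbb{M}$ after finitely many steps. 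It is the freedom to absorb arbitrarily small connected pieces, rather than pieces of one fixed level $M_n$, that makes the connectivity bookkeeping succeed. As written, your proposal asserts the conclusion of this argument rather than supplying it.
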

\begin{proof}
First we pick for each each $a\in\mathrm{dom}(A)$  a clopen connected subgraph $W_a$ of $\mathbb{M}$, with $\mathrm{dom}(W_a)\subseteq \mathrm{dom}(V_a)$, so that 
\begin{equation}\label{Equatio2}
W_a\cap W_b\neq \emptyset \text{ if and only if } R^A(a,b).
\end{equation}
This can always be arranged  as follows. Let $f_0\colon \mathbb{M}\to B$ be any map in $\mathcal{C}^{\omega}$, with $B\in\mathcal{C}$, so that $\{f_0^{-1}(b): b\in\mathrm{dom}(B)\}$ refines $\mathcal{U}$.  Let $B\times Q_A\in\mathcal{C}$ be the product---see proof of Theorem \ref{T:1}---of $B$ with the clique $Q_A$ on domain $\mathrm{dom}(A)$, and let $p\colon B\times Q_A\to B$ the natural projection. Let  $C\in\mathcal{C}$ be the graph attained by subdividing every non-trivial edge of the graph $B\times Q_A$, and let $r\colon C\to B\times Q_A$ be any map which maps every vertex that came from a subdivision to either of its two neighbors; and every vertex already in $\mathrm{dom}(B\times Q_A)$  to itself. Clearly the map  $s\colon C\to B$ with   $s = p \circ r$ is in $\mathcal{C}$. By the projective extension property of $\mathbb{M}$---see; Theorem \ref{T: characterization}---we can replace $f_0$ with a map $f\colon \mathbb{M}\to C$  from $\mathcal{C}^{\omega}$. Notice that for the map $f$  we can choose: for every $a\in\mathrm{dom}(A)$, a vertex $v_a\in \mathrm{dom}(C)$   with $f^{-1}(v_a)\subseteq U_a$, so that $v_a\neq v_{b}$ if $a\neq b$; and for every $a,b\in \mathrm{dom}(A)$ with $R^A(a,b)$, a path $P:=P(a,b)$ in $C$ from $v_a$ to $v_b$, with $f^{-1}(P)\subseteq U_a\bigcup U_{b}$, so that the collections of all these paths  forms a ``strongly pairwise disjoint'' system,  i.e.,  if the paths $P,P'$ are distinct and $v\in P$, $v'\in P'$, with $R^C(v,v')$, then either $v$ is an endpoint of $P$ or $v'$ is an endpoint of $P'$. Using this ``strongly pairwise disjoint" system of paths  it is easy to define the collection $\{W_a: a\in\mathrm{dom}(A)\}$.
  
 Next we find  clopen, connected subgraphs $\widetilde{W}_a$ of $\mathbb{M}$ with
\begin{equation}\label{Equatio3}
\mathrm{dom}(W_a)\subseteq \mathrm{dom}(\widetilde{W}_a)\subseteq U_a, \; \widetilde{W}_a\cap\widetilde{W}_{a}'=\emptyset\text{ if }  a\neq a', \; \mathrm{dom}(\mathbb{M})=\bigcup_{a}\mathrm{dom}(\widetilde{W}_a),
\end{equation}
and   define the map $u\colon \mathbb{M}\to A$ with  $u^{-1}(a)=\widetilde{W}_a$. Properties (\ref{Equatio2}), (\ref{Equatio3}), and the fact that $U_a\cap U_b\neq\emptyset\iff R^{A}(a,b)$ will then  imply that this is indeed the desired map.  

We define $\widetilde{W}_a$ as the union $\bigcup_{n}W^n_a$ of an increasing sequence of clopen subgraphs of  $U_a$. Let $(O_k)$ be an enumeration of a basis for the topology of $\mathrm{dom}(\mathbb{M})$ consisting of clopen connected graphs with the property that $O_k\cap U_a\neq\emptyset$ implies  $O_k\subseteq U_a$ for all $a\in\mathrm{dom}(A)$ and $k\in\mathbb{N}$.  We set $W^0_a=W_a$, for every $a\in\mathrm{dom}(A)$.  Assume that $W^n_a$ has been defined for all $a\in\mathrm{dom}(A)$, and let $k(n+1)$ be the smallest natural number so that $O_{k(n+1)}\cup (\bigcup_a W^n_a)$ is a connected graph strictly expanding $\bigcup_a W^n_a$, if such $k$ number exists; otherwise, let $k(n+1)=\infty$. If $k(n+1)\in\mathbb{N}$ then  $O_{k(n+1)}$ is compact and locally-connected. Hence, $O_{k(n+1)}\setminus (\bigcup_a W^n_a)$ is the union of finitely many clopen connected subgraphs $R_1,\ldots,R_m$ of $\mathbb{M}$. It is easy to see that for each $i\leq m$ there is some $a(i)$ so that $W^n_{a(i)}\cup R_i$ is connected. Let $W^{n+1}_a$ be the union of $W^n_a$ together with all $R_i$ with $a(i)=a$, if $k(n+1)\in\mathbb{N}$; and let $W^{n+1}_a=W^n_a$, if $k(n+1)=\infty$. This finishes the definition of $\{W^n_a: a\in \mathrm{dom}(A)\}$ for each $n\in\mathbb{N}$ and an easy induction shows that  $\{W^n_a: a\in \mathrm{dom}(A)\}$ is a disjoint collection of clopen connected graphs with $W^n_a\subseteq U_a$. We are left to show that 
\[\mathbb{M}=\bigcup_n \bigcup_{a} W^n_a,\]
since then, by compactness of $\mathrm{dom}(\mathbb{M})$, the  union along $\mathbb{N}$ 
will stabilize at some finite $n$, and  $\widetilde{W}_a=\bigcup_nW^n_a$ will therefore be clopen. Assume towards contradiction that some $x\in\mathrm{dom}(\mathbb{M})$ is not in the domain of the above union and let $k(x)$ be such that $x\in O_{k(x)}$. It follows that  
\begin{equation}\label{Equatio4}
[O_{k(x)}]\cap \bigcup_n \bigcup_{a} W^n_a =\emptyset,
\end{equation}
since otherwise $[V_{k(x)}]\cap \bigcup_{n\leq l} \bigcup_{a} W^n_a\not=\emptyset$ for some $l$, implying that  for each $n>l$, $k(n)\not= k(n+1)$ and 
$k(n)<k(x)$, which is contradictory. But then, setting $X=\mathrm{dom}(\mathbb{M})\setminus \bigcup_n \bigcup_{a} \mathrm{dom}(W^n_a)$, we have by (\ref{Equatio4}) that:
\[
\mathbb{M}=  [\bigcup_{x\in  X}O_{k(x)}]   \bigcup  \big( \bigcup_n \bigcup_{a} W^n_a \big), \text{ with } [\bigcup_{x\in  X}O_{k(x)}]   \bigcap  \big( \bigcup_n \bigcup_{a} W^n_a \big) =\emptyset, 
\]
Contradicting that $\mathbb{M}$ is a connected graph.
\end{proof}

We can now finish the proof of Theorem \ref{T:ApproximateHomogeneityProperty}.
\begin{proof}[Proof  of Theorem \ref{T:ApproximateHomogeneityProperty}]

By Theorem \ref{T: Peano <--> prespaces}, $X$ is homeomorphic to $|K|=\pi_K(K)$ for some prespace $K\in\mathcal{C}^{\omega}$. Let $g\colon K\to A$ be a map in $\mathcal{C}^{\omega}$ with $A\in\mathcal{C}$ so that $\{\pi_K(g^{-1}(a))\mid a\in \mathrm{dom}(A)\}$ refines $\mathcal{V}$. Since each $\pi_K(g^{-1}(a))$ is a compact and connected subset of a locally-connected space we can find connected open subsets $V_a\supseteq \pi(g^{-1}(a))$ of $|K|$, with $V_a\cap V_b\neq \emptyset$ if and only if $R^A(a,b)$,  so that $\{V_a\mid a\in\mathrm{dom}(A)\}$ refines $\mathcal{V}$. Let $U^0_a:=   (\gamma_0\circ \pi_{\mathbb{M}})^{-1}(V_a), U^1_a:=   (\gamma_1\circ \pi_{\mathbb{M}})^{-1}(V_a)$, and set $\mathcal{U}^0:=\{U^0_a\mid a\in\mathrm{dom}(A)\}, \mathcal{U}^1:=\{U^1_a\mid a\in\mathrm{dom}(A)\}$. Then $\mathcal{U}^0$ and $\mathcal{U}^1$ are open covers of $\mathbb{M}$ consisting of  connected graphs of $\mathbb{M}$  so that: 
\begin{equation}\label{Equatio1}
U^0_a\cap U^0_b\neq  \emptyset \iff R^A(a,b)  \iff   U^1_a\cap U^1_b\neq \emptyset
\end{equation}
To see  that $U^0_a$ and $U^1_a$ are connected graphs, notice that, since $X$ is a Peano continuum,  $V_a$ is the increasing union of compact connected sets, and since $\gamma$ is a connected map,  $\gamma^{-1}(V_a)$  is also the increasing union of compact connected sets.

Let  $u_0$ and  $u_1$ be the maps  given by applying Lemma \ref{L:ApproximateHomogeneityProperty} to the covers $\mathcal{U}_0$ and $\mathcal{U}_1$, respectively. By the projective homogeneity property of $\mathbb{M}$ there is $\varphi\in\mathrm{Aut}(\mathbb{M})$ so that  $u_0\circ\varphi=u_1$. Let $h\colon |\mathbb{M}|\to |\mathbb{M}|$ with $h([x])=(\pi_{\mathbb{M}}\circ \varphi)(x)$. Since $\varphi\in\mathrm{Aut}(\mathbb{M})$, it follows that $h$  is a well-defined homeomorphism of $|\mathbb{M}|$. To check that this is the desired homeomorphism, let $y\in|\mathbb{M}|$  and fix $x\in \mathrm{dom}(\mathbb{M})$ with $\pi_{\mathbb{M}}(x)=y$. Set $a:=u_1(x)$ and notice that since $x\in u_1^{-1}(a)\subseteq (\gamma_1\circ \pi_{\mathbb{M}})^{-1}(V_a)$, we have that $\gamma_1(y)=\gamma_1 \circ \pi_{\mathbb{M}}(x)\in V_a$. On the other hand, $h(y)=\pi_{\mathbb{M}}(\varphi(x))$, and since $u_0^{-1}(a)\subseteq (\gamma_0\circ \pi_{\mathbb{M}})^{-1}(V_a)$, we have that $\varphi(x)\in u^{-1}_0(a)\subseteq (\gamma_0\circ \pi)^{-1}(V_a)$. Hence, $h(y)=\gamma_0\circ \pi_{\mathbb{M}}\circ \varphi(x)\in V_a$.
 Since $\{V_a:\mathrm{a}\in\mathrm{dom}(A)\}$ refines  $\mathcal{V}$, we are done.
\end{proof}

\section{The $n$-dimensional case}\label{S: n-dim}

In this section, we consider simplicial complexes that are more general than graphs. A {\bf simplicial complex}  $C$ is a family of finite sets  that is closed downwards, that is, if $\sigma\in C$ and $\tau\subset\sigma$  then $\tau\in C$. The elements $\sigma$ of $C$ are called {\bf faces} of the simplicial complex. 
We set $\mathrm{dom}(C)=\cup C$ to be the {\bf domain} of the simplicial complex. A {\bf subcomplex} $D$ of $C$ is a simplicial complex with $D\subseteq C$.  A {\bf simplicial map} $f\colon B\to A$ is a map from $\mathrm{dom}(B)$ to $\mathrm{dom}(A)$ with $f\sigma\in A$ whenever $\sigma\in B$, where $f\sigma$ stands for the set $\{f(v) \colon v\in\sigma\}$.

Let $C$ be simplicial complex and let $\sigma\in C$. The {\bf dimension} $\mathrm{dim}(\sigma)$ of $\sigma$ is $n\geq(-1)$ if the cardinality of $\sigma$ is $n+1$.  We say that $C$ is {\bf $n$-dimensional} if $\mathrm{dim}(\sigma)\leq n$ for every $\sigma\in C$. We briefly recall some definitions from algebraic topology. For more details see Definition \ref{D:Acyclic} and the discussion after the proof of Theorem \ref{T:n-dim}.
 We say that $C$ is {\bf $n$-connected} if all homotopy groups $\pi_k(C)$ of $C$, with $k\leq n$, vanish. We say that it is {\bf $n$-acyclic} if all (reduced) homology groups $\widetilde{H}_k(C)$ of $C$, with $k\leq n$, vanish.
 Similarly, a simplicial map $f\colon B\to A$ is called {\bf $n$-connected} if the preimage of every $n$-connected subcomplex of $A$ under $f$ is $n$-connected, and it is called {\bf $n$-acyclic} if the preimage of every $n$-acyclic subcomplex of $A$ under $f$ is $n$-acyclic. Since a simplicial complex $A$ is $(-1)$-connected if and only $\mathrm{dom}(A)\neq \emptyset$, a simplicial map is $(-1)$-connected if it is a surjection on the domains of the simplicial complexes.

\begin{definition}
For every $n\in\{0,1,\ldots\}\cup\{\infty\}$, let $\mathcal{C}_n$ be the class of all $(n-1)$-connected simplicial maps between finite, $n$-dimensional, $(n-1)$-connected simplicial complexes.
Similarly let $\widetilde{\mathcal{C}}_n$ be the class of all $(n-1)$-acyclic simplicial maps between finite, $n$-dimensional, $(n-1)$-acyclic simplicial complexes.
\end{definition}

\begin{theorem}\label{T:n-dim}
For all $n$ as above, both $\mathcal{C}_n$ and $\widetilde{\mathcal{C}}_n$ are projective \Fraisse{}.
\end{theorem}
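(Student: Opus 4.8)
The plan is to verify the four \Fraisse{} axioms of the introduction for each of $\mathcal{C}_n$ and $\widetilde{\mathcal{C}}_n$, treating both classes uniformly wherever possible since the arguments differ only in replacing ``$(n-1)$-connected'' by ``$(n-1)$-acyclic''. Axiom (1), countability up to isomorphism, is immediate: there are only countably many isomorphism types of finite simplicial complexes and only finitely many simplicial maps between any two of them. Axiom (2), closure under composition and inclusion of identities, reduces to checking that the composition of two $(n-1)$-connected (resp. $(n-1)$-acyclic) simplicial maps is again such; this follows formally from the definition, exactly as in Lemma~\ref{L:T1}, once one observes that a preimage of an $(n-1)$-connected subcomplex under a composite is a preimage of a preimage. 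Axiom (3), the joint projection property, follows from the projective amalgamation property (axiom (4)) by taking $A$ to be a one-point complex (which is $(n-1)$-connected and $(n-1)$-acyclic and receives a unique simplicial map from every nonempty complex); so the whole theorem rests on establishing projective amalgamation.

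For projective amalgamation I would mimic the proof of Theorem~\ref{T:1}: given $f\colon B\to A$ and $g\colon C\to A$ in $\mathcal{C}_n$, form the fibered product $D$ on domain $\{(b,c)\in\mathrm{dom}(B)\times\mathrm{dom}(C)\colon f(b)=g(c)\}$, with a set $\sigma$ declared a face of $D$ iff both its projections $p_B\sigma$ and $p_C\sigma$ are faces. One must check that $D$ is $n$-dimensional (clear, since faces project to faces of the $n$-dimensional complexes $B,C$), that $D$ is $(n-1)$-connected (resp. $(n-1)$-acyclic), and that the projections $p_B\colon D\to B$, $p_C\colon D\to C$ are $(n-1)$-connected (resp. $(n-1)$-acyclic) simplicial maps. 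As in Lemma~\ref{L:T1}, it suffices to understand the point-fibers: the fiber $p_B^{-1}(b)$ is simplicially isomorphic to the subcomplex of $C$ carried by $g^{-1}(f(b))$, which is $(n-1)$-connected (resp. $(n-1)$-acyclic) by hypothesis on $g$. The key technical point is therefore a gluing principle: \emph{if a simplicial map has $(n-1)$-connected (resp. $(n-1)$-acyclic) point-fibers and $(n-1)$-connected (resp. $(n-1)$-acyclic) target, then its source is $(n-1)$-connected (resp. $(n-1)$-acyclic), and moreover the map is $(n-1)$-connected (resp. $(n-1)$-acyclic).} For the acyclic case this is a Vietoris--Begle / Leray-type spectral sequence argument (or, combinatorially, an induction on the number of simplices of the base using Mayer--Vietoris), and it simultaneously yields that $D$ has the right connectivity and that $p_B$, $p_C$ are morphisms. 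For the connected case one argues with the analogous statement for homotopy groups—e.g., via the combinatorial version of a quasifibration argument, or by building the relevant maps on skeleta cell by cell using that each attaching sphere of dimension $\leq n-1$ can be filled because both the fibers and the already-constructed part are sufficiently connected.

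The main obstacle I expect is precisely this gluing lemma in the \emph{$n$-connected} (homotopy) setting: unlike homology, homotopy groups do not fit into Mayer--Vietoris sequences, so one cannot simply run a spectral sequence. I would handle it by an explicit skeletal induction—showing that any map $S^k\to D$ with $k\leq n-1$ is null-homotopic by pushing it down to $B$, filling there, and then lifting the filling fiberwise using the $(n-1)$-connectivity of the fibers (a simplicial-approximation argument, with the dimension bound $n$ on all complexes keeping everything finite and controlled). The acyclic case $\widetilde{\mathcal{C}}_n$ is then the easier parallel, and the case $n=\infty$ follows by noting that ``$(\infty-1)$-connected'' means ``weakly contractible'' and all the finiteness/dimension bookkeeping still goes through degree by degree. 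Finally, the $n=0$ case recovers exactly Theorem~\ref{T:1} (graphs are $1$-dimensional, $0$-connected $=$ connected), which serves as a consistency check on the whole scheme.
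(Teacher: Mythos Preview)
Your overall plan---reduce to projective amalgamation and realize the amalgam as a fibered product---matches the paper's. But two of your steps contain genuine gaps.

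First, the fibered product $D$ is \emph{not} $n$-dimensional. Your justification (``faces project to faces of the $n$-dimensional complexes $B,C$'') runs the wrong way: a face of $D$ of size $k+1$ projects to faces of size at most $k+1$, so bounding the dimension of the projections says nothing about $k$. Concretely, if $A$ is a single vertex and $B=C$ is an $n$-simplex, your $D$ is the full simplex on $(n+1)^2$ vertices. The paper takes $D=\mathrm{Sk}^n(B\times_A C)$, the $n$-skeleton of the simplicial pullback; you must truncate as well, and then check that passing to the $n$-skeleton does not spoil $(n-1)$-connectivity of the relevant preimages.

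Second, and more seriously, your ``gluing principle'' is false. The analogue of Lemma~\ref{L:T1} does not survive to $n\geq 2$: take $A$ a single edge on $\{a_1,a_2\}$, $B$ the $4$-cycle on $b_1,b_2,b_3,b_4$, and $f$ sending $b_1,b_2\mapsto a_1$, $b_3,b_4\mapsto a_2$. Both point-fibers are edges (contractible) and $A$ is contractible, yet $B$ is a circle, neither $1$-connected nor $1$-acyclic. What one actually needs is connectivity not of point-fibers $f^{-1}(v)$ but of \emph{simplex}-fibers $f^{-1}(\Delta(\rho))$ for every face $\rho$; this is exactly Lemma~\ref{L:n-dim}, the simplicial form of the Vietoris--Begle/Quillen fiber statement you invoke. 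The paper then verifies this simplex-fiber hypothesis for $p_B$ by identifying $p_B^{-1}(\Delta(\rho))$ with the preimage of (a copy of) $g^{-1}(\Delta(f\rho))$ under an auxiliary map to which Lemma~\ref{L:n-dim} applies again. In your scheme this computation is simply absent: knowing $p_B^{-1}(b)\cong g^{-1}(f(b))$ is not enough.

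(A small indexing slip: it is $\mathcal{C}_1$, not $\mathcal{C}_0$, that recovers the class $\mathcal{C}$ of connected graphs; $\mathcal{C}_0$ is finite nonempty sets with surjections.)
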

For the proof of Theorem \ref{T:n-dim} will need the next lemma. Let $\rho$ be a finite set.  The {\bf simplex $\Delta(\rho)$ on $\rho$} is the simplicial complex $\{\sigma \colon \sigma\subseteq\rho\}$. If $C$ is a simplicial complex and $\rho\in C$ then $\Delta(\rho)$ is a  subcomplex  of $C$. 

\begin{lemma}\label{L:n-dim}
If  $f\colon B\to A$ is a simplicial map between two finite simplicial complexes, then we have that:
\begin{enumerate}
\item $f$ is  $(n-1)$-connected if and only if   $f^{-1}(\Delta(\rho))$ is $(n-1)$-connected for every $\rho\in A$ with $\mathrm{dim}(\rho)\leq n$.
\item $f$ is  $(n-1)$-acyclic if and only if   $f^{-1}(\Delta(\rho))$ is $(n-1)$-acyclic for every $\rho\in A$ with $\mathrm{dim}(\rho)\leq n$.
\end{enumerate}
\end{lemma}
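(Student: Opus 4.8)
The plan is to prove both statements simultaneously, since the arguments are formally identical once one replaces ``$(n-1)$-connected'' by ``$(n-1)$-acyclic'' throughout; I will write the proof for the connected case and remark that the acyclic case follows verbatim. The nontrivial direction is $(\Leftarrow)$: assuming $f^{-1}(\Delta(\rho))$ is $(n-1)$-connected for every face $\rho\in A$ of dimension $\le n$, I must show $f^{-1}(S)$ is $(n-1)$-connected for \emph{every} $(n-1)$-connected subcomplex $S$ of $A$. The forward direction $(\Rightarrow)$ is nearly immediate: $\Delta(\rho)$ is a simplex, hence contractible, hence $(n-1)$-connected (as long as $\rho\ne\emptyset$; when $\mathrm{dim}(\rho)=-1$ one checks separately that $f^{-1}(\emptyset)=\emptyset$ is handled by the surjectivity convention), so $f^{-1}(\Delta(\rho))$ is $(n-1)$-connected by the definition of an $(n-1)$-connected map.

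For $(\Leftarrow)$ the key device is a Mayer--Vietoris / van Kampen induction on the number of top-dimensional faces of $S$ combined with a \emph{nerve-type} or \emph{gluing} argument. Concretely, I would argue as follows. First reduce to the case where $S$ is the full simplicial complex $A$: if $S\subsetneq A$ is an $(n-1)$-connected subcomplex, then $f\res f^{-1}(S)\colon f^{-1}(S)\to S$ is again a simplicial map between finite complexes, and for every face $\rho\in S$ we have $(f\res f^{-1}(S))^{-1}(\Delta(\rho))=f^{-1}(\Delta(\rho))$ since $\Delta(\rho)\subseteq S$; so the hypothesis is inherited, and it suffices to treat $S=A$. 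Now I induct on the number $k$ of faces $\rho$ of $A$ with $\dim(\rho)\ge 1$ (equivalently, build $A$ up one face at a time). If $k=0$ then $A$ is a disjoint union of vertices; being $(n-1)$-connected (with $n\ge 1$) forces $A$ to be a single vertex $v$, whence $A=\Delta(\{v\})$ and the conclusion is the hypothesis. (When $n=0$ the statement says: $f$ is a surjection iff $f^{-1}(\Delta(\rho))\ne\emptyset$ for all $\rho$, which is elementary; the case $n=\infty$ follows by taking the conjunction over all finite $n$, since $f^{-1}$ commutes with these conditions.) For the inductive step, write $A=A'\cup\Delta(\rho)$ where $\rho$ is a maximal face and $A'=A\setminus\{\rho\}$ — but one must be careful that $A'$ and $\Delta(\rho)$ and their intersection $A'\cap\Delta(\rho)=\partial\Delta(\rho)$ (the boundary sphere, if $\rho$ is not a whole ``free'' face) are themselves sufficiently connected; in general $A'$ need not be $(n-1)$-connected, so the naive induction breaks.

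Because of that last point, the cleaner route — and the one I would actually pursue — is to avoid inducting on $A$ directly and instead use a \emph{generalized nerve theorem} applied to the cover of $f^{-1}(A)$ by the subcomplexes $\{f^{-1}(\Delta(\rho)) : \rho \text{ a maximal face of } A\}$. One checks that the nerve of this cover is (a subdivision of, or homotopy equivalent to) $A$ itself, since $f$ is simplicial and the fibers $f^{-1}(\Delta(\rho))$ intersect exactly along $f^{-1}(\Delta(\rho)\cap\Delta(\rho'))=f^{-1}(\Delta(\rho\cap\rho'))$; more generally any multiple intersection is $f^{-1}$ of a simplex. The hypothesis guarantees every piece, and every such multiple intersection (being of the form $f^{-1}(\Delta(\tau))$ for a face $\tau$ of dimension $\le n$), is $(n-1)$-connected. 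The nerve theorem in its connectivity-sensitive form then yields that $f^{-1}(A)$ is $(n-1)$-connected if and only if its nerve $A$ is, which is exactly the claim. I expect the main obstacle to be assembling the correct statement of the nerve lemma — specifically verifying the homotopy-equivalence between the nerve of the fiber cover and $A$, and tracking that one only ever needs $(n-1)$-connectedness of intersections rather than full contractibility — together with the bookkeeping that all relevant faces $\tau$ have $\dim(\tau)\le n$, which holds since $A$ is $n$-dimensional. For the acyclic variant, the same argument runs with the Mayer--Vietoris spectral sequence of the cover in place of the nerve theorem, giving $\widetilde H_*(f^{-1}(A))$ in the range $*\le n-1$ in terms of $\widetilde H_*(A)$ via the $E_1$-page built from the $\widetilde H_*$ of the (acyclic) pieces and intersections.
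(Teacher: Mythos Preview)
Your approach via a quantitative nerve theorem (for the homotopy case) and the Mayer--Vietoris spectral sequence (for the homology case) is essentially the route the paper \emph{cites}---Quillen's poset fiber theorem \cite[Proposition~7.6]{Qu} for part~(1) and Bj\"orner--Wachs--Welker \cite[Corollary~4.3]{Bj} for part~(2)---but it is not the route the paper actually \emph{takes}. The paper instead sketches a self-contained chain-level argument for the acyclic case: it proves a mild generalization (Lemma~\ref{L:l-n-acyclicity}) by explicit manipulation of cycles and boundaries via two auxiliary lemmas, in effect unrolling your spectral sequence by hand. For part~(1) the paper does not treat the two statements as formally identical, as you do; it reduces (1) to (2) through the Hurewicz theorem together with a separate combinatorial treatment of $\pi_1$. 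Your approach is conceptually cleaner and handles both parts uniformly, at the price of importing a nontrivial nerve lemma; the paper's approach is elementary and self-contained but homology-specific.

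There is one genuine gap in your sketch. You write that ``all relevant faces $\tau$ have $\dim(\tau)\le n$, which holds since $A$ is $n$-dimensional,'' but the lemma carries no dimension hypothesis on $A$, and after your reduction to $S=A$ the target is an arbitrary $(n-1)$-connected subcomplex, which may well contain faces of dimension $>n$. For such a face $\rho$ arising as a multiple intersection in your cover, the hypothesis says nothing directly about $f^{-1}(\Delta(\rho))$, so the input to your nerve lemma is missing. This is repairable---e.g.\ by an outer induction on $\dim A$, or by first showing that the $(n-1)$-connectedness of $f^{-1}(\Delta(\rho))$ for $\dim\rho>n$ follows from that of the preimages of its proper faces---but it must be addressed. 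The paper's chain-level proof sidesteps the issue automatically, since an $(\le n{-}1)$-cycle in $B$ only involves simplices $\sigma$ with $\dim(f\sigma)\le n{-}1$.
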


Before we discuss the proof of Lemma  \ref{L:n-dim} we show how it implies Theorem \ref{T:n-dim}.

\begin{proof}[Proof of Theorem \ref{T:n-dim}.]
We just check here the projective amalgamation property. Fix $n$ and let $f\colon B\to A$ and $g\colon C\to A$ be maps in $\mathcal{C}_n$. We will	 define the projective amalgam $D,f',g'$ as the $n$--skeleton $\mathrm{Sk}^n(B\times_A C)$ of the simplicial pullback $B\times_A C$, together with the canonical projection maps $\pi_B,\pi_C$. Recall that the simplicial pullback $B\times_A C$ is defined on domain $\mathrm{dom}(B)\times_{\mathrm{dom}(A)}\mathrm{dom}(C)$ as the simplicial complex whose faces are precicely all sets of the form
\[\sigma\times_A\tau=\{(b,c)\colon b\in\sigma, \; c\in \tau, \; f \sigma=g \tau \}, \]
where $\sigma\in B$ and $\tau\in C$. We let $D$ be the simplicial complex  attained by $B\times_A C$ after we omit all faces of dimension strictly larger than $n$. Let  $f'=\pi_B$ and  $g'= \pi_C$ be the projection maps $(b,c)\mapsto b$ and $(b,c)\mapsto c$ from $D$ to $B$ and $C$ respectively. It is easy to check that both $f',g'$ are simplicial epimorphisms (surjective on faces). We now check that $f'\colon D\to B$ is $(n-1)$-connected.  The fact that $D$ is $(n-1)$-connected is a special case of this and the fact. The same argument applies symmetrically to $g'$. 

Let $B_0$ be a $(n-1)$-connected subcomplex of $B$. To show that $D_0=(f')^{-1}(B_0)$ is $(n-1)$-connected it suffices by Lemma \ref{L:n-dim}(1) to show that $(f')^{-1}(\Delta(\rho))$ is $(n-1)$-connected for every $\rho\in B$. Let $\tau$ be the image of $\rho$ under $f$ and let $\Delta(\tau)$ the corresponding simplex, that is a subcomplex of $A$. Let also $C_0=g^{-1}(\Delta_{\tau})$ and notice that, since $g\in\mathcal{C}_n$, $C_0$ is a $(n-1)$-connected subcomplex of $C$. Notice that $C_0$ is isomorphic to the subcomplex $K$ of $\mathrm{Sk}^n(\Delta(\tau) \times_{\Delta(\tau)} C_0)$ spanned by the vertexes in $\mathrm{graph}^{*}(g \res \mathrm{dom} (C_0) ):=\{(w,v)\in \tau \times\mathrm{dom}(C_0)  \colon g(v)=w\}$, where $\Delta(\tau)\times_{\Delta(\tau)} C_0$ is formed with respect to $\mathrm{id}\colon \Delta(\tau)\to \Delta(\tau)$ and $g\res \mathrm{dom} (C_0) \colon C_0\to \Delta(\tau)$. Now, again by Lemma \ref{L:n-dim}(1), it is easy to see that  the function $(f\res \rho) \times \mathrm{id}$ from $\mathrm{Sk}^n(\Delta(\rho) \times_{\Delta(\tau)} C_0)$ to $\mathrm{Sk}^n(\Delta(\tau) \times_{\Delta(\tau)} C_0)$ is $(n-1)$-connected. But $D_0$ is simply the preimage of $K$ under this map and $K$ is isomorphic to $C_0$ which is $(n-1)$-connected.
A similar argument, using Lemma \ref{L:n-dim}(2) instead of Lemma \ref{L:n-dim}(1), shows that $\widetilde{\mathcal{C}}_n$ satisfies the projective amalgamation property.
\end{proof}

Lemma \ref{L:n-dim} (1) and (2) are special cases of \cite[Proposition 7.6]{Qu} and \cite[Corollary 4.3]{Bj}, respectively. However, since we are dealing with finite combinatorial objects, one can provide a direct proof of Lemma \ref{L:n-dim}. In the rest of this section  we sketch the steps for a hands-on proof Lemma \ref{L:n-dim} (2). The interested reader can fill the missing details. For Lemma \ref{L:n-dim} (1) recall that, by the Hurewicz Theorem, a simplicial complex is $(n-1)$-connected  for $n\geq 2$, if it is $(n-1)$-acyclic and it has a trivial fundamental group.   A combinatorial proof of Lemma \ref{L:n-dim} (1) is now possible using the notions of \emph{combinatorial paths} and \emph{combinatorial homotopy} from  \cite{HW}. 

We now recall from \cite{Po} basic notions from homology  and the proceed to sketch a direct proof of Lemma \ref{L:n-dim} (2). Let $C$ be a simplicial complex and let $\sigma\in C$. An {\bf orientation} for $\sigma$ is an equivalence class  of expressions $\epsilon(v_0,\ldots,v_n)$, where $\sigma=\{v_0,\ldots,v_n\}$ and $\epsilon\in\{-1,1\}$. For $n=-1$ we have the empty listing. Two such expressions $\epsilon(v_0,\ldots,v_n)$ and $\epsilon'(v'_0,\ldots,v'_n)$ are equivalent if for the unique permutation $\pi$ with $v_i=v'_{\pi(i)}$, we have that $\mathrm{sgn}(\pi)=\epsilon\epsilon'$. There are precisely two orientations associated with each face. An {\bf oriented face} $\orient{\sigma}$ in $C$ is just an orientation for $\sigma$ with $\sigma\in C$.

The {\bf chain group} $\mathbb{C}(C)$ of a complex $C$ is the abelian group generated by oriented faces of $C$, with the relations $\orient{\sigma}+\orient{\tau}=0$, for any two distinct oriented faces $\orient{\sigma}$ and $\orient{\tau}$ with $\sigma=\tau$. Elements of $\mathbb{C}(C)$ are called {\bf chains}.
Each chain is uniquely represented as a finite sum $\sum_i \orient{\sigma}_i$, where each $\orient{\sigma}_i$ is 
an oriented face  and, for all $i,j$, if $\sigma_i = \sigma_j$, then $\orient{\sigma}_i=\orient{\sigma}_j$. We say that $\orient{\sigma}_i$ {\bf is in} the chain $\sum_i \orient{\sigma}_i$. The empty sum represents the identity element $0\in\mathbb{C}(C)$.  An {\bf $n$-chain} is a chain consisting entirely of $n$-dimensional oriented faces. A {\bf $(\leq n)$-chain} consists of oriented faces  whose dimension  is less that or equal to $n$.
The chain group is equipped with an endomorphism $\partial$ which is defined on the generators of
$\mathbb{C}(C)$ by the following procedure. If $\orient{\sigma}$ is one of the two $(-1)$-dimensional oriented faces, let $\partial \orient{\sigma} = 0$.
If $\orient{\sigma}$  is the equivalence class of $\epsilon (v_0, \dots, v_n)$ with  $n\geq 0$, let
\begin{equation}\label{E:boun}
\partial \orient{\sigma} = \sum_{i=0}^n \orient{\sigma}_i,
\end{equation}
where $\orient{\sigma}_i$ is the equivalence class of $(-1)^i\epsilon (v_0, \dots, v_{i-1}, v_{i+1}, \dots, v_n)$.  Let finally $f\colon B\to A$ be a simplicial map. This map induces a function
\[
f_{\#}\colon \mathbb{C}(B)\to \mathbb{C}(A)
\]
given by the following rules. Let $\orient{\sigma}$ be an oriented face
in $B$. If $f \sigma$ has dimension strictly smaller than that of $\sigma$, let $f_\#(\orient{\sigma})=0$. If the dimensions of
$f\sigma$ and $\sigma$ are equal and $\orient{\sigma}$ is the equivalence class of $\epsilon (v_0, \dots, v_n)$, define $f_\#(\orient{\sigma})$ to be the equivalence class of
$\epsilon (f(v_0), \dots, f(v_n))$. One checks that
\[
f_\#\circ \partial = \partial\circ f_\#.
\] 

We have now developed all homological prerequisites for the main definition.
\begin{definition} \label{D:Acyclic}
Let $n\geq (-1)$. A complex $C$ will be called {\bf $n$-acyclic} if for each $(\leq n)$-chain $\zeta$ with $\partial \zeta=0$ there is a chain $\eta$ with $\zeta = \partial \eta$.
\end{definition}

The non-trivial direction of Lemma \ref{L:n-dim} (2) reduces to the following more general statement whose proof relies on  Lemma \ref{L:aux1} and  Lemma \ref{L:aux2}

\begin{lemma}\label{L:l-n-acyclicity}
If $f\colon B\to A$ is a simplicial map between finite simplicial complexes and  for some  $l,n\in\mathbb{N}$ we have that:  
\begin{enumerate}
\item $f$ is still simplicial when viewed as a map from $\mathrm{Sk}^n(B)$ to $\mathrm{Sk}^l(A)$;
\item   $f^{-1}(\Delta(\sigma))$ is $(n-1)$-acyclic  for every $\sigma\in \mathrm{Sk}^l(A)$;
\end{enumerate}
then $B$ is $(n-1)$-acyclic if $A$ is $(l-1)$-acyclic. 
\end{lemma}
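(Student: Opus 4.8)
The plan is to run a spectral-sequence-style argument by hand, filtering chains in $B$ by the dimension of their images in $A$, but organized as a concrete ``chase'' since we are dealing with finite complexes. Suppose $\zeta$ is a $(\leq n-1)$-chain in $\mathbb{C}(B)$ with $\partial\zeta = 0$; we must produce $\eta$ with $\partial\eta = \zeta$. The idea is to peel off the ``vertical'' part of $\zeta$ (the part supported on fibers $f^{-1}(\Delta(\sigma))$ for fixed $\sigma$) using hypothesis (2), and to push the ``horizontal'' part down to $A$, where hypothesis ($A$ is $(l-1)$-acyclic, together with assumption (1) telling us $f_\#$ lands in $\mathrm{Sk}^l(A)$) lets us kill it.

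Concretely, first I would stratify: for each face $\sigma\in A$, let $F_\sigma = f^{-1}(\Delta(\sigma))$, and observe that these subcomplexes are nested along inclusions of simplices in $A$ and cover $B$. The technical engine will be an auxiliary ``local-to-global'' statement—presumably Lemma \ref{L:aux1} handles the fibers and Lemma \ref{L:aux2} handles the assembly—of the form: a cycle in $B$ can be written as a sum $\zeta = \zeta' + \partial\xi$ where $\zeta'$ is ``horizontal'' in the sense that $f_\#$ is injective on its support and $f_\#\zeta'$ is again a cycle in $\mathrm{Sk}^l(A)$, and $\xi$ is built fiberwise. The fiberwise construction of $\xi$ is exactly where $(n-1)$-acyclicity of each $f^{-1}(\Delta(\sigma))$ is consumed: within each fiber one solves $\partial(\text{something}) = (\text{vertical piece})$, and one does this for $\sigma$ ranging over skeleta of $A$ in increasing dimension, so that the corrections at stage $k$ do not disturb what was arranged at stages $<k$. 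Once $\zeta = \zeta' + \partial\xi$ with $f_\#\zeta'$ a cycle of dimension $\leq l-1$ in $A$, acyclicity of $A$ gives $\theta$ in $\mathbb{C}(A)$ with $\partial\theta = f_\#\zeta'$; lifting $\theta$ back along $f_\#$ (choosing for each oriented face in $\theta$ a preimage oriented face in $B$, possible since $f$ is a simplicial epimorphism) yields a chain $\tilde\theta$ in $B$ with $f_\#(\zeta' - \partial\tilde\theta) = 0$, i.e. $\zeta' - \partial\tilde\theta$ is itself vertical, and one more fiberwise application of $(n-1)$-acyclicity bounds it. Collecting, $\eta = \xi + \tilde\theta + (\text{the last vertical primitive})$ works.

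The main obstacle—and the reason the two auxiliary lemmas are split out—is the bookkeeping in the inductive peeling over $\mathrm{Sk}^k(A)$: one needs that after correcting by a chain supported over the $k$-skeleton, the residual cycle's image in $A$ genuinely has no component of dimension $\le k$, so that the process terminates at dimension $l$; and one needs the fiberwise primitives to be chosen compatibly on overlaps $F_\sigma\cap F_\tau = f^{-1}(\Delta(\sigma\cap\tau))$, which is where the nesting of the fibers and hypothesis (1) (forcing $f$ to collapse high-dimensional faces, so that the ``horizontal'' chain really does live in $\mathrm{Sk}^l(A)$) get used together. I expect Lemma \ref{L:aux1} to isolate the single-fiber statement (solving $\partial\eta=\zeta$ inside one $f^{-1}(\Delta(\sigma))$ when the cycle is supported there) and Lemma \ref{L:aux2} to package the inductive assembly across the skeleta of $A$; granting those, the proof above is a routine diagram chase and I would not belabor it.
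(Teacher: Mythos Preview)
Your overall strategy is correct and matches the paper's approach closely: both arguments filter by the image in $A$, use the $(l-1)$-acyclicity of $A$ to kill the ``horizontal'' part of the cycle after lifting a bounding chain along $f_\#$, and use the fiberwise $(n-1)$-acyclicity from hypothesis~(2) to bound the ``vertical'' residue. Two small corrections are worth noting.

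First, the division of labor between the auxiliary lemmas is not quite what you guessed. In the paper, Lemma~\ref{L:aux1} is not a single-fiber statement: it says that a cycle \emph{all of whose oriented faces strictly drop dimension under $f$} bounds (proved by induction on the maximal image-dimension, using the fiber acyclicity). Lemma~\ref{L:aux2} is narrower than ``inductive assembly'': it says that a pair $\orient{\sigma}+\orient{\tau}$ of oriented faces with $f_\#(\orient{\sigma})+f_\#(\orient{\tau})=0$ can be written as $\partial\epsilon+\gamma$ with $\gamma$ supported over $f^{-1}(\mathrm{Bd}(\Delta(\rho)))$. The paper also reverses your order: it first uses $(l-1)$-acyclicity of $A$ to arrange $f_\#(\zeta_B)=0$ (your $\tilde\theta$-step), then invokes Lemma~\ref{L:aux1} to reduce to a $(\leq l{-}1)$-chain, and finally applies Lemma~\ref{L:aux2} together with induction on $l$ to finish. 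Your order (vertical peeling first, then horizontal) works equally well.

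Second, a minor slip: you justify lifting $\theta$ by ``$f$ is a simplicial epimorphism,'' but the lemma does not assume this. The surjectivity you need---that every $\tau\in\mathrm{Sk}^l(A)$ is $f\sigma$ for some $\sigma\in B$ of the same dimension---follows instead from hypothesis~(2), since each $f^{-1}(\Delta(\tau))$ is $(n-1)$-acyclic (in particular nonempty, and one argues inductively on $\dim\tau$).
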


For any simplex $\Delta(\rho)$ on a set $\rho$ we define the {\bf boundary} $\mathrm{Bd}(\Delta(\rho))$ of $\Delta(\rho)$ to be the simplicial complex $\Delta(\rho)\setminus\{\rho\}$.

\begin{lemma}\label{L:aux1}
Let $f\colon B\to A$ be a simplicial map  such that $f^{-1}(\Delta(\sigma))$ is $n$-acyclic,  for every $\sigma \in A$. Let $\zeta$ be an $(\leq n)$-chain in $B$ such that 
each  $\orient{\sigma}$ in $\zeta$ we have that  $\mathrm{dim}(\sigma)>\mathrm{dim}(f\sigma)$.  If $\partial\zeta=0$, then there is a chain $\eta$ such that $\zeta=\partial\eta$.  
\end{lemma}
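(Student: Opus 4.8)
The plan is to prove Lemma~\ref{L:aux1} by induction on the number $|A|$ of faces of the target complex $A$, peeling off one maximal face at a time; the Mayer--Vietoris-type computation one would otherwise need in order to understand $f^{-1}(\mathrm{Bd}(\Delta(\rho)))$ gets folded into this induction. In the base case $A$ has no non-empty face; then for any $\orient{\sigma}$ in $\zeta$ the inequality $\mathrm{dim}(\sigma)>\mathrm{dim}(f\sigma)$ forces $\sigma\neq\emptyset$, hence $\emptyset\neq f\sigma\in A$, contradicting that $A$ has no non-empty face, so $\zeta=0$ and $\eta=0$ works.

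For the inductive step fix a maximal non-empty face $\rho$ of $A$, set $d=\mathrm{dim}(\rho)$, and let $c$ be the sub-chain of $\zeta$ consisting of the $\orient{\sigma}$ in $\zeta$ with $f\sigma=\rho$; every such $\sigma$ has $\mathrm{dim}(\sigma)\geq d+1$. Since $\rho$ is maximal, no $\orient{\sigma}$ in $\zeta$ maps onto a face strictly containing $\rho$, so in $\partial\zeta$ the only oriented faces mapping onto $\rho$ come from $\partial c$; as $\partial\zeta=0$ this forces those faces to cancel, so $z:=\partial c$ is a chain all of whose oriented faces map into $\mathrm{Bd}(\Delta(\rho))$. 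Using $(\ref{E:boun})$ one checks that $z$ is a $(\leq n)$-chain (its faces have dimensions in $[d,n-1]$), that $\partial z=0$, and that $z$ is dimension-dropping for the restricted simplicial map $f'\colon f^{-1}(\mathrm{Bd}(\Delta(\rho)))\to\mathrm{Bd}(\Delta(\rho))$, since a face $\sigma\setminus\{v\}$ of $z$ has dimension $\geq d$ while its image has dimension $\leq d-1$. This $f'$ still satisfies the hypothesis of the lemma, because $(f')^{-1}(\Delta(\tau))=f^{-1}(\Delta(\tau))$ is $n$-acyclic for every $\tau\in\mathrm{Bd}(\Delta(\rho))$, and $|\mathrm{Bd}(\Delta(\rho))|<|A|$; so by the inductive hypothesis $z=\partial\theta$ for some chain $\theta$ in $f^{-1}(\mathrm{Bd}(\Delta(\rho)))$.

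Now $c-\theta$ is a cycle, since $\partial(c-\theta)=z-z=0$, and it is a $(\leq n)$-chain lying in the $n$-acyclic complex $f^{-1}(\Delta(\rho))$; hence $c-\theta=\partial\mu$ for some chain $\mu$ in $f^{-1}(\Delta(\rho))$ by Definition~\ref{D:Acyclic}. Consider $\zeta':=\zeta-\partial\mu=(\zeta-c)+\theta$: all its oriented faces map into the subcomplex $A':=A\setminus\{\rho\}$ (those from $\zeta-c$ map onto faces $\neq\rho$, those from $\theta$ into $\mathrm{Bd}(\Delta(\rho))\subseteq A'$), it is again a cycle, and it is again dimension-dropping and a $(\leq n)$-chain (the $\theta$-part has faces of dimension $\geq d+1$ over images of dimension $\leq d-1$). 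Since $f|_{f^{-1}(A')}\colon f^{-1}(A')\to A'$ satisfies the hypothesis of the lemma and $|A'|=|A|-1<|A|$, the inductive hypothesis gives $\zeta'=\partial\eta'$; then $\zeta=\partial(\eta'+\mu)$, completing the induction.

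The routine parts are the orientation/coefficient bookkeeping hidden in phrases like ``the sub-chain of $\zeta$ over $\rho$'' and ``the oriented faces of $\partial\zeta$ mapping onto $\rho$'', together with the elementary dimension counts guaranteeing the auxiliary chains stay dimension-dropping and of dimension $\leq n$; all of this reads off directly from $(\ref{E:boun})$ and the inequality $\mathrm{dim}(\sigma)>\mathrm{dim}(f\sigma)$. The one point that really uses the hypothesis in an essential way is the vanishing of the relevant homology of $f^{-1}(\mathrm{Bd}(\Delta(\rho)))$; the trick that makes it painless is the observation that restricting $f$ over the subcomplex $\mathrm{Bd}(\Delta(\rho))$ (or over $A\setminus\{\rho\}$) leaves the preimages of simplices --- hence the hypothesis --- unchanged, so that this homological input is exactly a smaller instance of the lemma being proved.
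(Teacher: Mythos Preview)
Your proof is correct and follows the same strategy as the paper's sketch: isolate the part of $\zeta$ lying over a top face $\rho$, note that its boundary lies over $\mathrm{Bd}(\Delta(\rho))$, invoke the inductive hypothesis there, use $n$-acyclicity of $f^{-1}(\Delta(\rho))$ to bound the difference, and recurse on the remainder --- the only cosmetic difference is that you induct on $|A|$, peeling off one maximal face at a time, while the paper inducts on $l=\max\{\dim(f\sigma):\orient{\sigma}\text{ in }\zeta\}$ and handles all $l$-dimensional image faces simultaneously. One small point: the lemma as stated gives no control on the dimensions of the bounding chain, so your assertion that ``the $\theta$-part has faces of dimension $\geq d+1$'' requires the harmless extra step of discarding from $\theta$ all homogeneous components outside the range $[d+1,n]$ (this leaves $\partial\theta=z$ unchanged, since $\partial$ respects the grading and $z$ lives in degrees $[d,n-1]$); the paper's sketch glosses over the analogous point for its chain $\xi_\rho$.
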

\begin{proof}[Sketch of Proof.]
The proof is by induction on $l=\max\{\mathrm{dim}(f\sigma)\colon \orient{\sigma} \;\text{in}\; \zeta \}$. Let $\zeta=\sum_{\rho}\zeta_\rho+\zeta^{-}$, where $\rho$ varies over all $l$-dimensional faces of $A$ for which there is  a $\orient{\sigma}$ in $\zeta$ with $f\sigma=\rho$, and with $\zeta_{\rho}$ collecting all such $\orient{\sigma}$. Since 
\[0=\partial \zeta =\sum_{\rho}\partial \zeta_{\rho}+\partial\zeta^{-}\]
and each $\partial\zeta_{\rho}$ is a chain in $f^{-1}(\Delta(\rho))$ it follows actually that each $\partial\zeta_{\rho}$ is a chain in $f^{-1}(\mathrm{Bd}(\Delta(\rho)))$. By inductive hypothesis, and since $\partial\partial\zeta_{\rho}=0$, there exists a chain $\xi_{\rho}$ in $f^{-1}(\mathrm{Bd}(\Delta(\rho)))$ with $\partial\xi_{\rho}=\partial\zeta_{\rho}$. Since $f^{-1}(\Delta(\rho))$ is $n$-acyclic we get a chain $\eta_{\rho}$ in $f^{-1}(\Delta(\rho))$ with $\eta_{\rho}-\xi_{\rho}=\partial\eta_{\rho}$. We have that 
\[\zeta-\partial(\sum_{\rho}\xi_{\rho})=\sum_{\rho}\xi_{\rho}+\zeta^-.\]
Since $\sum_{\rho}\xi_{\rho}+\zeta^{-}$ is a $(\leq n)$-chain in $f^{-1}(\mathrm{Sk}^{l-1}(A))$ with $\partial(\sum_{\rho}\xi_{\rho}+\zeta^{-})=\partial \zeta -\partial\partial(\sum_{\rho}\xi_{\rho})=0$  we have, by inductive hypothesis, a chain $\eta^{-}$ with $\partial\eta^{-}= \sum_{\rho}\xi_{\rho}+\zeta^{-}$. Set $\eta=\sum_{\rho}\eta_{\rho}+\eta^{-}$.
\end{proof}

\begin{lemma}\label{L:aux2}
Let $f\colon B\to A$ be simplicial such that $f^{-1}(\Delta(\sigma))$ is $l$-acyclic  for every $\sigma \in A$. Let $\orient{\sigma}$ and $\orient{\tau}$ be oriented faces of $B$ with $f_{\#}(\orient{\sigma})=f_{\#}(\orient{\tau})=\orient{\rho}$. If $\orient{\sigma}$, $\orient{\tau}$, $\orient{\rho}$ have dimension $l$ and  
$f_\#(\orient{\sigma}) + f_\#(\orient{\tau})=0$, then there is an $l+1$-chain $\epsilon$ in $f^{-1}(\Delta(\rho))$ and an $l$-chain $\gamma$ in $ f^{-1}(\mathrm{Bd}(\Delta(\rho)))$ so that
\[
\orient{\sigma} + \orient{\tau} = \partial \epsilon + \gamma.
\]
\end{lemma}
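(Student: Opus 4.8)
I would prove this lemma by induction on $l$, feeding the induction through the facets of $\sigma$ and $\tau$ and using Lemma~\ref{L:aux1} to absorb an error term. \emph{Unwinding the hypothesis.} Since $\orient{\sigma}$, $\orient{\tau}$, $\orient{\rho}$ all have dimension $l$ and $f_\#(\orient{\sigma})$, $f_\#(\orient{\tau})$ are nonzero, $f$ restricts to bijections $\sigma\to\rho$ and $\tau\to\rho$ with opposite induced orientations; so, relabelling, take $\orient{\sigma}=(a_0,\dots,a_l)$, $\orient{\rho}=(w_0,\dots,w_l)$ with $f(a_i)=w_i$ and $f_\#(\orient{\sigma})=\orient{\rho}$, and $\orient{\tau}=-(b_0,\dots,b_l)$ with $f(b_i)=w_i$ and $f_\#(\orient{\tau})=-\orient{\rho}$. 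Writing $\partial\orient{\sigma}=\sum_i\orient{\sigma}_i$ and $\partial\orient{\tau}=\sum_i\orient{\tau}_i$ as in \eqref{E:boun}, the faces underlying $\orient{\sigma}_i$ and $\orient{\tau}_i$ both map bijectively (hence without collapsing) onto the facet $\rho_i:=\rho\setminus\{w_i\}$, and $f_\#(\orient{\sigma}_i)=-f_\#(\orient{\tau}_i)$, both being $\pm$ the $i$-th summand of $\partial\orient{\rho}$. Thus each pair $\orient{\sigma}_i,\orient{\tau}_i$ is again in the situation of the lemma, one dimension down.

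\emph{Base case.} For $l=0$ the complex $\mathrm{Bd}(\Delta(\rho))$ is just the empty face, so $\gamma$ is forced to be $0$, and the claim reduces to: $a-b$ is a boundary in $f^{-1}(\Delta(\{w\}))$, which holds because that complex is $0$-acyclic, hence connected and nonempty.

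\emph{Inductive step.} Apply the lemma in dimension $l-1$ to each pair $\orient{\sigma}_i,\orient{\tau}_i$ (permissible since $l$-acyclicity of every $f^{-1}(\Delta(\sigma))$ entails $(l-1)$-acyclicity): this gives an $l$-chain $\epsilon_i$ in $f^{-1}(\Delta(\rho_i))$ and an $(l-1)$-chain $\gamma_i$ in $f^{-1}(\mathrm{Bd}(\Delta(\rho_i)))$ with $\orient{\sigma}_i+\orient{\tau}_i=\partial\epsilon_i+\gamma_i$. Because $\rho_i\subsetneq\rho$ forces $\Delta(\rho_i)\subseteq\mathrm{Bd}(\Delta(\rho))$, the chains $E:=\sum_i\epsilon_i$ and $\Gamma:=\sum_i\gamma_i$ both lie in $f^{-1}(\mathrm{Bd}(\Delta(\rho)))$, with $\dim E=l$, $\dim\Gamma=l-1$, and $\partial\orient{\sigma}+\partial\orient{\tau}=\partial E+\Gamma$. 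Applying $\partial$ gives $\partial\Gamma=0$; and every face in $\Gamma$ has dimension $l-1$ yet maps into some $\mathrm{Bd}(\Delta(\rho_i))$, a complex of dimension $\le l-2$, so it is collapsed by $f$. Hence $\Gamma$ is precisely the sort of chain Lemma~\ref{L:aux1} handles: applied to the restriction of $f$ over $\mathrm{Bd}(\Delta(\rho))$ with $n=l-1$, it yields a chain in $f^{-1}(\mathrm{Bd}(\Delta(\rho)))$ with boundary $\Gamma$, whose degree-$l$ part $H$ then satisfies $\partial H=\Gamma$. Setting $\gamma:=E+H$, an $l$-chain in $f^{-1}(\mathrm{Bd}(\Delta(\rho)))$, we get $\partial(\orient{\sigma}+\orient{\tau}-\gamma)=(\partial E+\Gamma)-\partial E-\Gamma=0$, so $\orient{\sigma}+\orient{\tau}-\gamma$ is an $l$-dimensional cycle lying in $f^{-1}(\Delta(\rho))$; since that complex is $l$-acyclic (the case $\sigma=\rho$ of the hypothesis), it bounds a chain there, and the degree-$(l+1)$ part $\epsilon$ of that chain gives $\orient{\sigma}+\orient{\tau}=\partial\epsilon+\gamma$.

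\emph{Main obstacle.} The orientation and sign bookkeeping, together with the discipline of checking that each chain produced sits inside the intended subcomplex ($f^{-1}(\Delta(\rho))$ versus the smaller $f^{-1}(\mathrm{Bd}(\Delta(\rho)))$), is fiddly but routine. The genuinely load-bearing point is the observation that the accumulated correction term $\Gamma$, although generally nonzero, is a \emph{cycle built from collapsed faces} and therefore falls under Lemma~\ref{L:aux1}. This is exactly what allows one to avoid requiring $f^{-1}(\mathrm{Bd}(\Delta(\rho)))$ to be $(l-1)$-acyclic --- which it need not be, as $\mathrm{Bd}(\Delta(\rho))$ is an $(l-1)$-sphere.
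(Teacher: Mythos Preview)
Your proof is correct and follows essentially the same route as the paper's sketch: induct on $l$, apply the inductive hypothesis to the facet pairs $(\orient{\sigma}_i,\orient{\tau}_i)$, invoke Lemma~\ref{L:aux1} to bound the accumulated correction $\Gamma=\sum_i\gamma_i$, and finish using $l$-acyclicity of $f^{-1}(\Delta(\rho))$. Your $E$, $H$, and $\gamma=E+H$ correspond exactly to the paper's $\sum_\nu\epsilon_\nu$, $\gamma$, and $\sum_\nu\epsilon_\nu+\gamma$; you have simply filled in the base case and the verification that Lemma~\ref{L:aux1} applies over $\mathrm{Bd}(\Delta(\rho))$, which the paper leaves to the reader.
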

\begin{proof}[Sketch of Proof.]
The proof is by induction on $l$.  By \eqref{E:boun}, we have that 
\[
\partial \orient{\sigma} = \sum_{\nu} \orient{\sigma}_{\nu}\;\hbox{ and }\;\partial \orient{\tau} = \sum_{\nu} \orient{\tau}_{\nu}, 
\]
where $\nu$ varies over all $(l-1)$-dimensional faces with $\nu\subseteq\rho$ and   $f(\sigma_{\nu}) = f(\tau_{\nu})=\nu$. It follows that $
f_\#(\sigma_{\nu}) + f_\#(\tau_{\nu}) =0$ and therefore, by inductive assumption, we have that
$\sigma_{\nu}+\tau_{\nu}=\partial\epsilon_{\nu}+ \gamma_{\nu}$, for an $l$-chain $\epsilon_{\nu}$ in $f^{-1}(\Delta(\nu))$ and  an $l-1$-chain $\gamma_{\nu}$ in $f^{-1}(\mathrm{Bd}(\Delta(\nu)))$. One can check now that Lemma~\ref{L:aux2} applies to the chain $\sum_{\nu} \gamma_{\nu}$, producing an $l$-chain $\gamma$ in $f^{-1}(\mathrm{Bd}(\Delta(\rho)))$ with $\sum_{\nu} \gamma_{\nu} =\partial\gamma$. Since
$\partial (\orient{\sigma}+\orient{\tau}- (\sum_{\nu}\epsilon_{\nu} +\gamma))=0$ and
 $f^{-1}(\Delta(\rho))$ is $l$-acyclic, there exists an $l+1$-chain $\epsilon$ in $f^{-1}(\Delta(\rho))$ such that 
\[
\orient{\sigma}+\orient{\tau}- (\sum_{\nu}\epsilon_{\nu} +\gamma) = \partial\epsilon. 
\]
It follows that $\orient{\sigma}+\orient{\tau} = \partial\epsilon + (\sum_{\nu}\epsilon_{\nu} +\gamma)$,  where $\sum_{\nu}\epsilon_{\nu} +\gamma$ is an $l$-chain in $f^{-1}(\mathrm{Bd}(\Delta(\rho)))$, as required.
\end{proof}

\begin{proof}[Proof Sketch of Lemma \ref{L:l-n-acyclicity}.]
Assume without loss of generality that $l\leq n$ and notice that (2) implies that for every $\tau\in \mathrm{Sk}^l(A)$, there is $\sigma\in A$, with $f\sigma=\tau$. 
Let $\zeta_B$ be a $(\leq n-1)$-chain in $B$ with $\partial \zeta_B$. We will find a chain $\eta_B$ with $\partial \eta_B=\zeta_B$.
\begin{claim}
We can assume without loss of generality  that $f_{\#}(\zeta_B)=0$.
\end{claim}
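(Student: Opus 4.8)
The plan is to reduce to the case of vanishing pushforward by subtracting a suitable boundary. Since $f_{\#}$ commutes with $\partial$, the chain $\omega:=f_{\#}(\zeta_B)$ is a cycle in $A$, because $\partial\omega=f_{\#}(\partial\zeta_B)=0$. As $f_{\#}$ kills every oriented face on which $f$ lowers dimension, the oriented faces surviving in $\omega$ have the same dimension as their images, and by hypothesis (1) those images lie in $\mathrm{Sk}^{l}(A)$; since $\zeta_B$ is a $(\leq n-1)$-chain, this makes $\omega$ a cycle supported in dimensions $\leq\min(l,n-1)$. When $l=n$ this is a $(\leq l-1)$-cycle and the argument proceeds as below; for general $l\leq n$ one must first absorb the possible $l$-dimensional part of $\zeta_B$ — which by $\partial\zeta_B=0$ is itself a cycle, and which can be handled by an induction on the top dimension occurring in $\zeta_B$, using that the fiber hypothesis gives $f^{-1}(\Delta(\rho))$ even $l$-acyclic when $n-1\geq l$ — so that in all cases we may assume $\omega$ is a $(\leq l-1)$-cycle of $A$.

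Granting that, I would use the hypothesis that $A$ is $(l-1)$-acyclic to write $\omega=\partial\xi$, truncating $\xi$ to a $(\leq l)$-chain of $A$. I would then lift $\xi$ through $f_{\#}$: each oriented face $\orient{\rho}$ in $\xi$ has $\mathrm{dim}(\rho)\leq l$, hence $\rho\in\mathrm{Sk}^{l}(A)$, so by the consequence of hypothesis (2) recorded above there is a face $\sigma\in B$ with $f\sigma=\rho$; orienting $\sigma$ compatibly gives $f_{\#}(\orient{\sigma})=\orient{\rho}$, and summing these choices over the faces of $\xi$ produces a chain $\chi$ in $B$ with $f_{\#}(\chi)=\xi$. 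Each lifted face has dimension $\leq l\leq n$, so $\chi$ lies in $\mathrm{Sk}^{n}(B)$ and $\partial\chi$ is a $(\leq n-1)$-chain.

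Finally, I would replace $\zeta_B$ by $\zeta_B':=\zeta_B-\partial\chi$. This is again a $(\leq n-1)$-cycle; it bounds in $B$ precisely when $\zeta_B$ does, since they differ by a boundary; and $f_{\#}(\zeta_B')=\omega-\partial f_{\#}(\chi)=\omega-\partial\xi=0$, which is exactly the reduction claimed. The main obstacle is the bookkeeping just indicated: one must keep the lift $\chi$ inside $\mathrm{Sk}^{n}(B)$ so that subtracting $\partial\chi$ does not leave the class of $(\leq n-1)$-chains, and — in the case $l<n$ — one must first dispose of an $l$-dimensional cycle potentially carried by $f_{\#}(\zeta_B)$ that the $(l-1)$-acyclicity of $A$ alone cannot kill; both points are dealt with by the induction on top dimension together with the extra strength of the fiber hypothesis.
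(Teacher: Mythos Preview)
Your argument is essentially the paper's: push $\zeta_B$ forward to a cycle $\omega=f_\#(\zeta_B)$ in $A$, use the $(l-1)$-acyclicity of $A$ to write $\omega=\partial\xi$, lift $\xi$ face-by-face to a chain $\chi$ in $B$ (using that every $\tau\in\mathrm{Sk}^l(A)$ is hit by some $\sigma\in B$ of the same dimension), and replace $\zeta_B$ by $\zeta_B-\partial\chi$. The paper does exactly this, with $\eta,\eta'$ in place of your $\xi,\chi$.

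One small slip in your aside on the case $l<n$: you speak of ``the $l$-dimensional part of $\zeta_B$'' being a cycle, but $\zeta_B$ may carry faces of dimension up to $n-1>l$, whose boundaries hit dimension $l$; what is a cycle is the $l$-dimensional part of $\omega=f_\#(\zeta_B)$, since $\omega$ is a $(\leq l)$-cycle. The paper's sketch simply does not raise this bookkeeping point at all (it applies $(l-1)$-acyclicity directly), so your extra caution here, even if imperfectly worded, goes beyond what the paper spells out rather than falling short of it.
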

\begin{proof}[Proof of claim]
Set $\zeta=f_{\#}(\zeta_B)$. Since $A$ is $(l-1)$-acyclic, we find a $(\leq l)$-chain $\eta$ in $A$ with $\partial \eta=\zeta$. Set $\eta=\sum_i\orient{\tau}_i$.  Since $\tau_i\in\mathrm{Sk}^l(A)$, we can find a chain $\eta'=\sum_i\orient{\sigma}_i$ in $B$, with $\mathrm{dim}(\sigma_i)=\mathrm{dim}(\tau_i)$ and $f_{\#}(\orient{\sigma}_i)=\orient{\tau}_i$. One can now replace $\zeta_B$ with $\zeta_B-\partial\eta'$ which satisfies all the desired properties. Moreover, if $\zeta_B-\partial\eta'=\partial \eta''$ for some cycle $\eta''$, then $\zeta_B=\partial(\eta'+\eta'')$. 
\end{proof}
By Lemma~\ref{L:aux1} we can further assume that $\zeta_B$ is in fact a $(\leq l-1)$-chain. As a consequence, 
\[
\zeta_B =  \sum_i (\orient{\sigma}_i+\orient{\tau}_i) + \zeta', 
\]
where $f_\#(\orient{\sigma}_i)+f_\#(\orient{\tau}_i)=0$,  $\mathrm{dim}(f\sigma_i)=\mathrm{dim}(\sigma_i)=\mathrm{dim}(\tau_i)=\mathrm{dim}(f\tau_i)=l-1$, and 
$\zeta'$ is an $(\leq l-2)$-chain.  Let $\rho_i=f\sigma_i=f\tau_i$. By 
 Lemma~\ref{L:aux2}, for each $i$, there is a chain $\epsilon_i$ and a chain $\gamma_i$ in $ f^{-1}(\mathrm{Bd}(\Delta(\rho_i)))$ such that $\orient{\sigma}_i+\orient{\tau}_i = \partial\epsilon_i + \gamma_i$. 
Thus, 
\[\zeta_B = \partial (\sum_i \epsilon_i) + (\sum_i \gamma_i + \zeta').\]   
One checks now that $f_{\#}(\sum_i \gamma_i + \zeta')$ is a chain in $\mathrm{Sk}^{l-1}(A)$ and the above equation implies that $\partial(\sum_i \gamma_i + \zeta')=0$. By inductive assumption we can find $\eta$ with $\partial\eta =(\sum_i \gamma_i + \zeta')$ and set $\eta_B=(\sum_i \epsilon_i)+\eta$ to be the required chain.
\end{proof}

As in Section \ref{S: Menger curve}, we can now construct generic sequences for $\mathcal{C}_n$ and $\widetilde{\mathcal{C}}_n$ whose inverse limits we denote by $\mathbb{M}^n$ and $\widetilde{\mathbb{M}}^n$ respectively. Both $\mathbb{M}^n$ and $\widetilde{\mathbb{M}}^n$ are compact  $n$-dimensional simplicial complexes and as in Theorem \ref{T: Menger is Menger} it is easy to see that the relation $R$, where $xRy$ iff there is a face $\sigma$ with $x,y\in\sigma$, is an equivalence relation. We let $|\mathbb{M}^n|=\mathbb{M}^n/R$ and $\widetilde{\mathbb{M}}^n=\widetilde{\mathbb{M}}^n/R$. It follows that $|\mathbb{M}^0|$ and $|\widetilde{\mathbb{M}}^0|$ are both homeomorphic to the Cantor space $2^{\mathbb{N}}$; both $|\mathbb{M}^1|$ and $|\widetilde{\mathbb{M}}^1|$ are homeomorphic to the Menger curve $|\mathbb{M}|$; and as in Theorem \ref{T: Menger is Menger} one can see that both $|\mathbb{M}^n|$ and $\widetilde{\mathbb{M}}^n$ are Peano continua. While one expects $|\mathbb{M}^n|$ to be the usual Menger compactum of dimension $n$ (see \cite{Be}), we observe that for $n>1$, the complex $\widetilde{\mathbb{M}}^n$ admits quotients $A\in \widetilde{\mathcal{C}}_n$ which are $(n-1)$-acyclic but not $(n-1)$-connected. To the best of our knowledge these ``homology Menger spaces'', and for $n=\infty$ this  ``homology Hilbert cube,'' have not appeared elsewhere in the literature.

\end{document}